\newcommand{\R}{\mathbb{R}}
\newcommand{\C}{\mathbb{C}}
\newcommand{\Z}{\mathbb{Z}}
\newcommand{\F}{\mathbb{F}}
\renewcommand{\H}{\mathbb{H}}
\newcommand{\TT}{\mathcal{T}}
\newcommand{\BB}{\mathcal{B}}
\newcommand{\PP}{\mathcal{P}}
\newcommand{\GG}{\mathcal{G}}
\newcommand{\RR}{\mathcal{R}}
\newcommand{\CP}{\mathbb{C}{\rm P}}
\newcommand{\ML}{\mathcal{ML}}
\newcommand{\E}{\mathcal{E}}
\newcommand{\QF}{\mathcal{QF}}
\newcommand{\LL}{\mathcal{L}}
\newcommand{\PSL}{\mathrm{PSL}}
\newcommand{\SL}{\mathrm{SL}}
\newcommand{\QD}{\mathrm{QD}}
\newcommand{\MCG}{\mathrm{MCG}}
\newcommand{\PML}{\mathrm{PML}}
\newcommand{\PMF}{\mathrm{PMF}}
\newcommand{\Hol}{\operatorname{Hol}}
\newcommand{\Ad}{\operatorname{Ad}}
\renewcommand{\Im}{\operatorname{Im}}
\newcommand{\Area}{\operatorname{Area}}
\newcommand{\tr}{\operatorname{tr}}
\newcommand{\length}{\operatorname{length}}
\newcommand{\Isom}{\operatorname{Isom}}
\newcommand{\ep}{\epsilon}
\newcommand{\del}{\delta}
\newcommand{\gam}{\gamma}
\newcommand{\Gam}{\Gamma}
\newcommand{\Lam}{\Lambda}
\newcommand{\lam}{\lambda}
\newtheorem{lemma}{Lemma}[section]
\newtheorem{claim}[lemma]{Claim}
\newtheorem{proposition}[lemma]{Proposition}
\newtheorem{theorem}[lemma]{Theorem}
\newtheorem{question}[lemma]{Question}
\newtheorem{remark}[lemma]{Remark}
\newtheorem{corollary}[lemma]{Corollary}
\newtheorem{definition}[lemma]{Definition}
\newtheorem{thm}[]{Theorem}
\newcommand{\col}{\colon}
\newcommand{\sub}{\subset}
\newcommand{\minus}{\setminus}
\newcommand{\ti}{\tilde}
\newcommand{\til}{\tilde}
\newcommand{\bdr}{\partial}
\DeclareRobustCommand{\rchi}{{\mathpalette\irchi\relax}}
\newcommand{\irchi}[2]{\raisebox{\depth}{$#1\chi$}} 
 \newcommand{\Label}[1]{\label{#1}\textcolor{green}{\tiny #1} }
\renewcommand{\Label}[1]{\label{#1}}
\def\squigred{\bgroup \markoverwith{\textcolor{red}{\lower3.5\p@\hbox{\sixly \char58}}}\ULon}
\def\squigblue{\bgroup \markoverwith{\textcolor{blue}{\lower3.5\p@\hbox{\sixly \char58}}}\ULon}
\definecolor{dblue}{cmyk}{1,.5, 0,.1}
\definecolor{arsenic}{rgb}{0.23, 0.27, 0.49}
\newcommand{\note}[1]{\marginpar{\textcolor{dblue}{\tiny #1}}}
 \definecolor{calpolypomonagreen}{rgb}{0.12, 0.3, 0.17}
  \definecolor{darkbyzantium}{rgb}{0.6, 0.3, 0.4}
 \definecolor{azure}{rgb}{0.0, 0.5, 1.0}
 \definecolor{cittingcolor}{cmyk}{60,0,10,0}
\NewDocumentCommand{\dslash}{O{}}
 {
  \str_case:nn { #1 }
   {
    {}{/\mkern-6mu/}
    {\big}{\big/\mkern-7mu\big/}
    {\Big}{\Big/\mkern-10mu\Big/}
    {\bigg}{\bigg/\mkern-14mu\bigg/}
    {\Bigg}{\Bigg/\mkern-18mu\Bigg/}
   }
 }
\newcommand{\Qed}[1]{\nopagebreak[4]{\tiny \hfill
\fbox{\ref{#1}} \linebreak }\pagebreak[2]}
\tikzset{
  symbol/.style={
    draw=none,
    every to/.append style={
      edge node={node [sloped, allow upside down, auto=false]{$#1$}}}
  }
}
\newcommand{\function}[5]{%
  \begin{tikzcd}[
    column sep=2em,
    row sep=1ex,
    ampersand replacement=\&
  ]
  #1\colon \&[-3em]
  #2\vphantom{#3} \arrow[r] \&
  #3\vphantom{#2} \\
  \&
  #4\vphantom{#5} \arrow[u,symbol=\in] \arrow[r,mapsto] \&
  #5\vphantom{#4} \arrow[u,symbol=\in]
  \end{tikzcd}%
}
\def\uwave{\bgroup \markoverwith{\lower3.5\p@\hbox{\sixly \textcolor{blue}{\char58}}}\ULon}
\font\sixly=lasy6 
\begin{document}

 \title[\today]{Bending Teichm\"uller spaces and character varieties}

\author{Shinpei Baba}
\address{University of Osaka}

\email{sb.sci@osaka-u.ac.jp}

\maketitle
\begin{abstract}  
   We consider the mapping $b_L\col\TT \to \rchi$ from the Fricke-Teichmüller space $\TT$ 
  into the $\PSL_2\C$-character variety $\rchi$ of the surface, obtained by bending Fuchsian representations along a fixed measured lamination $L$.
   We prove that this mapping is an equivariant symplectic real-analytic embedding, and, for almost all measured laminations, proper.

 We also show that this ``bending map''  $b_L\col \TT \to \rchi$ extends continuously almost-everywhere to the canonical inclusion map from the Thurston boundary of $\TT$ into the Morgan-Shalen boundary of $\rchi$.
 
Moreover we ``complexify" this bending map in a geometric manner.
 Namely, we symplectically embed this real-analytic subvariety $\Im b_L$ into the product variety $\rchi \times \rchi$ by the diagonal mapping twisted by complex conjugation.  
Then we construct a closed $\C$-symplectic complex-analytic subvariety of $\rchi \times \rchi$ containing  $\Im b_L$ as a half-dimensional real-analytic subvariety.

   \end{abstract}

\setcounter{tocdepth}{1}

\tableofcontents

\section{Introduction}
Thurston discovered the bent hyperbolic surfaces $\tau$ on the boundary of the convex core of a (geometrically finite) hyperbolic three-dimensional manifold (\cite{Thurston-78}). 
The intrinsic metric of the convex surface is hyperbolic, and the surface is bent along a measured lamination, where the bending angles correspond to the transversal measure of the lamination. 
Such bent surfaces are particularly useful for capturing the global properties of the hyperbolic manifold. 

Lifting the convex surface $\tau$ to the universal cover $\H^3$ of the hyperbolic three-manifold,  we obtain an equivariant bending $\H^2 \to \H^3$ which preserves the (intrinsic) hyperbolic metric of the surface. 
Then, this bending map is equivariant via a holonomy representation of a surface group into $\PSL_2\C$.
Moreover, if $\tau$ is $\pi_1$-injective (equivalently, incompressible) in the ambient hyperbolic 3-manifold, then the bending map $\H^2 \to \H^3$ is a proper embedding.

In this paper, we utilize this bending construction in a new generalized manner and construct similar equivariant geometry-preserving mappings, in fact, at the level of associated deformation spaces. 

\subsection{Holonomy varieties}
Let $Y$ be a marked Riemann surface structure on a closed oriented surface $S$ of genus $g$ at least two.
Let  $\QD(Y)$ denote the space of the holomorphic quadratic differentials on $Y$, which is a complex vector space of dimension $3g-3$. 
Then $\QD(Y)$ is identified with the space $\PP_Y$ of all $\CP^1$-structures on $Y$, and this correspondence yields the {\it Schwarzian parameterization} of $\CP^1$-structures (see \cite{Dumas-08} for example).

Let $$\Hol\col \PP \to \rchi$$ be the holonomy map from the deformation space $\PP$ of all $\CP^1$-structures on $S$ to the $\PSL_2\C$-character variety $\rchi$ of $S$.
Recall that the character variety $\rchi$ is an affine algebraic variety.
 Its smooth part has Goldman's complex symplectic structure invariant under the action of the mapping class group; see  \cite{Goldman84SymplecticNatureOfFundamentalGroupsOfSurfaces}.
Many interesting properties of this mapping, associated with the Schwarzian parametrization,  have been discovered, and in particular the following holds.
\begin{theorem}
\label{PoincareHolonomyVariety}
The restriction of the holonomy map to $\PP_Y \cong \QD(Y)$ is a proper Lagrangian complex-analytic embedding into $\rchi$. 
\end{theorem}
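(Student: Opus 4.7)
\emph{Proof plan.} The approach is to verify four properties of the holonomy map $\Hol \colon \QD(Y) \cong \PP_Y \to \rchi$: complex analyticity, injectivity of its differential, global injectivity, and properness. Together these yield a proper complex-analytic embedding.

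For complex analyticity, fix a reference $\CP^1$-structure on $Y$ (for instance the Fuchsian uniformization), so that each $q \in \QD(Y)$ corresponds via the Schwarzian derivative to a unique $\sigma_q \in \PP_Y$ whose developing map $f_q \colon \tilde Y \to \CP^1$ is a ratio of two linearly independent solutions of the linear ODE $u'' + \tfrac{1}{2} q u = 0$. Since these solutions, and hence their monodromy, depend holomorphically on $q$, so does the holonomy class in $\rchi$. The differential $d\Hol$ is injective at every $\sigma_q$ by a classical computation relating infinitesimal variations of $q$ to deformations of $\rho_q$ in $\rchi$ (Earle, Hubbard).

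For global injectivity, suppose $\sigma_{q_1}, \sigma_{q_2}$ have conjugate holonomies. After conjugation, their developing maps $f_1, f_2 \colon \tilde Y \to \CP^1$ are equivariant under a common $\rho$, so the locally defined map $g := f_1 \circ f_2^{-1}$ commutes with every $\rho(\gamma)$. When $\rho$ has Zariski-dense image, the graph of $g$ in $\CP^1 \times \CP^1$ has Zariski closure invariant under the diagonal $\PSL_2\C$-action; since the only $1$-dimensional $\PSL_2\C$-invariant closed subvariety is the diagonal, $g = \mathrm{id}$, giving $f_1 = f_2$ and $\sigma_{q_1} = \sigma_{q_2}$. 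The remaining elementary cases are treated by direct inspection of the reducible strata in $\rchi$.

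The main obstacle is properness: I must show that $\|q_n\| \to \infty$ in $\QD(Y)$ forces $\rho_n := \Hol(\sigma_{q_n})$ out of every compact subset of $\rchi$. The plan is to apply WKB-type asymptotics to the ODE $u'' + \tfrac{1}{2} q_n u = 0$ along a well-chosen simple closed curve $\gamma$ on $Y$: in regions where $|q_n|$ is large, solutions grow or oscillate at a rate controlled by $\int_\gamma \sqrt{q_n}$, and for a suitably chosen $\gamma$ this quantity is unbounded, forcing the trace of $\rho_n(\gamma)$ to diverge. Hence $\rho_n$ escapes every compact subset of $\rchi$, and combining all four steps yields the claim.
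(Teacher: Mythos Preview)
The paper does not prove this theorem itself: it is quoted as a classical result, with injectivity attributed to Poincar\'e and properness to Kapovich (full details in Gallo--Kapovich--Marden; see also Dumas and Tanigawa). So there is no in-paper argument to compare against, only the cited references.

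Your outline is broadly consistent with those references. The analyticity and immersion steps are standard and correctly sketched. Your injectivity argument---via the Zariski closure of the image of $(f_1,f_2)\colon \tilde Y\to\CP^1\times\CP^1$ under the diagonal $\PSL_2\C$-action---is clean and valid once one notes that the holonomy of a $\CP^1$-structure on a closed surface of genus $\ge 2$ is always non-elementary, hence Zariski dense in $\PSL_2\C$; in particular the ``elementary cases'' you allow for do not actually arise and need no separate treatment. Your properness sketch via WKB asymptotics for $u'' + \tfrac12 q_n u = 0$ is closest in spirit to Dumas's approach among the cited sources; Kapovich's original argument proceeds along different lines. One caution: the step ``for a suitably chosen $\gamma$ the quantity $\int_\gamma\sqrt{q_n}$ is unbounded, forcing $\tr\rho_n(\gamma)$ to diverge'' hides most of the real difficulty---one must control WKB error terms uniformly (including near the zeros of $q_n$) and rule out cancellation in the relevant period. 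The plan is correct, but that step is where the substantive work lies.
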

The injectivity of \Cref{PoincareHolonomyVariety} is due to Poincaré \cite{Poincare884}; 
the properness is due to Kapovich \cite{Kapovich-95} (see \cite{Gallo-Kapovich-Marden} for the full proof; see also  \cite{Dumas18HolonomyLimit, Tanigawa99});
the Lagrangian property is proven by Kawai \cite{Kawai96SymplecticNature}. 
On the other hand, the entire holonomy map $\Hol\col \PP \to \rchi$ of $\CP^1$-structures is neither injective nor proper (see \cite{Hejhal-75}).

 By \Cref{PoincareHolonomyVariety}, for every marked Riemann surface structure $Y$, the vector space $\QD(Y) \cong \C^{3g-3}$ is properly embedded onto a half-dimensional smooth subvariety of $\rchi$.
 We call this image, associated with the Schwarzian parametrization, the {\sf Poincaré holonomy variety} of $Y$. 
In particular, the holonomy variety of $Y$ contains the Bers slice of $Y$ as a bounded pseudo-convex domain. 

   The Morgan-Shalen compactification of the character variety $\rchi$ consists of certain $\pi_1(S)$-actions of metric trees (\cite{Culler-Shalen-83, MorganShalen84}). 
   Dumas investigated the asymptotic behavior of the proper mapping $\Hol \vert \PP(X)$.  
   Namely, she showed that  $\Hol \vert \PP(X)$ extends to the ray compactification of the vector space $\QD(X)$ almost everywhere in a natural manner.

\begin{theorem}[Corollary E in \cite{Dumas18HolonomyLimit}]\Label{DumasHolonomyLimit}
Let  $q \in \QD(X) \minus \{0\}$ be a generic direction.
Let $V$ be the vertical measured foliation of $q$, and let $\ti{V}$ be the pull-back measured foliation of $V$ to the universal cover $\ti{X}$.
Then  $\Hol(t q)$ converges to the $\pi_1(S)$-action on the metric tree dual to $\ti{V}$ as $t \to \infty$. 

Moreover, $\Hol | \PP_X$ continuously extends the full measure set of the ray-compactification boundary $\bdr \QD(X)$ to the mapping to the Morgan-Shalen boundary of $\rchi$ in a natural manner.
\end{theorem}

\subsection{Real bending varieties}
Recall that $\CP^1$ is the ideal boundary of the hyperbolic three-space $\H^3$, and the automorphism group $\PSL_2\C$ of $\CP^1$ is identified with the group of orientation-preserving isometries of $\H^3$.
Utilizing this correspondence in a sophisticated manner, Thurston gave another parametrization of $\PP$, so that $\CP^1$-structures correspond to equivariant pleated surfaces in $\H^3$ (\S \ref{sRealBendingMap}). 
In this paper, we first yield an analogue of \Cref{PoincareHolonomyVariety} by specific slices in the Thurston parametrization of $\CP^1$-structures.

In fact, Tanigawa \cite{Tanigawa-97}, Wolf-Scannell \cite{ScannellWolf02GraftingMapOfTeichmullerSpace}, Dumas-Wolf \cite{Dumas-Wolf08} considered the $\CP^1$-structures with a fixed bending measured lamination and analyzed their conformal structures. 
In this paper, as in the holonomy variety,  we instead consider the holonomy representations of such $\CP^1$-structures.

For a measured lamination $L$ on a hyperbolic surface $\tau$,  we obtain an equivariant pleated surface in $\H^3$ by bending the universal cover of $\tau$, the hyperbolic plane $\H^2$, along the inverse-image $\ti{L}$ of $L$ in $\H^2$, and the pleated surface $\ti\tau \cong \H^2 \to \H^3$ is equivariant via a representation $\pi_1(S) \to \PSL_2\C$. (See \S \ref{sBendingDeformation} for details.)
Let $\TT$ be the space of marked hyperbolic structures on $S$, the Fricke-Teichmüller space; then $\TT$ is diffeomorphic to $\R^{6g-6}$ as a smooth manifold.
The Weil-Petersson form gives a symplectic structure on $\TT$, and Goldman extended it to a complex symplectic structure on the smooth part of $\rchi$ (\cite{Goldman84SymplecticNatureOfFundamentalGroupsOfSurfaces}).
For a measured lamination $L$ on $S$, let $b_L\col \TT \to \rchi$ be the map taking $\tau \in \TT$ to the holonomy representation $\pi_1(S) \to \PSL_2\C$ of the pleated surface given by $\tau$ and $L$.

This mapping is closely related to the Thurston parametrization of $\PP$ (\Cref{ThurstonParametrization}), and
the following theorem is an analogue of \Cref{PoincareHolonomyVariety} in the Thurston parametrization.
\begin{thm}[Theorems \ref{BendingTeich}, \ref{RealSymplectic}, Lemma \ref{RealBendingEquivariant}]\Label{BendingTeichmullerSpaces}
Let $L$ be an arbitrary measured lamination on $S$. 
Then, the bending map $b_L\col \TT \to \rchi$ is a real-analytic symplectic embedding, and it is equivariant by the subgroup of the mapping class group $\GG_L$ of $S$ preserving $L$.  

Moreover,  $b_L$ is proper if and only if $L$ contains no periodic leaf of weight $\pi$ modulo $2\pi$. 
\end{thm}

This preservation of the symplectic structure of $\TT$ by $b_L$ resembles the preservation of the (intrinsic) hyperbolic metric by the bending map $\H^2 \to \H^3$, and the equivariant property is also analogous.
Moreover, by Theorem \ref{BendingTeichmullerSpaces},  the real bending map $b_L$ is a proper mapping for almost all measured laminations $L$.
In addition, for exceptional laminations, we explicitly characterize the non-properness in the Fenchel-Nielsen coordinates (\Cref{NonpropernessOfBendingTeichmullerSpaces}). 

 Depending on $L \in \ML$, the stabilizer $\GG_L$ can be a large subgroup and, on the other hand, it can be the trivial subgroup of the mapping class group $\MCG$ (\Cref{rStabilizer}).

 We next consider the asymptotic behavior of $b_L\col \TT \to \rchi$.
  Namely, we give an analogue of \Cref{DumasHolonomyLimit} for the real bending map $b_L$.
  Recall that the Thurston boundary of the Teichmüller space is canonically embedded in the Morgan-Shalen boundary (see \cite[\S11.16]{Kapovich-01}). 
 In this paper,    the ``boundary map'' of $b_L$ is the identity for almost all the points.    
\begin{thm}\Label{BoundaryMap}
Let $V \in \PMF = \bdr_{th}\TT$ be a measured foliation such that every singular leaf is a tripod, i.e. a union of three rays with a common endpoint. 
For every $L \in \ML$ and every sequence $\tau_i \in \TT$ converging to $V$, the bent representation $b_L(\tau_i)$ converges to the $\pi_1(S)$-action on the dual metric tree of $\tilde{V}$ as $i \to \infty$.
(Theorem \ref{GenericConvergenceToThurstonBoundary}.)
\end{thm}
Note that a full-measure set of measured foliations satisfies the assumption that every singular leaf is a tripod.

\subsection{Complex bending varieties}\Label{sComplexBendingVarieties}

   Historically, a real-analytic deformation determined by a measured lamination or a measured foliation (an equivalent object) often has a significant complexification: 
 A Teichmüller geodesic in the Teichmüller space $\TT$ is determined by a measured foliation on a Riemann surface, and its complexification is a Teichmüller disk in $\TT$.
A measured lamination on a hyperbolic surface yields a real-analytic earthquake line in $\TT$ (\cite{Thurston86, Kerckhoff85Earthquakes}), and an earthquake disk is its complexification (\cite{McMullen98ComplexEarthquakesTeichmullerTheory}). 
    
We aim to geometrically complexify the real-analytic embedding $b_L \col \TT \to \rchi$ in \Cref{BendingTeichmullerSpaces}, and obtain a complex-analytic mapping from a closed complex-analytic variety.
It is plausible that such complexifications of the real bending varieties $\Im b_L$ in a common analytic space will lead us to discover intersecting properties of the original real-analytic varieties.

   We first explain the domain of the complexified bending map. 
 Given a representation $\rho\col \pi_1(S) \to \PSL_2\C$, if a holonomy $\rho(\ell) \in \PSL_2\C$ along a loop $\ell$ is either hyperbolic or elliptic,  then one can certainly bend $\rho$ along $\ell$ as the axis of $\rho(\ell)$ gives the axis of bending deformation.
 However, it is {\it not} clear at all if one can define bending if $\rho(\ell)$ is parabolic or the identity.

Therefore, given a weighted multiloop $M$ on $S$, we introduce an appropriate closed analytic set $X_M$ consisting of certain (double) framed representations, so that the framing determines the bending axes even when the holonomy along some loops of $M$ is trivial (\S \ref{sFramedCharacterVarieties}). 
In fact, this modification of $\rchi$ essentially occurs only in a complex-analytic subvariety of $\rchi$ disjoint from $\TT$: Namely, when specific subvarieties are removed from $X_M$ and $\rchi$,  the map forgetting the framing induces a finite-to-one holomorphic covering map from $X_M$ to $\rchi$ (see \S \ref{sForgetfulMap}).
In particular, there is a canonical embedding of the Fricke-Teichmüller space $\TT$ into $X_M$ as a real-analytic smooth subvariety. 
In addition, we can pull back the complex symplectic structure on $\rchi$ to $X_M$ minus a subvariety.
 
We next explain the target space. 
Notice that the Fricke-Teichmüller space  $\TT$ is a component of the real slice of the character variety $\rchi$. 
 Moreover, the real bending map $b_L\col \TT \to \rchi$ is in the complex affine variety $\rchi$ (i.e. its tangent spaces contain no complex lines).
Therefore, it is necessary to enlarge the ambient space in order to obtain nontrivial and different complexifications for different bending laminations.

 When the $\PSL_2\C$ Lie algebra $\mathfrak{psl}_2\C$ is regarded as a real Lie algebra, its complexification is isomorphic to $\mathfrak{psl}_2\C \oplus (\mathfrak{psl}_2\C)^\ast$, where $\ast$ denotes complex conjugation.
 Thus,  for a representation $\rho\col \pi_1(S) \to \PSL_2\C$,  we consider the diagonal representation $\pi_1(S) \to \PSL_2\C \times \PSL_2\C$ twisted by conjugation of matrix entries, defined by $\gam \mapsto (\rho(\gam), \rho(\gam)^{\ast})$.
Then, given a representation framed along loops of $M$,  we can appropriately bend it along the axes determined by their framings, where the bending happens in the space of representations into $\PSL_2\C \times \PSL_2\C$. 
Then we obtain the {\sf complex bending map} $B_M \col X_M \to \rchi \times \rchi$. (See \S \ref{sComplexifiedBending} for details.) 
 Let 
     $$\Delta^\ast = \{ (\rho_1, \rho_2) \col \pi_1(S) \to \rchi \times \rchi \mid \rho_1 = \rho_2^\ast\},$$
     the anti-holomorphic diagonal in $\rchi \times \rchi$.
     Define $\psi\col \rchi \to \Delta^\ast \sub \rchi \times \rchi$  by $\rho \mapsto (\rho, \rho^\ast).$
     Let $\omega$ be Goldman's complex symplectic structure on $\rchi$. 
     Then, the average of pull-back complex symplectic structures   $\frac{1}{2} ({\rm pr}_1^\ast \omega + {\rm pr}_2^\ast \omega )$ is a complex symplectic structure on $\rchi \times \rchi$, where ${\rm pr_1}$ and ${\rm pr_2}$ are projections $\rchi \times \rchi \to \rchi$ to the first factor and the second factor, respectively. 
     Then the diagonal embedding $\rchi \to \rchi \times \rchi$ preserves the $\C$-symplectic structure.

   Each hyperbolic surface $\tau \in \TT$ corresponds to a discrete faithful representation $\pi_1(S) \to \PSL_2,\R$ whose image consists of hyperbolic elements except the identity. 
   Choose the orientation of each loop $m$ of $M$ ({\it oriented multiloop}).
   Then, the hyperbolic element $\rho(m)$ has a unique repelling fixed point and attracting fixed point on $\CP^1$, and we have a canonical embedding $\iota_M\col \TT \to X_M$ by adding the information of the fixed points.
  
 \begin{thm}[complexified bending maps along multiloops]\Label{Complexification}
Let $M$ be a weighted oriented multiloop on $S$, i.e. a measured lamination only with periodic leaves. 
Then  $B_M \col X_M \to \rchi \times \rchi$ is a complex-analytic mapping, such that 
\begin{enumerate}
\item the restriction of $B_M$ of $\TT$ is a real-analytic embedding into $\Delta^\ast$;
\item  $\psi \circ b_M \col \TT \to \rchi \times \rchi$ coincides with $B_M \circ \iota_M$ to $\TT$ (\Cref{fMultiloopComplexificationDiagram});
\item $B_M$ is complex symplectic in the complement of a proper subvariety of $X_M$; 
\item $B_M$ is equivariant by the action of the subgroup of the mapping class group preserving $M$. 
\end{enumerate}
\end{thm}
\begin{figure}
\begin{tikzcd}
X_M \arrow{r}{B_M}\arrow{r} & \rchi \times \rchi\\
\TT  \arrow{r}{b_M}  \arrow[u, hook, "\iota_M"
]  & \rchi
 \arrow[u, hook, "\psi"
] 
  \end{tikzcd}
 \caption{The commutative diagram describing the complexification $B_M$ of the real-analytic bending map $b_M$.}\label{fMultiloopComplexificationDiagram}
\end{figure}

(The complex-analyticity is proven in \Cref{BendingIsAnalytic}.
 For (1), see \Cref{ComplexifyingTwistedDiagonal}.  
 For (2), see Proposition \ref{ComplexifyingTwistedDiagonal}.
 For (3), see \Cref{Symp};
 For (4), see \Cref{EquivariantCBending}.) 
The removed subvariety in (3) consists of framed representations such that at least one loop of $M$ has trivial holonomy. 

Moreover, the properness of \Cref{BendingTeichmullerSpaces} is also carried over to complexified bending maps for a dense subset of $\ML$.
\begin{thm}\Label{BendingCharacterVarieties}
If $\ell$ is a non-separating oriented loop with weight not equal to $\pi$ modulo $2\pi$,
then, the bending map 
$B_\ell\col X_{\ell} \to \rchi \times \rchi$ is a proper mapping. (\Cref{ProperCase}.)
\end{thm}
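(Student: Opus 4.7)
The plan is to use the HNN presentation provided by the non-separating loop, and then to invert the complexified bending map explicitly on a dense open set by recognizing the bending element from the image. Write $\pi_1(S) = \pi_1(S_0) \ast_{\alpha = t\alpha t^{-1}} \langle t \rangle$, where $S_0$ is $S$ cut along $\ell$, $\alpha$ is identified with $\ell$, and $t$ is a loop crossing $\ell$ once. Let $\theta$ denote the given weight. From the construction of $B_\ell$, the image pair $(\sigma_1, \sigma_2) = B_\ell(\rho, F)$ is characterized by $\sigma_1|_{\pi_1(S_0)} = \sigma_2|_{\pi_1(S_0)} = \rho|_{\pi_1(S_0)}$ together with $\sigma_1(t) = E_{\theta/2} \rho(t)$ and $\sigma_2(t) = E_{-\theta/2} \rho(t)$ for compatible lifts in $\mathrm{Hom}(\pi_1(S), \PSL_2\C)$, where $E_{\theta/2}$ is the rotation by $\theta/2$ about the oriented axis prescribed by the framing $F$. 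Consequently $\sigma_1(t)\sigma_2(t)^{-1} = E_\theta$ reads off the full bending rotation, and the hypothesis $\theta \not\equiv \pi \pmod{2\pi}$ guarantees that $E_\theta$ is not an involution.

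Now suppose $(\sigma_1^n, \sigma_2^n) := B_\ell(\rho^n, F^n)$ converges to $(\sigma_1, \sigma_2)$ in $\rchi \times \rchi$; I must extract a convergent subsequence of $(\rho^n, F^n)$ in $X_\ell$. Since $X_\ell$ is a quotient of representations by a diagonal $\PSL_2\C$-conjugation, I may choose a common lift so that $\sigma_1^n \to \sigma_1$ in $\mathrm{Hom}(\pi_1(S), \PSL_2\C)$. Then $\rho^n|_{\pi_1(S_0)} = \sigma_1^n|_{\pi_1(S_0)}$ converges, and so does $\sigma_2^n|_{\pi_1(S_0)}$. On the dense open locus where the limit restriction is irreducible, its centralizer in $\PSL_2\C$ is trivial; combined with $[\sigma_2^n] \to [\sigma_2]$ in $\rchi$, this upgrades to convergence of $\sigma_2^n$ itself in $\mathrm{Hom}$. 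Thus $E_\theta^n := \sigma_1^n(t)\sigma_2^n(t)^{-1}$ converges to a limit $E_\theta^\infty$, which still has $\tr = \pm 2\cos(\theta/2)$ and, because $\theta \not\equiv \pi \pmod{2\pi}$, is a non-involutive rotation uniquely specified by an oriented axis. That axis is the limit of the framings $F^n$, and in turn $\rho^n(t) = (E_{\theta/2}^n)^{-1}\sigma_1^n(t)$ converges, giving $(\rho^n, F^n) \to (\rho, F)$ in $X_\ell$ as desired.

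The main obstacle is the non-generic locus where $\sigma_1|_{\pi_1(S_0)}$ degenerates to a reducible representation in the limit: the centralizer argument for lifting $\sigma_2^n$ breaks down, and an unbounded conjugation could in principle carry $\sigma_2^n$ to infinity in $\mathrm{Hom}$. I would handle this through a direct trace-function analysis on $\rchi \times \rchi$: for a finite family of words $w \in \pi_1(S)$ crossing $\ell$ an odd number of times, the convergent traces $\tr \sigma_i^n(w)$ constrain the bending axis to a compact region of the space of oriented geodesics in $\H^3$ and pin down $\rho^n(t)$ up to subsequence. The non-separating hypothesis is essential because it yields the clean HNN presentation, so that bending acts by a single left multiplication by $E_{\pm\theta/2}$; and the condition $\theta \not\equiv \pi \pmod{2\pi}$ is precisely what prevents $E_\theta^\infty$ from being an involution, which is the same obstruction to properness appearing in the real case in \Cref{BendingTeichmullerSpaces}.
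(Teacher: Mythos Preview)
Your strategy differs from the paper's, and the gap you yourself flag at the reducible locus is genuine and not closed by what you write.

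The paper works entirely with trace functions on the quotients, so no lifting to $\mathrm{Hom}$ is ever needed and no irreducibility hypothesis enters. The key lemma is a rank-two statement: for $\theta\not\equiv\pi\pmod{2\pi}$ and $e$ a fixed elliptic of angle $\theta$, the map
\[
E_\theta\sslash\PSL_2\C\;\longrightarrow\;\C^2,\qquad (\gamma,e)\;\longmapsto\;(\tr^2\gamma,\ \tr^2\gamma e)
\]
is proper. This follows from the $\SL_2$ trace identity $\tr(\gamma e)+\tr(\gamma e^{-1})=\tr\gamma\cdot\tr e$ together with $\tr e\neq 0$, which is exactly where the weight hypothesis is used. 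One then picks a generating set $\gamma_1,\dots,\gamma_n$ of $\pi_1(S)$ with each $\gamma_k$ crossing $\ell$ exactly once (this is where non-separating is used), normalises so the bending elliptic $e$ is fixed, observes that a divergent $\eta_i\in X_\ell$ forces some $(\rho_i(\gamma_k),e)$ to diverge in $E_\theta\sslash\PSL_2\C$, and concludes via the lemma that the traces of $B_\ell(\eta_i)(\gamma_k)$ in the two factors diverge. No case split on reducibility is required.

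Your inversion argument, by contrast, needs to lift convergence in $\rchi\times\rchi$ to convergence of actual representatives in $\mathrm{Hom}$ that still satisfy the bending relations, and this is exactly what fails when the restriction to $\pi_1(S_0)$ (or indeed the full $\sigma_1$) is reducible: the polystable representative of the limit need not arise from bending, and the centraliser can be positive-dimensional, so $\sigma_2^n$ is only pinned down up to an unbounded conjugation. Your proposed remedy---``a direct trace-function analysis on $\rchi\times\rchi$ for words crossing $\ell$ an odd number of times''---is in fact the paper's entire argument; once you carry it out via the trace identity above, the HNN inversion becomes superfluous.
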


Therefore, under the assumption of \Cref{BendingCharacterVarieties},
the image of  $B_\ell$  is a closed analytic subvariety in $\rchi \times \rchi$ ({\sf complex bending variety}).
Thus, via $\psi$, $\Im b_\ell$ is properly embedded in the real-analytic subvariety of the closed analytic set $\Im B_\ell$,  and $\psi$ preserves the $\R$-symplectic structure of $\Im b_\ell$.

On the other hand,
the complex bending map $B_M$ is {\it not} proper or injective in general (see \Cref{NoninjectiveNonproper}).
 However, $B_M$ is injective and proper ``almost everywhere'':
If an analytic subset is removed from the domain $X_M$ and a subvariety is removed from the target $\rchi \times \rchi$, then $B_M$ becomes injective and proper (\Cref{InjectivityAlomstEverywhere}, \Cref{PropernessAwayFromSubvariety}).

Next, we consider this complexification of a general bending map $b_L$. 
A {\it quasi-Fuchsian representation} $\pi_1(S) \to \PSL_2\C$ is a discrete faithful representation such that the limit set of its image $\Im \rho$ is a Jordan curve in $\CP^1$. 
The set $\QF$ of quasi-Fuchsian representations is called the {\it quasi-Fuchsian space}, and its real slice is the Teichmüller space $\TT$. 
It is straightforward to similarly define the complexified bending map $B_L$  on the quasi-Fuchsian space $\QF$ in $\rchi$, 
 since, for every quasi-Fuchsian representation, every geodesic lamination is realized by the pleating locus of an equivariant pleated surface. 

\begin{thm}\Label{GeneralComplexBendingVarietiy}
For every measured lamination $L$ on $S$, let $\ell_i$ be a sequence of non-separating weighted loops converging to $L$ as $i \to \infty$. 
Then, up to a subsequence,  the closed $\C$-analytic set $\Im B_{\ell_i}$  converges to a closed $\C$-analytic set in $\rchi \times \rchi$ as $i \to \infty$ which is $\C$-symplectic on the smooth part.

Moreover,  the closed $\C$-analytic set $\lim_{i \to \infty} \Im B_{\ell_i}$ contains a unique irreducible connected component $\BB_L$ containing $B_L (\QF)$, such that  $B_L = \psi \circ b_L$ on $\TT$.
(\S \ref{sComplexBendingVariety}.)
\end{thm}

\begin{figure}

\begin{tikzpicture}[
  every node/.style={inner sep=5pt},
  arrow/.style={->},
    hookarrow/.style={{Hooks[right]}->},
]

\node (A) at (0, 0) {$\QF$};
\node (B) at (2, 0) {$\BB_L$};
\node (E) at (3.1, 0) {$\sub \rchi \times \rchi$};
\node (C) at (0, -1.5) {$\TT$};
\node (D) at (2, -1.5) {$\rchi$};

\draw[arrow] (A) -- (B) node[midway, above] {\small$B_L$};
\draw[hookarrow ] (C) -- (A) node[midway, left] {};
\draw[hookarrow] (D) -- (B) node[midway, right] {\small $\psi$};
\draw[arrow] (C) -- (D) node[midway, above] {\small $b_L$};

\end{tikzpicture}

 \caption{The commutative diagram describing the complexification of $\Im b_L$.}\label{fComplexificationDiagram}
\end{figure}

\subsection{Outline of the paper}
In \S \ref{sPreliminaries}, we explain some basic notions used in this paper.  
In particular, we recall that a measured lamination on a hyperbolic surface induces an equivariant locally convex pleated surface $\H^2 \to \H^3$, then we define the real bending map $b_L \col \TT \to \rchi$ for a measured lamination.
In \S \ref{sRealBendingMapIsInjective}, we show the injectivity of the real bending map. 
In \S \ref{sRealBendingMapIsProper}, we prove the properness of the real bending map for most of the measured laminations $L$.
On the other hand, in Theorem  \ref{NonpropernessOfBendingTeichmullerSpaces},  we characterize the non-properness of the bending map.
In \S \ref{GenericConvergenceToThurstonBoundary},  we prove \Cref{BoundaryMap}.

In \S \ref{sFramedCharacterVarieties}, we introduce the space of representations double-framed along a weighted multiloop $M$  on $S$ (the framed character variety $X_M$). 
Then, in \S \ref{sComplexifiedBending}, we define the complex bending map from the framed character variety $\rchi_M$ to the product character variety $\rchi \times \rchi$. 
For the definition, a more general type of bending deformation is introduced.
In fact, when a representation framed along $M$ is bent along $M$, accordingly, the hyperbolic space $\H^3$ is equivariantly ``bent'' inside the $\H^3 \times \H^3$ (\S\ref{sSupportSpaces}).  
In \S \ref{sInjectivity}, we show that the complex bending map is injective almost everywhere.
In \S \ref{sAlmostProperComplexBendingMap}, we show that the complex bending map is proper almost everywhere. 
 In \S \ref{sAnalyticity},  using the ``almost-everywhere" injectivity, we prove the analyticity of the complex bending map on the entire domain. 
In \S \ref{sCoplexification}, we show that the complex bending map is a complexification of the real bending map. 
In \S \ref{sPropernessForNonseparatingLoop}, we prove that the complex bending map is, indeed, genuinely a proper mapping when $M$ is a single non-separating loop of the weight  {\it not} equal to $\pi$. 
In \S \ref{sSymplectic}, we show that the real bending map is symplectic and the complex bending map is complex symplectic. 
In \S \ref{sComplexBendingVariety}, we prove \Cref{GeneralComplexBendingVarietiy}.

\section{Acknowledgements}
I first thank Misha Kapovich, Bill Goldman, Hisashi Kasuya, and James Farre for their helpful conversations. 
I deeply appreciate the anonymous referees for their valuable comments, which significantly improved this paper.  In particular, Theorem B and Theorem E are proven in response to the referees' inspiring comments.
I thank Kento Sakai for telling me about harmonic maps, and lastly thank Tengren Zhang for his comments, with which I could simplify the definition of complex bending maps. 
The author was supported by the Grant-in-Aid for Scientific Research 20K03610 and 24K06737.

\section{Preliminaries}\Label{sPreliminaries}
\subsection{Bending deformation}\Label{sBendingDeformation}  (\cite{Thurston-78}, \cite{Epstein-Marden-87}.)
Thurston discovered that the boundary of the convex core of a hyperbolic three-manifold is a hyperbolic surface bent along a measured lamination (\cite{Thurston-78}). 
More generally, one can bend a hyperbolic surface along an arbitrary measured lamination and obtain a holonomy representation from the surface fundamental group into $\PSL_2\C$ as follows. 
 
 We shall first describe basic bending maps when the bending locus is a single loop.
 Let $\tau$ be a hyperbolic structure on $S$, and let $\ell$ be a geodesic loop on $\tau$ with weight $w > 0$. 
 The union $\ti{\ell}$ of all lifts of $\ell$ to the universal cover $\H^2$ of $\tau$ is a set of disjoint geodesics, each with weight $w$, and it is invariant under the deck transformation. 
We call the union $\ti\ell$ the {\sf total lift} of $\ell$. 

Put the universal cover $\H^2$ in the three-dimensional hyperbolic space $\H^3$ as a totally geodesic hyperbolic plane. 
By this embedding, the isometric deck transformations of $\H^2$  extend to an isometric action on  $\H^3$, and we obtain a representation of $\rho_\tau\col \pi_1(S) \to \PSL_2 \C$.
Note that, as $S$ is oriented, the orientation of the universal cover $\H^2$ determines a normal direction of the plane. 
Thus we can bend $\H^2$ along every geodesic $\alpha$ of $\ti{\ell}$ by angle $w$ so that the normal direction is in the exterior. 
Thus we obtain a bending map $\beta\col \H^2 \to \H^3$, which is totally geodesic on every complement of $\H^2 \minus \ti{\ell}$. 
The map $\beta$ is unique up to an orientation-preserving isometry of $\H^3$. 
Moreover, $\beta$ is equivariant by its holonomy representation $\rho\col \pi_1(S) \to \PSL_2\C$. 
 This $\rho$ is called the bending deformation of $\rho_\tau$ of $L$. 
 
 If $C_1, C_2$ are components of $\H^2 \minus \ti\ell$ such that $C_1, C_2$ are adjacent along a geodesic $\alpha$ of $\ti\ell$.
 Let $G_1$ and $G_2$ be the subgroups of $\pi_1(S)$ which preserve $C_1$ and $C_2$, respectively. 
 If $\beta$ is normalized so that $\beta_\tau = \beta$ on $C_1$, then the restriction of $\beta$ to $G_2$ is the conjugation of the restriction of $\rho_\tau$ to $G_2$ by the elliptic isometry with the axis $\alpha$ by angle $w$. 

More generally, given an arbitrary measured lamination $L$ on $\tau$, we can take a sequence of weighted loops $\ell_i$ converging to $L$ as $i \to \infty$. 
For each $i$, let $\rho_i\col \pi_1(S) \to \PSL_2\C$ be the bending deformation of $\rho_\tau$ along $\ell_i$. 
Then $\rho_i$ converges to a representation $\pi_1(S) \to \PSL_2\C$ as $i \to \infty$ if $\rho_i$ are appropriately normalized by $\PSL_2\C$.
This limit is the bending deformation of $\rho_\tau$ along $L$, and it is unique up to conjugation by an element of $\PSL_2\C$.

   \subsubsection{Equivariant property of the real bending map} \Label{sRealBendingMap}

The equivariant property of $b_L\col \pi_1(S) \to \PSL_2\C$ in \Cref{BendingTeichmullerSpaces} can directly be proven from the definition of the bending map. 
Here, we show this property in a broader context. 

 A {\sf $\CP^1$-structure} on $S$ is a $(\CP^1, \PSL_2\C)$-structure.
That is, an atlas of charts mapping open subsets of $S$ into $\CP^1$ with translation maps in $Aut(\CP^1) = \PSL_2\C$.  (General references about $\CP^1$-structures are\cite{Dumas-08, Kapovich-01,  Goldman22GeometricStructuresOnManifolds}).
Recall that $\CP^1$ is the ideal boundary of the hyperbolic space $\H^3$, and $\PSL_2\C$ is the group of orientation-preserving isometries of $\H^3$. 
Using equivariant bending maps described above, 
Thurston gave a parametrization of the deformation space $\PP$ of $\CP^1$-structures by corresponding them with holonomy-equivariant pleated surfaces in $\H^3$. 
\begin{theorem}[Thurston, \cite{Kulkani-Pinkall-94, Kamishima-Tan-92}]\Label{ThurstonParametrization}
The following canonical identification holds by  a  (tangential) homeomorphism, 
$$\PP  = \TT \times \ML.$$
\end{theorem}
Then $b_L(\tau) = \Hol (\tau, L)$ where $(\tau, L) \in \TT \times  \ML$ denote the $\CP^1$-structure in Thurston coordinates. 

\begin{lemma}\Label{RealBendingEquivariant}
For $L \in \ML$, let $\GG_L$ be the subgroup of $\MCG$ which preserves $L$. 
Then, the real bending map
$b_L\col \TT \to \rchi$ is $\GG_L$-equivariant. 
\end{lemma}
\begin{remark}\Label{rStabilizer}
If $L$ is a multiloop, then $\GG_L$ contains the subgroup of $\MCG$ generated by Dehn twists along loops not intersecting $L$ (but including the loops of $L$).
On the other hand, for almost all $L$ in $\ML$, $\GG_L$ is the trivial group, since $\MCG$ is a countable group. 
\end{remark}
\begin{proof}
The  $\MCG$-action on $\PP$ is given by marking change and on $\rchi $ by precomposing induces isomorphisms $\pi_1(S) \to \pi_1(S)$. 
Then the holonomy map $\Hol \col \PP \to \rchi$ is ${\rm MCG}$-equivariant (see, for example, \cite{Goldman06}). 

By Thurston's parametrization, 
for all $\tau \in \TT$ and $h \in \MCG$,  $h(\tau, L) = (\tau, L)$.
$$h \cdot b_L (\tau) =  h \cdot \Hol (\tau, L) = \Hol (h, L) = b_L (h \tau).$$
Thus the desired equivariant property holds. 
\end{proof}

\subsection{Quasi-geodesics in the hyperbolic space} 
We first recall the definition of quasi-isometries.
Let $(X, d_X), (Y, d_Y)$ be metric spaces, where $d_X, d_Y$ are the distance functions. 
Then, for $P > 1, Q > 0$,  a mapping $f\col X \to Y$ is a $(P, Q)$-{\it quasi-isometry} if, 
for all $x_1, x_2 \in X$, $$P^{-1} d_X(x_1, x_2) - Q <  d_Y(f(x_1), f(x_2)) < P\, d_X(x_1, x_2) + Q.$$ 

In this section, we discuss some conditions for a piecewise geodesic curve in $\H^3$ to be a quasi-geodesic.

\subsubsection{Quasi-geodesics in $\H^3$}
Let $c$ be a bi-infinite piecewise geodesic curve in $\H^3$. 
Let $s_i \,(i \in \Z)$ be the geodesic segments of $c$ indexed along $c$, so that $s_i$ and $s_{i +1}$ are adjacent geodesic segments for every $i \in \Z$ and $c = \cup_{i \in \Z} s_i$.
\begin{lemma}\Label{AlmostGeodesicPieacewiseGeodesic}
For every $\ep > 0$, there are (large) $R > 0$ and (small) $\del > 0$, such that if $\length s_i > R$ for all $i \in \Z$ and the (interior) angle between arbitrary adjacent geodesic segment $s_i, s_{i+ 1}$  is at least $\pi - \del$, then $c$ is a $(1 + \ep)$-biLipschitz embedding. 
\end{lemma}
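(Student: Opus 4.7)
The upper bound $d_{\H^3}(c(t),c(s)) \le |t-s|$ is automatic from the arc-length parameterization, so the task reduces to establishing the lower bound $d_{\H^3}(c(t),c(s)) \ge |t-s|/(1+\ep)$. My plan is a two-step local-to-global argument, combining a sharp hyperbolic trigonometric estimate at each vertex with the stability of near-geodesics in $\H^3$.

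For the local step, I would apply the hyperbolic law of cosines to three consecutive vertices $v_{i-1}, v_i, v_{i+1}$ on $c$ with adjacent segment lengths $\ell_i,\ell_{i+1} \ge R$ meeting at interior angle $\pi-\theta$ at $v_i$ with $\theta \le \del$:
\[
\cosh d(v_{i-1},v_{i+1}) = \cosh(\ell_i+\ell_{i+1}) - \sinh\ell_i\sinh\ell_{i+1}(1-\cos\theta).
\]
Using $1-\cos\theta \le \del^2/2$ together with $\sinh\ell_i\sinh\ell_{i+1} \le \tfrac12\cosh(\ell_i+\ell_{i+1})$, an $\operatorname{arccosh}$ computation yields $d(v_{i-1},v_{i+1}) \ge \ell_i+\ell_{i+1}-\eta(\del)$, where $\eta(\del)\to 0$ as $\del\to 0$ uniformly for $R$ sufficiently large. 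Thus every three-vertex sub-arc is $(1+\ep')$-bilipschitz at its endpoints with $\ep'=\eta(\del)/R$ arbitrarily small.

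For the global step, I would invoke the standard local-to-global stability principle for quasi-geodesics in the Gromov hyperbolic space $\H^3$: a bi-infinite curve that is $(1+\ep')$-bilipschitz on every sub-arc of a sufficiently large fixed scale is globally $(1+\ep)$-bilipschitz, with $\ep$ as small as desired once $\ep'$ is small and the scale is large. Choosing $R$ large enough for the three-vertex arcs to exceed this scale and $\del$ small enough for $\ep'$ to be small gives the global estimate on vertices; interpolation to arbitrary points on $c$ is routine since $c$ is piecewise geodesic of unit speed.

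The main obstacle is keeping the constants sharp, since the conclusion demands a genuine $(1+\ep)$-bilipschitz map with no additive defect. This requires the strict negative curvature of $\H^3$, not merely coarse Gromov hyperbolicity: the exponential divergence of hyperbolic geodesics prevents the angular defects at successive vertices from accumulating, ensuring they remain a vanishing fraction of the arc length as $R\to\infty$ and $\del\to 0$.
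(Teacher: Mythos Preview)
The paper's own proof is a bare citation to \cite[I.4.2.10]{CanaryEpsteinGreen84}; it supplies no argument beyond that reference. Your proposal is therefore far more detailed than what the paper provides, and the local step via the hyperbolic law of cosines is exactly the kind of computation that underlies the cited result.

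The one place your outline is not yet a proof is the global step. The generic local-to-global principle for quasi-geodesics in a Gromov hyperbolic space produces a $(K,C)$-quasi-geodesic with an additive defect $C>0$; it does not by itself yield a pure $(1+\ep)$-bilipschitz map. You flag this correctly, but the sentence ``exponential divergence of hyperbolic geodesics prevents the angular defects at successive vertices from accumulating'' is a slogan, not an argument. What actually makes this work in $\H^3$ (and is the content of the Canary--Epstein--Green lemma) is an inductive estimate: if two geodesic segments of length $\ge R$ meet at angle $\ge \pi-\del$, then the chord subtending them has length within $\eta(\del,R)$ of the sum, \emph{and} the chord meets each segment at an angle $\ge \pi-\del'$ with $\del'$ controlled by $\del$ and $R$. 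Iterating this replacement across all vertices telescopes the total defect to something bounded by a geometric series, hence a fixed multiple of $\eta$, which is then absorbed into the multiplicative constant since every segment has length $\ge R$. Once you make that inductive angle control explicit, your argument is complete and coincides with the substance of the reference the paper cites.
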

\begin{proof}
This lemma follows from 
\cite[I.4.2.10]{CanaryEpsteinGreen84} and \cite[Proof of Theorem 4.2]{Epstein-Mardern-Markovic}.
\end{proof}

\begin{proposition}\Label{PiecewiseGeodesicCurve}
For all $\ep > 0$ and $\ep' \in (0, \pi)$, there are $R > 0$ and $Q > 0$, such that if $\length s_i > R$ for all $i \in \Z$ and the angle between every pair of adjacent geodesic segments is at least $\pi - \ep'$, then $c$ is a $(1 + \ep, Q)$-quasi-isometric embedding. 
\end{proposition}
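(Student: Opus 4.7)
The plan is to first obtain a local ``no-shortcut'' estimate at each bend via hyperbolic trigonometry, and then upgrade it to a global quasi-isometric embedding by exploiting the Gromov hyperbolicity of $\H^3$. The upper bound $d(c(s), c(t)) \leq |s - t|$ is immediate since $c$ is parametrized by arc length, so the content lies in the lower bound $d(c(s), c(t)) \geq (1+\ep)^{-1}|s-t| - Q'$.

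For the local estimate I would apply the hyperbolic cosine rule to three consecutive vertices $v_{i-1}, v_i, v_{i+1}$, with $\theta_i$ the interior angle at $v_i$, and with $a = \length(s_i)$, $b = \length(s_{i+1})$:
\[
\cosh d(v_{i-1}, v_{i+1}) = \cosh a \cosh b - \sinh a \sinh b \cos \theta_i = \cosh(a-b) + 2\sin^2(\theta_i/2)\sinh a \sinh b.
\]
With $\theta_i \geq \ep'$ and $a,b \geq R$ large, the second term dominates like $\tfrac12 \sin^2(\ep'/2)\, e^{a+b}$, giving
\[
d(v_{i-1}, v_{i+1}) \geq a + b - C(\ep'),
\]
with $C(\ep') = -2\log\sin(\ep'/2) + o_R(1)$. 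Thus each bend contributes at most a bounded additive defect; the same estimate extends, at the cost of absorbing further constants, to any sub-path containing only $O(1)$ consecutive segments.

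Next, I would convert this per-bend control into a global $(1+\ep, Q)$ estimate in two stages. First, to ensure $c$ is a quasi-geodesic at all, I would invoke the local-to-global principle for quasi-geodesics in $\delta$-hyperbolic spaces (e.g.\ Bridson--Haefliger III.H.1.13) applied to sub-paths of length $\leq 3R$: the cosine-rule estimate above makes $c$ a $3R$-local $(1, C(\ep'))$-quasi-geodesic, so $c$ is a global $(\lambda_1, k_1)$-quasi-geodesic for some constants depending on $\ep'$ and on the hyperbolicity constant of $\H^3$. Then, by the Morse lemma (stability of quasi-geodesics in $\H^3$), the direction along $c$ near any vertex $v_i$ is close to the direction toward the eventual endpoint of any sub-path. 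This justifies an inductive telescoping of per-bend defects along a sub-path from $c(s)$ to $c(t)$: with at most $|t-s|/R + 1$ bends contributing at most $C(\ep')$ each,
\[
d(c(s), c(t)) \geq |t-s| - (C(\ep')/R)\,|t-s| - C(\ep'),
\]
and choosing $R$ large enough that $C(\ep')/R < \ep/(1+\ep)$ yields the required $(1+\ep, Q)$-quasi-isometric embedding.

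The main obstacle is the sharpening step just described. The local-to-global theorem alone only produces some $(\lambda_1, k_1)$ and gives no multiplicative constant close to $1$. The technical heart of the proof is to combine the qualitative global quasi-geodesic property (from local-to-global) with the quantitative per-bend defect estimate (from the cosine rule), using Morse stability to convert per-vertex defects into a genuinely additive total defect along the whole curve. Correspondingly, $R$ must be chosen depending on $\ep$, on $\ep'$, and on the Morse-stability constant, which itself depends on $(\lambda_1, k_1)$, and hence ultimately on $\ep'$ and the universal hyperbolicity constant of $\H^3$.
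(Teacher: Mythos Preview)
Your approach is workable but takes a genuinely different route from the paper's. The paper does not use the local-to-global principle or Morse stability at all. Instead it performs a simple geometric ``corner-cutting'': at each vertex $x_i$ it replaces the two length-$r$ stubs $[x_i^-,x_i]\cup[x_i,x_i^+]$ by the single geodesic segment $[x_i^-,x_i^+]$, producing a new piecewise geodesic $c_r$. Because hyperbolic triangles with two long sides have small base angles, once $r$ is large the bends of $c_r$ all have angle $>\pi-\del$; then the already-proved Lemma~\ref{AlmostGeodesicPieacewiseGeodesic} makes $c_r$ a $(1+\ep)$-bilipschitz embedding, and since $c$ and $c_r$ are at bounded Hausdorff distance $r$, the $(1+\ep,Q)$ estimate for $c$ follows immediately.

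Your argument, by contrast, first establishes \emph{some} quasi-geodesic constant via local-to-global, then bootstraps to $(1+\ep,Q)$ by projecting vertices to the Morse-close geodesic and summing per-bend defects. This is correct in outline (the projection argument does make the ``additive defect'' step rigorous, with the Morse constant $D(\ep')$ playing the role you assign to $C(\ep')$), but it is heavier machinery for the same conclusion. The paper's trick is more economical: it sidesteps the sharpening problem entirely by reducing to the bilipschitz case already in hand, and all the hyperbolic geometry is packaged into the single observation that long isoceles triangles have small base angles. Your route has the advantage of being transportable to general $\delta$-hyperbolic spaces where one cannot literally ``straighten'' a curve, whereas the paper's argument uses the ambient geometry of $\H^3$ more directly.
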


\proof
For each $i \in \Z$, let $x_i$ be the common endpoint of $s_{i -1}$ and $s_i$, so that $x_i$ is a non-smooth point of $c$.  
Pick $r > 0$ and we assume that $R > 2 r$. 
Let $x_i^-$ be the point on $s_{i - 1}$ such that $d(x_i^-, x_i) = r$. 
Let $x_i^+$ be the point on $s_i$ such that $d(x_i, x_i^+) = r$. 
Then, we replace  two geodesic segments $[x_i^-, x_i] \cup [x_i, x_i^+]$ of  $c$ with the single geodesic segment $[x_i^-, x_i^+]$; see \Cref{fShortCut}.
Let $c_r$ be the piecewise geodesic in $\H^3$ obtained from $c$ by applying this replacement for every $i \in \Z$. 

By basic hyperbolic geometry, the following holds. 
\begin{lemma}\Label{LargeTriangles}
For every $\del > 0$, there is $r_\del > 0$ satisfying the following: For every  $r > r_\del$ and every  $R > 3 r$, then the angle at every non-smooth point of $c_r$ is more than $\pi - \del$. 
\end{lemma}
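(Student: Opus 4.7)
The plan is to reduce the claim to a statement about a single isoceles hyperbolic triangle in $\H^3$, prove that statement by standard hyperbolic trigonometry, and then reassemble.

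First I observe that at each original vertex $x_i$, the points $x_i^-$, $x_i$, $x_i^+$ form an isoceles triangle $T_i$ in $\H^3$: the two sides emanating from $x_i$ each have length $r$, and the apex angle at $x_i$ equals the interior angle $\alpha_i$ of $c$ at $x_i$, which by hypothesis lies in $[\ep',\pi]$. The new non-smooth points of $c_r$ are precisely $x_i^-$ and $x_i^+$. The central step is then to show that the base angle $\theta_i$ of $T_i$, common to $x_i^-$ and $x_i^+$ by the isoceles property, tends to $0$ uniformly in $\alpha_i \in [\ep',\pi]$ as $r \to \infty$. The cleanest qualitative argument is Gauss--Bonnet: $\alpha_i + 2\theta_i = \pi - \operatorname{Area}(T_i)$, while as $r \to \infty$ the triangle $T_i$ exhausts the ideal triangle obtained by prolonging its equal sides (of area $\pi - \alpha_i$), so $\theta_i \to 0$. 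For uniformity I would instead apply the hyperbolic laws of sines and cosines to $T_i$ to extract the explicit bound $\sin\theta_i \leq 2e^{-r}/\tan(\ep'/2)$.

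Next I relate $\theta_i$ to the angle of $c_r$ at the new non-smooth point $x_i^+$. The two segments of $c_r$ meeting there are the shortcut $[x_i^-, x_i^+]$ and the middle segment $[x_i^+, x_{i+1}^-]$, the latter contained in $s_i$. Since $x_i$, $x_i^+$, $x_{i+1}^-$ all lie on the single geodesic $s_i$ with $x_i^+$ between $x_i$ and $x_{i+1}^-$, the outgoing directions from $x_i^+$ toward $x_i$ and toward $x_{i+1}^-$ are opposite. Working inside the totally geodesic hyperbolic plane containing $T_i$ and the relevant portion of $s_i$, the angle at $x_i^+$ between the ray toward $x_i^-$ and the ray toward $x_{i+1}^-$ therefore equals $\pi - \theta_i$, since the ray toward $x_i^-$ makes angle $\theta_i$ with the opposite ray toward $x_i$. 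The symmetric argument at $x_i^-$ yields the same value. Given $\del>0$, choosing $r$ large enough that $\theta_i < \del$ for every $i$ then forces the angle of $c_r$ at every new non-smooth point to exceed $\pi - \del$.

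The only subtle point is the uniformity of $\theta_i \to 0$ over the full range $\alpha_i \in [\ep',\pi]$; the qualitative Gauss--Bonnet argument leaves room for the rate of convergence to depend on $\alpha_i$, so I would rely on the explicit trigonometric estimate above, whose dependence on $\alpha_i$ enters only through $\tan(\alpha_i/2) \geq \tan(\ep'/2) > 0$. Everything else is elementary two-dimensional hyperbolic geometry inside the plane of $T_i$.
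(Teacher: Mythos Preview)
Your argument is correct and is precisely the ``basic hyperbolic geometry'' the paper invokes without elaboration: the paper gives no proof beyond that phrase, and your isoceles-triangle analysis with the explicit bound $\tan\theta_i = 1/(\cosh r \cdot \tan(\alpha_i/2))$ (from $\cot(\alpha_i/2)\cot\theta_i = \cosh r$ in the half-triangle) is exactly what is needed to justify it, including the uniformity in $\alpha_i\in[\ep',\pi]$.
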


\begin{figure}
\begin{overpic}[scale=.03
] {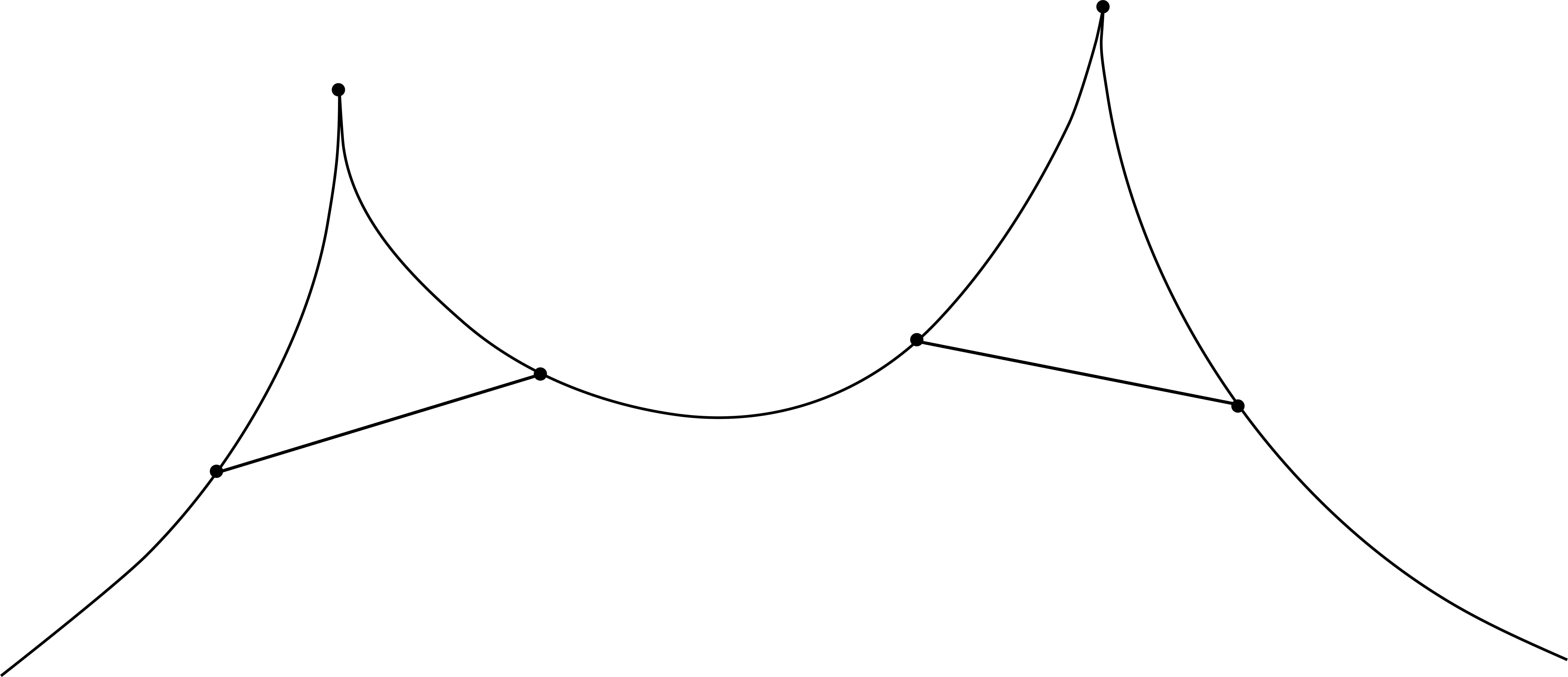} 
   \put(20 ,39 ){$x_i$}  
   \put(8 , 14 ){$x_i^-$}
     \put( 34 , 21 ){$x_i^+$}  
     \put(70 , 44 ){$x_{i + 1}$}  
      \put( 48 , 23 ){$x_{i+1}^-$} 
            \put( 80 , 19 ){$x_{i+1}^+$} 
   \put( 41 , 12 ){$s_i$} 
   \put(6 , 2 ){$s_{i -1}$} 
   \put(80 , 5 ){$s_{i +1}$}
      \end{overpic}
\caption{A modification of a piecewise geodesic}\Label{fShortCut}
\end{figure}

Then \Cref{AlmostGeodesicPieacewiseGeodesic} and \Cref{LargeTriangles} imply the proposition.
\Qed{PiecewiseGeodesicCurve}

 \subsection{Angles between geodesic laminations}\Label{sAngleBetweenLaminations}(See \cite{Baba-15gt}.)
 
 Let $\tau$ be a hyperbolic surface. 
 If  two geodesics $\ell_1, \ell_2$ on $\tau$ intersect in a point $p$, then let $\angle_p(\ell_1, \ell_2)$ denote the angle between them  which takes a value in $[0, \pi/2]$.
 More generally, let $\lam_1$ and $\lam_2$ be geodesic laminations on $\tau$.  
 Then 
 the angle $\angle_{\tau} (\lam_1, \lam_2) \in  [0, \pi/2]$ is the supremum of $\angle_p(\ell_1, \ell_2)$ over all leaves $\ell_1 \in \lam_1$ and $\ell_2 \in \lam_2$ intersecting a point $p$. 
 
     
\subsection{The Morgan-Shalen compactification} (See \cite{Dumas18HolonomyLimit} \cite{Kapovich-01}.) 
We describe a compactification of the $\PSL(2, \C)$-character variety of $S$. 
Let
$$\rchi \to \R_{ > 0}^{\pi_1(S)} / \R_{>0}$$ be the mapping defined by 
$$\rho \mapsto \log (\vert \tr \rho(\gam) \vert  + 2)_{\gam \in \pi_1(S)}.$$
Then the image is relatively compact in the infinite-dimensional projective space $ \R_{ > 0}^{\pi_1(S)} / \R_{>0}$, and the compactification in the projective space is called the Morgan-Shalen compactification of $\rchi$.

A boundary point $p$ of the Morgan-Shalen compactification corresponds to a  minimal small $\pi_1(S)$-action on a metric tree $T_p$.
Namely, if a sequence $\rho_i \col \pi_1(S) \to \PSL_2\C \in \rchi$ converges to $p$, then there is a sequence $r_i >0$ converging to $0$ such that 
\begin{itemize}
\item there is a sequence of points $x_i \in \H^3$ such that $\pi_1(S)$-action on $\H^3$ with the base point $x_i$ converges to the $\pi_1(S)$-action on $T_p$ when the metric of $\H^3$ is scaled by $r_i$, and
\item  the translation lengths of $\rho(\gam)$ for $\gam \in \pi_1(S)$ scaled by $\gam_i$ converge to the translation lengths on $T_p$ by the $\pi_1(S)$-action.
\end{itemize} 

A Teichm\"uller space $\TT$ can be regarded as the space of marked hyperbolic structures on $S$. 
Thus $\TT$ can be regarded as a component of the space of discrete faithful representations $\pi_1(S) \to \PSL_2 \R$ up to conjugation.  
The Thurston compactification of the Teichmüller space is the compactification given by the projective length spectra of the translation lengths (\cite{FLP}). 
Thus, via Bonahon's interpretation via geodesic currents, the Thurston boundary can be regarded as a part of the Morgan-Shalen boundary, by embedding $\TT$ into $\rchi$ as a component of a real slice. (See \cite[\S 11.16]{Kapovich-01}.)

\subsection{Complex analytic geometry}
(\cite{Goldman84SymplecticNatureOfFundamentalGroupsOfSurfaces}.)
We recall a standard theorem about a complex-analytic set.

\begin{theorem}[Removable Singularity Theorem; see for example \cite{Taylor02SeveralComplexVariables}, \S3.3.2]\Label{RemovableSingularity}
Let $Y$ be an analytic set. 
Let $A$ be a closed subset of $Y$ contained in a proper subvariety of $Y$. 
Suppose that  $f\col Y \minus A \to \C$ is an analytic function which is bounded in a small neighborhood of every point in $A$. 
Then $f$ continuously extends to an analytic function on  $Y$. 
\end{theorem}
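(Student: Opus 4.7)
The plan is to reduce to the classical Riemann removable singularity theorem in one complex variable via local coordinates around smooth points of the containing subvariety, and then induct on dimension to handle the singular strata.

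First, I would reduce to the smooth setting. Because $Y$ is an analytic set, its singular locus $Y_{\mathrm{sing}}$ is a proper analytic subvariety of $Y$. Enlarging the given proper subvariety $V \supset A$ to $V \cup Y_{\mathrm{sing}}$, I can work on the complex manifold $Y^\circ := Y \setminus Y_{\mathrm{sing}}$ and try to extend $f$ across $V \cap Y^\circ$. Note that $f$ is already analytic on $V \setminus A$ and locally bounded near $A$, so $f$ is locally bounded near all of $V \cap Y^\circ$. Thus it suffices to prove the following: on a complex manifold, a holomorphic function on the complement of a proper analytic subvariety $V$ that is locally bounded near $V$ extends holomorphically across $V$.

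Second, I would work locally at a smooth point $p$ of $V$ in $Y^\circ$. Choose holomorphic coordinates $(z_1,\ldots,z_n)$ on a polydisc $U$ around $p$ so that $V \cap U = \{z_1 = 0\}$. For each fixed $(z_2,\ldots,z_n)$, the slice $z_1 \mapsto f(z_1,z_2,\ldots,z_n)$ is holomorphic on the punctured disc and bounded. The one-variable Riemann removable singularity theorem extends each such slice across $z_1 = 0$, yielding a pointwise extension $\tilde f$ on $U$. To see $\tilde f$ is jointly holomorphic, I would represent $\tilde f$ by the Cauchy integral in $z_1$ along a circle in the regular locus, which manifestly depends holomorphically on the remaining variables; alternatively one invokes Hartogs' theorem on separately analytic, locally bounded functions.

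Third, the singular locus $V_{\mathrm{sing}}$ of $V$ is a proper analytic subvariety of $Y^\circ$ of strictly smaller dimension. By the previous step, $\tilde f$ is already holomorphic on $Y^\circ \setminus V_{\mathrm{sing}}$ and remains locally bounded near $V_{\mathrm{sing}}$ (since it is continuous on $V \setminus V_{\mathrm{sing}}$ and $A$ is contained in the old $V$). An induction on $\dim V$ then extends $f$ across $V_{\mathrm{sing}}$, with base case the zero-dimensional one handled by the same one-variable argument in any transverse direction. Finally, the residual extension across $Y_{\mathrm{sing}}$ follows by the same induction on dimension, since $f$ is now known bounded and holomorphic off a proper analytic subset. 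The main obstacle is the passage from separate holomorphicity to joint holomorphicity; handling it via the Cauchy-integral representation rather than quoting Hartogs keeps the argument self-contained, and the dimensional induction manages the singular strata cleanly.
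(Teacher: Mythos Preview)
The paper does not prove this statement; it is quoted in the preliminaries with a reference to Taylor's textbook, so there is no proof on the paper's side to compare against.

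Your outline is the standard one when $Y$ is a complex \emph{manifold}: flatten $V$ to a coordinate hyperplane locally, apply the one-variable Riemann theorem on each slice, recover joint holomorphy from the Cauchy integral in the transverse variable, and strip off the singular strata of $V$ by induction on $\dim V$. That part is fine.

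The last sentence, however, hides a real gap. Your induction runs on $\dim V$ inside the fixed manifold $Y^\circ$; it gives no mechanism for extending across $Y_{\mathrm{sing}}$, where you are no longer on a manifold and the slicing argument is unavailable. Saying ``the same induction on dimension'' does not apply, because the ambient space has changed, not just the exceptional set. In fact, without an extra hypothesis such as local irreducibility or normality of $Y$, the conclusion is false: take $Y=\{xy=0\}\subset\C^2$, $A=\{(0,0)\}$, and let $f$ be $1$ on the punctured $x$-axis and $0$ on the punctured $y$-axis; then $f$ is holomorphic and bounded on $Y\setminus A$ but has no continuous extension to the origin. The paper only applies the theorem to spaces that are (at least locally) irreducible, so the clean fix is to make that hypothesis explicit and, at a singular point of $Y$, pass to a local branched-cover parametrization over a polydisc (or to the normalization), run your manifold argument upstairs, and check the extension descends.
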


\subsection{Goldman's symplectic form}
(\cite{Goldman84SymplecticNatureOfFundamentalGroupsOfSurfaces}.)
Let $\mathfrak{g}$ be the $\PSL_2\C$-Lie algebra.  
Then the adjoint representation $\Ad\col  \PSL_2\C \to Aut \mathfrak{g} \sub {\rm GL}_3\C$ is induced by the conjugation of $\PSL_2\C$ by $\PSL_2\C$.
By $\mathfrak{g}_{\Ad \rho}$, we regard $\mathfrak{g}$ as a $\pi_1(S)$-module via the composition of $\rho\col \pi_1(S) \to \PSL_2\C$.
Then the Zariski tangent space of the representation variety $\RR$ at $\rho \in \RR$ is the vector space of  1-cocycles
$$Z^1(\pi_1(S); \mathfrak{g}_{Ad \rho}) = \{ u \in \mathfrak{g}^{\pi_1(S)}  \mid u(xy) = u(x) + ({\rm Ad} \rho(x)) \, u(y)\}.$$
The subspace of 1-coboundaries
$B^1(\pi_1(S); \mathfrak{g}_{Ad \rho})$
consists of $u \in \mathfrak{g}^{\pi_1(S)}$, such that there is $u_0 \in \mathfrak{g}$  satisfying $u(x) =  u_0 - \Ad(\rho(x)) u_0$ for all $x \in \pi_1(S)\}$.
Then the Zariski tangent space of $\rchi$ at $\rho$ is the quotient vector space $$H^1(\pi_1(S); \mathfrak{g}_{Ad \rho}) = \frac{Z^1(\pi_1(S); \mathfrak{g}_{Ad \rho})}{B^1(\pi_1(S); \mathfrak{g}_{Ad \rho})}.$$   
  Let $w(\rho)$ denote the bilinear form on the Zariski tangent space obtained by the composition 
  \begin{eqnarray*}
  H^1(\pi_1(S); \mathfrak{g}_{Ad \rho}) \times H^1(\pi_1(S); \mathfrak{g}_{Ad \rho}) &\xrightarrow{\cup}& H^2(\pi_1(S); \mathfrak{g}_{Ad \rho} \otimes \mathfrak{g}_{Ad \rho}) \\
  &{\overset{ \cong} \rightarrow}& H^2(\pi_1(S); \C)  \cong \C.
\end{eqnarray*} 
Here the first mapping is the cup product, and the second mapping is an isomorphism given by the coefficients pairing by the bilinear form $\mathfrak{B}\col  \mathfrak{g}_{Ad \rho} \otimes \mathfrak{g}_{Ad \rho} \to \C$  given by $(A, B) \to \tr AB$.
 Goldman proved that $w$ is a complex symplectic form on $\rchi$, i.e. a non-degenerate closed holomorphic $(2, 0)$-form on the character variety $\rchi$; see \cite{Goldman84SymplecticNatureOfFundamentalGroupsOfSurfaces}.

 \subsection{Harmonic maps between hyperbolic surfaces}\Label{sHarmonicMaps}
(See (\cite{Wolf91, Minsky92}; see also  \cite{Sakai(25)}.)
 
 For (marked) Riemann surfaces $X, Y \in \TT$, there is a unique harmonic map $h\col X \to Y$ preserving the marking. 
Then the Hopf differential of the harmonic map $h$ is a holomorphic quadratic differential $q$ on $X$.
Away from the zeros of $q$, the differential $q$ gives natural coordinates $w = x + i y$ in $\C$ so that $q = dw$ (see, for example, \cite{FarbMarglit12}).
By these coordinates, the Euclidean structure on $\C$ induces a singular Euclidean metric on $X$ where the zeros of $q$ are the singular points, and the Euclidean structure realizes the conformal structure of $X$.

The {\it Beltrami differential} of $h$ is given by 
 $$\nu_h = \frac{f_{\bar{z}} d\bar{z}}{f_z dz}. $$ 
Then $\vert \nu_h(z) \vert < 1$.
Then as $Y$ leaves every compact set in $\TT$ while $X$ is fixed, $\vert \nu_h(z) \vert$  converges to 1.
Let 
$$G(h) = \log \bigg(\frac{1}{|\nu(h)|}\bigg).$$

Let $g_Y$ denote the hyperbolic metric on $Y$, and let $g = h^\ast(g_Y) $ be the pull-back metric on $X$ by $h$.
 Then $g$ is, in natural coordinates $x + i y$ given by $q$,
 \begin{eqnarray}
 d s^2 = \frac{\cosh G(t) + 1}{2} d x^2 +   \frac{\cosh G(t) - 1}{2} d y^2  \label{ePullBackMetric}
\end{eqnarray}

The $L^1$-norm
$\| q \|   = \int_X | \phi_i | dz d\bar{z}$ is the total area w.r.t. the flat metric. 
If  the $r$-ball centered at $p \in X$ contains no zeros in the flat metric, then 
 \begin{eqnarray}
 G(h)(p) \leq \sinh^{-1} \frac{\Area X}{ 2 \pi r^2},  \Label{iEstimate}
 \end{eqnarray}
 where $\Area X$ denotes the hyperbolic area $2\pi(2g-2)$ of $X$ (Minsky \cite[Lemma 3.2]{Minsky92}).

Therefore,  if $Y$ leaves every compact subset in $\TT$ while $X$ is fixed, the hyperbolic metric is stretched in the horizontal direction and shrinks in the vertical direction away from the zeros.

More specifically, we let $(Y_i)_{i = 1}^\infty$ be a sequence in $\TT$ converging to a point $[V] \in \PML = \bdr \TT$ in the Thurston boundary,  where $\PML$ denotes the space of projective measured foliations on $S$. 
Let $h_i\col X \to Y_i$ be the unique harmonic map, and let $q_i$ be the holomorphic quadratic differential on $X$ given by the Hopf differential of $h_i$.
Let $V_i$ be the vertical measured foliation of $q_i$, and let $H_i$ be the horizontal measured foliation of $q_i = \phi_i dz^2$.
Then its projective class $[V_i]$ converges to $[V]$ as $i \to \infty$ (\cite{Wolf91}).

The total Euclidean area $\| q_i\| = \int_X | \phi_i | dz d\bar{z} $ diverges to infinity as $i \to \infty$, and by (\ref{ePullBackMetric}) and (\ref{iEstimate}),
$h_i$ stretches $X$ in the horizontal direction $H_i$ so that the Euclidean length and the hyperbolic length are close away from the zeros, and shrinks in the vertical direction $V_i$ more and more.

\section{Injectivity of the real bending maps}\Label{sRealBendingMapIsInjective}

Let $\ML$ be the space of measured laminations on $S$. 
Each pair $(\tau, L) \in \TT \times \ML$ induces an equivariant pleated surface $\H^2 \to \H^3$, unique up to $\PSL_2\C$. 
Let $b \col \TT \times \ML \to \rchi$ be the holonomy map of the bending maps.
\begin{theorem}\label{BendingTeich}
Fix arbitrary $L \in \ML(S)$.
Then the restriction $b$ to $\TT \times \{L\}$ is a real-analytic embedding.  
Moreover, this embedding is proper if and only if $L$ contains no periodic leaf of weight $\pi$ modulo $2\pi$. 
\end{theorem}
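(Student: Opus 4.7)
The plan is to establish three things in sequence: real-analyticity of $b_L$, the combined injectivity and immersion property (so that $b_L$ is a real-analytic embedding), and the dichotomy characterizing properness.

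For real-analyticity, I would first verify the case where $L$ is a weighted multiloop: then $b_L(\tau)$ is built from $\rho_\tau$ by a finite composition of conjugations by elliptic rotations whose axes depend real-analytically on $\tau$. For a general measured lamination, I would approximate $L$ by weighted multiloops $L_n \to L$ in $\ML$ and invoke the quasi-geodesic control of \Cref{PiecewiseGeodesicCurve} to get local uniform convergence of $b_{L_n} \to b_L$ on $\TT$; the limit map is then real-analytic. For injectivity, suppose $b_L(\tau_1) = b_L(\tau_2)$ in $\rchi$; after $\PSL_2\C$-conjugation we obtain a single representation $\rho$ with two $\rho$-equivariant pleated maps $f_i\col(\H^2,\tau_i)\to\H^3$ sharing the same bending lamination (the geodesic image of $L$). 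Such a pleated map is unique up to isometry of $\H^3$: the boundary map $\partial f_i\col\partial\H^2\to\partial\H^3$ is determined by the $\rho$-images of the endpoints of axes of leaves of $L$, and $L$ fills the boundary data densely. Hence $f_1=f_2$ after a further conjugation, so $\tau_1=\tau_2$. The immersion condition follows from the same rigidity applied to first-order variations, or equivalently from injectivity of the bending cocycle on $T_\tau\TT$.

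The main content is the properness dichotomy. For the ``only if'' direction, assume $L$ has a closed leaf $\gamma$ of weight $\pi$ modulo $2\pi$. I would let $\tau_n$ pinch $\gamma$ while keeping the complement in a compact part of the moduli space of $S\setminus\gamma$; then $\tau_n\to\infty$ in $\TT$, yet the bent holonomy of $\gamma$ has complex translation length $\ell(\gamma,\tau_n)+i\pi\to i\pi$, a half-turn. A pleated-surface continuity argument --- the bending by $\pi$ folds the two sides of $\gamma$ onto each other via a half-turn that remains bounded as the pinch occurs --- shows that $b_L(\tau_n)$ stays in a compact subset of $\rchi$ after normalization, proving non-properness.

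For the ``if'' direction, assume $L$ has no closed leaf of weight $\pi$ modulo $2\pi$ and let $\tau_n\to\infty$ in $\TT$. By the standard compactness description of $\TT$, after passing to a subsequence there is a simple closed geodesic $\gamma$ with $\ell(\gamma,\tau_n)\to 0$ or $\infty$, or a Fenchel--Nielsen twist parameter tending to infinity. I would split on the combinatorial position of $\gamma$ with respect to the support of $L$. If $\gamma$ is disjoint from $L$, the bent holonomy of $\gamma$ remains hyperbolic and the Collar Lemma forces some transverse element to diverge in $\rchi$. If $\gamma$ is transverse to $L$, \Cref{PiecewiseGeodesicCurve} applied to a long piecewise-geodesic representative of a loop crossing the thin collar of $\gamma$ yields an element of unbounded complex translation length. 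The main obstacle is the remaining case, where $\gamma$ is itself a leaf of $L$ with weight $w\not\equiv\pi\pmod{2\pi}$: then the holonomy of $\gamma$ has complex translation length $\ell+iw$, which is bounded, so divergence must appear through a transverse element. I would show that the only way $\tau_n\to\infty$ consistently with this boundedness is through a diverging Fenchel--Nielsen twist along $\gamma$, and that this twist is detected in the bent holonomy of a transverse loop precisely when $w\not\equiv\pi\pmod{2\pi}$ --- the value $w\equiv\pi$ being singular because the half-turn bend identifies the two twist directions and absorbs arbitrary twist. This last computation is where I expect the delicate work to concentrate.
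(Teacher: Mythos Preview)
Your injectivity, real-analyticity, and ``only if'' sketches match the paper's approach. One correction: if $\gamma$ is a closed leaf of $L$, bending along $L$ leaves the holonomy of $\gamma$ itself unchanged ($\gamma$ is its own bending axis), so its translation length is $\ell$, not $\ell + iw$. Your conclusion that this holonomy stays bounded when $\ell$ is bounded survives the correction, but the formula is wrong.

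The ``if'' direction has a genuine gap. Your case split on a single curve $\gamma$ does not cleanly organize divergence in $\TT$, and the sub-case you flag as delicate --- $\gamma$ a leaf of $L$ with weight $\not\equiv \pi$ --- is precisely where you need a mechanism and do not supply one. You also miss the sub-case where such a leaf is \emph{pinched} (you jump from ``holonomy of $\gamma$ bounded'' to ``twist must diverge'', overlooking $\ell\to 0$). In either situation you must exhibit a \emph{transverse} loop with diverging bent holonomy while controlling the bending contribution from the rest of $L$ along that loop, and your outline does not say how. The paper sidesteps the whole case analysis by a different device: it constructs, adapted to $L$, finitely many loops $\ell_1,\dots,\ell_n$ that form length coordinates for $\TT$ and are chosen so that each $\ell_k$ has arbitrarily small transversal measure with the irrational sublamination of $L$ and meets at most one periodic leaf, in at most two points (\Cref{LengthCoordinatesForL}). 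Then $\tau_i\to\infty$ forces $\length_{\tau_i}\ell_k\to\infty$ for some $k$, and the controlled intersection lets \Cref{PiecewiseGeodesicCurve} apply directly to the bent image of $\tilde\ell_k$: the bending angles at the (at most two) periodic-leaf crossings are bounded away from $\pi$ exactly because no periodic leaf has weight $\equiv\pi\pmod{2\pi}$, while the irrational part contributes only a small total bend, so the bent curve is quasi-geodesic and the holonomy along $\ell_k$ diverges. Tailoring the test curves to $L$ \emph{before} letting $\tau_i$ escape is the idea your outline is missing.
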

\Cref{BendingTeich} is implied by \Cref{RealAnalyticEmbedding} and \Cref{RealPropermess}.

Let $b_L\col \TT \to \rchi$ denote the restriction of $b$ to $\TT \times \{L\}$.
 Given a representation $\rho\col \pi_1(S) \to \PSL_2\C$, geodesic lamination $\lambda$ on $S$ is {\it realizable} if there is a $\rho$-equivariant pleated surface $\H^2 \to \H^3$, such that its pleating loci contains $\lambda$.
Then, for $L \in \ML$,
let $N = N_L$ be an open neighborhood of the Fuchsian space $\TT$ in the smooth part of $\rchi$ such that the underlying geodesic lamination $|L|$ is {realizable} for all $\rho \in \rchi$.
Then, $b_L\col \TT \to \rchi$ extends to the bending map $\hat{b}_L\col N_L \to \rchi$ by bending cocycle (\cite{Bonahon96ShearingBendingSymplecticForm}). 
\begin{proposition}\Label{Injectivity}
For all $L \in \ML$, 
$\hat{b}_L\col N_L \to \rchi$ is injective. 
\end{proposition}
\begin{proof}
As $|L|$ is realizable on $\Im \hat{b}_L$, we have the unbending map $\hat{b}_{-L}\col \Im \hat{b}_L \to \rchi$ by $-L$ . 
Then, clearly, $\hat{b}_{-L} \circ\hat{b}_L$ is the identity map on $N_L$. 
Thus $\hat{b}_L$ is injective. 
\end{proof}

\begin{proposition}\Label{RealAnalyticEmbedding}
The injective map $b_L\col \TT \to \rchi$ is a real-analytic embedding. 
\end{proposition}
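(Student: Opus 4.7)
The plan is to reduce to the case where $L$ is a weighted multiloop, where an explicit algebraic formula is available, and then to pass to arbitrary measured laminations by approximation combined with a complexification argument.

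\textbf{Step 1 (multiloop case).} Suppose first that $L = \sum_{i=1}^{n} w_i \ell_i$ is a weighted multiloop. For any $\gam \in \pi_1(S)$, represent it by a path and lift to $\H^2$; the lift meets the total lift $\ti L$ transversely in finitely many geodesics. The description in \S\ref{sBendingDeformation} of the bending as conjugation by elliptic rotations across each lifted leaf yields an explicit formula
\[
b_L(\tau)(\gam) \;=\; \rho_\tau(\eta_0)\, E_{\ti\alpha_1}(w_{i(1)})\, \rho_\tau(\eta_1) \cdots E_{\ti\alpha_m}(w_{i(m)})\, \rho_\tau(\eta_m),
\]
where $\eta_j \in \pi_1(S)$ depend only on $\gam$ and the combinatorics of the intersection, each $\ti\alpha_k$ is the axis in $\H^3$ of a specific conjugate of $\rho_\tau(\ell_{i(k)})$, and $E_{\ti\alpha_k}(w)$ denotes the elliptic rotation of angle $w$ about $\ti\alpha_k$. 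Now $\tau \mapsto \rho_\tau$ is real-analytic into the character variety (a standard fact about the Fuchsian locus), the axis of a hyperbolic element of $\PSL_2\C$ is a real-analytic function of that element, and the elliptic rotation of a fixed angle about a given axis is a real-analytic function of the axis; the composition of these real-analytic ingredients is real-analytic in $\tau$.

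\textbf{Step 2 (general $L$).} Choose weighted multiloops $L_k \to L$ as in \S\ref{sBendingDeformation}, so that $b_{L_k} \to b_L$ pointwise (and, after appropriate normalization, locally uniformly on $\TT$). Pointwise or even uniform convergence of real-analytic maps does not by itself give real-analyticity of the limit, so the plan is to complexify: allow the weights $w_i$ entering the formula of Step~1 to take values in a complex neighborhood of the real axis. The formula then defines a holomorphic extension $B_{L_k}$ of $b_{L_k}$ on a common complex neighborhood $\mathcal{U}$ of $\TT$ inside $\rchi$. The Epstein--Marden-type estimates \cite{Epstein-Marden-87} used to show convergence of the bending cocycle carry over to a small strip of complex weights, and yield that the family $\{B_{L_k}\}$ is locally uniformly bounded on $\mathcal{U}$. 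Montel's theorem then produces a holomorphic limit $B_L$ on $\mathcal{U}$, whose restriction to $\TT$ agrees with $b_L$. Hence $b_L$ is the real restriction of a holomorphic map, so it is real-analytic.

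\textbf{Main obstacle.} The delicate point is justifying Step~2: verifying that the complexified multiloop bendings $B_{L_k}$ extend holomorphically and remain uniformly bounded on a single complex neighborhood of $\TT$ that does \emph{not} shrink with $k$. Concretely, one must bound the products of elliptic matrices in the complexified cocycle independently of the multiloop approximant, controlling the admissible size of $\Im(w_i)$ along each leaf in terms of compact data on $\TT$. This is exactly the regime where the Epstein--Marden convergence extends, but packaging the bookkeeping cleanly — especially when $L$ has irrational or dense leaves — is the technical heart of the argument.
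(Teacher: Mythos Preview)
Your overall architecture---handle multiloops by an explicit cocycle formula, approximate a general $L$ by multiloops, and pass to a holomorphic limit after complexifying---is exactly the paper's strategy. But Step~2 complexifies the wrong variable. You propose to let the weights $w_i$ range over a complex strip; holomorphy in the weight parameter, however, says nothing about holomorphy in $\tau$, and in particular does not produce a holomorphic map on a neighborhood $\mathcal{U}$ of $\TT$ inside $\rchi$ as your next sentence asserts. Your text is internally inconsistent at this point: ``complex weights'' and ``a complex neighborhood of $\TT$ in $\rchi$'' are two unrelated complexifications, and only the second one is useful here.

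The paper complexifies in the domain variable. Regard $\TT$ as the Fricke locus inside $\rchi$ and take a small open neighborhood $N$ of $\TT$ in the smooth part of $\rchi$. For a weighted multiloop $L$, the cocycle formula of your Step~1 makes sense verbatim for any $\rho \in N$: the $\rho$-holonomy of each loop of $L$ is hyperbolic, hence has a well-defined axis, and the elliptic rotation by the \emph{fixed real} angle $w_i$ about that axis depends holomorphically on $\rho$. Thus $b_L$ extends to a holomorphic map $N \to \rchi$ with no need to perturb the weights. For general $L$, the approximating maps $b_{L_k}\colon N \to \rchi$ converge uniformly on compacts of $N$ (the Epstein--Marden/Kerckhoff estimate, applied on $N$ rather than just on $\TT$), so the limit $b_L|_N$ is holomorphic and its restriction to the real-analytic submanifold $\TT$ is real-analytic.

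With this correction your ``main obstacle'' dissolves: the neighborhood $N$ is fixed once and for all, independently of $k$, since it is determined by the condition that simple-closed-curve holonomies remain hyperbolic near the Fuchsian locus. There is no strip of complex weights whose width must be controlled uniformly in $k$.
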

\begin{proof}(cf. \cite{Kerckhoff85Earthquakes}.)
We regard $\TT$ as the Fricke space, i.e. the space of discrete faithful representations into $\PSL(2, \R)$ up to conjugation by $\PSL_2\R$.
Then, take a small open neighborhood $N$ of $\TT$ whose closure is contained in $N_L$. 

If $L$ is a weighted multiloop,  the bending map $b_L$ is holomorphic on $N$ as bending transforms the holonomy along a loop by some elliptic elements in a holomorphic manner. 
In general, pick a sequence of weighted multiloops $M_i$ converging to $L$ as $i \to \infty$. 
By the injectivity of \Cref{Injectivity}, $\hat{b}_{M_i}\col N_{M_i} \to \chi$ is a holomorphic embedding. 
Then, the holomorphic embedding $\hat{b}_{M_i} \vert N$ converges uniformly to $b_L | N$ uniformly on compacts as $i \to \infty$. 
Therefore $\hat{b}_L \vert N$ is a holomorphic embedding. 

Since $\TT$ is a real-analytic submanifold of $N$ in $\rchi$, thus $b_L \vert \TT$ is a real-analytic embedding. 
\end{proof}

\section{Properness of the bending maps from the Teichmüller spaces}\Label{sRealBendingMapIsProper}
In this section, we prove the properness stated in \Cref{BendingTeich}.
\begin{theorem}\Label{RealPropermess}
Let $L \in \ML$. 
Then, the bending map $b_L\col \TT \to \rchi$ is proper if and only if  $L$ contains no leaf of weight $\pi$ modulo $2\pi$. 
\end{theorem}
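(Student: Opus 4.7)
The plan is to reduce both directions of the equivalence to \Cref{PiecewiseGeodesicCurve}: properness follows from a quasi-geodesic analysis of bent closed geodesics when no leaf has weight $\pi$ mod $2\pi$, while failure of properness in the opposite case is exhibited by an explicit pinching of the offending closed leaf, exploiting the involution identity $E^2 = \mathrm{id}$ for an elliptic half-turn.

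\textbf{Sufficient direction ($\Leftarrow$).} Suppose $L$ has no closed leaf of weight congruent to $\pi$ modulo $2\pi$. Let $\tau_i \in \TT$ be a sequence escaping every compact set. Combining Mumford compactness on $\TT / \mathrm{MCG}(S)$ with the Collar Lemma (a short curve forces its transverse curves to be long), after a subsequence we may select a simple closed curve $c$ with $\ell_{\tau_i}(c) \to \infty$. Let $c_i$ be its $\tau_i$-geodesic representative and $\beta_i \col \H^2 \to \H^3$ the pleated surface for $(\tau_i, L)$. A lift $\tilde c_i \subset \H^2$ is carried by $\beta_i$ to a piecewise geodesic in $\H^3$ that covers the translation axis of the $b_L(\tau_i)$-image of $c$. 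I would apply \Cref{PiecewiseGeodesicCurve} to this piecewise geodesic: after grouping break-points into chunks, the segment lengths are bounded below and the bending angles are bounded away from $0$ and from $\pi$ mod $2\pi$. The hypothesis prevents an angle of exactly $\pi$ at any atomic leaf; on a minimal sublamination, the transverse bending along a bounded arc has total mass at most $i(c, L) < \infty$, so infinitesimal break-points can be aggregated into effective segments. Then \Cref{PiecewiseGeodesicCurve} makes $\beta_i(\tilde c_i)$ a uniform quasi-geodesic, so the translation length of $b_L(\tau_i)(c)$ is comparable to $\ell_{\tau_i}(c) \to \infty$, forcing $b_L(\tau_i)$ to diverge in $\rchi$.

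\textbf{Necessary direction ($\Rightarrow$).} Suppose $\ell$ is a closed leaf of $L$ with weight $w \equiv \pi \pmod{2\pi}$. I would construct a diverging sequence $\tau_i$ with $b_L(\tau_i)$ bounded, by pinching $\ell$. Present $\pi_1(S)$ as an HNN extension (or amalgam, if $\ell$ separates) over $\langle \ell \rangle$, with stable letter $t$ represented by a simple closed curve meeting $\ell$ once. Take Fenchel--Nielsen coordinates with $\ell$ in the pants decomposition, let $\ell_{\tau_i}(\ell) \to 0$, and fix the remaining parameters. Then $b_L(\tau_i)$ agrees with $\rho_{\tau_i}$ on the subgroup $G'$ of the complement of $\ell$, and sends $t$ to $T_i E_i$, where $T_i = \rho_{\tau_i}(t)$ and $E_i$ is the half-turn about the axis of $\rho_{\tau_i}(\ell)$. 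Because $\ell$ is being pinched, the geodesic representative of $t$ in $\tau_i$ crosses $\ell$ asymptotically perpendicularly; diagonalizing $E_i$ and computing directly in $\SL_2\C$ gives $\tr(T_i E_i) \to 0$, so $T_i E_i$ stays in a bounded subset of $\PSL_2\C$ as $i \to \infty$. The restriction $\rho_{\tau_i}|_{G'}$ is also bounded (the lengths of curves not crossing $\ell$ are unchanged by the pinch), so the traces of a generating set of $\pi_1(S)$ under $b_L(\tau_i)$ remain uniformly bounded while $\tau_i$ leaves every compact set of $\TT$.

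\textbf{Main obstacle.} The principal technical difficulty lies in the sufficient direction when $|L|$ contains a minimal sublamination. The break-points of $\beta_i(\tilde c_i)$ are then densely distributed along $c_i$ with individually infinitesimal bending angles, so \Cref{PiecewiseGeodesicCurve} does not apply literally to the raw piecewise geodesic. One must cluster consecutive break-points into chunks whose cumulative bending is strictly less than $\pi$ and whose segment lengths are bounded below, uniformly in $i$, as the geodesic representative $c_i$ varies with $\tau_i$. Carrying this grouping through---using the no-$\pi$-weight hypothesis on atomic leaves together with the uniform bound $i(c, L) < \infty$ for the transverse measure---is the most delicate point.
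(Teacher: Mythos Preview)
Your necessary direction is essentially the paper's strategy: pinch a closed leaf of weight $\pi$ (keeping the other Fenchel--Nielsen parameters bounded) and show the bent holonomy stays bounded. The paper does this geometrically in \Cref{FenchelNielsenForPinchedLoop} rather than via the trace identity you sketch; your formula $b_L(\tau_i)(t)=T_iE_i$ and the assertion $\tr(T_iE_i)\to 0$ are not quite justified as written (in a normalization where $\rho_{\tau_i}|_{G'}$ converges, the axis of $\rho_{\tau_i}(\ell)$ degenerates to the parabolic fixed point, so $E_i$ itself diverges, and one must see a cancellation in the product), but the underlying pinching idea is correct and matches the paper.

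The sufficient direction, however, has a genuine gap, and it is precisely the obstacle you flag at the end. You take a curve $c$ with $\ell_{\tau_i}(c)\to\infty$ chosen only from divergence in $\TT$; nothing controls how $c$ meets $L$. Your clustering proposal needs, for each $i$, a subdivision of $\tilde c_i$ into arcs whose lengths are uniformly bounded below \emph{and} whose total transverse measure is bounded away from $\pi$. There is no reason this is possible: although $i(c,L)$ is fixed, as $\tau_i$ varies the geodesic representative $c_i$ may carry an arbitrarily large fraction of that measure on an arbitrarily short subarc (for an irrational minimal component the crossings form a Cantor set whose geometry depends on $\tau_i$), producing an effective fold close to $\pi$ between two long pieces, and \Cref{PiecewiseGeodesicCurve} then gives no control. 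Likewise $c$ may cross several periodic leaves of $L$ many times, and the composite elliptic at a cluster of such crossings need not have rotation angle bounded away from $\pi$.

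The paper avoids this entirely by \emph{choosing the test curves first}, adapted to $L$: \Cref{LengthCoordinatesForL} produces finitely many loops $\ell_1,\dots,\ell_n$ which (i) form length coordinates for $\TT$, (ii) have transversal measure with the non-periodic part of $L$ smaller than any prescribed $\upsilon$, and (iii) meet at most one periodic leaf of $M\subset L$, in at most two points. Divergence in $\TT$ forces $\ell_{\tau_i}(\ell_k)\to\infty$ for some $k$, and then the bent image of $\tilde\ell_k$ is handled in two regimes: where $\ell_k$ sees only the irrational part the total bending is $<\upsilon$ and one quotes the bilipschitz estimate for nearly-straight pleated arcs; the at-most-two crossings with a single periodic leaf of weight $\not\equiv\pi$ give break angles uniformly bounded away from $0$, and \Cref{PiecewiseGeodesicCurve} applies directly. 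The essential idea you are missing is this preselection of curves with small intersection with $L\setminus M$, which replaces the unworkable clustering by a situation where the bending along each long subarc is genuinely small.
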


First, we prove the sufficiency of the condition in Theorem \ref{RealPropermess}. 

\begin{lemma}\Label{LengthCoordinatesForL}
Fix $L \in \ML$ such that every closed leaf of $L$ contains no leaf of weight $\pi$ modulo $2\pi$. 
Let $M$ be the (possibly empty) sublamination of $L$ consisting of the periodic leaves of $L$. 
Then, for all $\upsilon > 0$
there are finitely many loops $\ell_1, \dots, \ell_n$ on $S$ such that
\begin{itemize}
\item the lengths of $\ell_1, \dots, \ell_n$  form  length parameters of $\TT$, and 
\item for each $i = 1, \dots, n$, 
\begin{itemize}
\item the transversal measure $(L \minus M) (\ell_i) < \upsilon$,  and 
\item $\ell_i$ intersects at most one leaf of  $M$, and their intersection number is at most two. 
\end{itemize}
\end{itemize}
\end{lemma}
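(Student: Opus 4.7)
The plan is to build the family $\{\ell_1,\dots,\ell_n\}$ in two stages: first produce a pants decomposition of $S$ containing the periodic sublamination $M$ whose remaining curves have very small transverse measure against $L' := L \setminus M$; then complete it to a length-coordinate system by adjoining dual curves chosen to satisfy the small-measure and restricted-intersection-with-$M$ conditions simultaneously.

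Stage one. Cutting $S$ along $M$ gives subsurfaces $\Sigma_1,\dots,\Sigma_k$, and $L'$ restricts to each $\Sigma_j$ as a (possibly empty) measured lamination with no closed leaves. On each $\Sigma_j$ where $L'|_{\Sigma_j}$ is nontrivial, I would carry it on a train track $\tau_j$ whose transverse branch weights are all less than $\upsilon/N$ for a large integer $N$ bounding the combinatorial length of the curves to be chosen. Since $L'$ has no atoms and $i(L',L')=0$, weighted simple closed curves approximating $L'$ in $\ML$ have vanishing transverse $L'$-measure. Choosing $3g(\Sigma_j)+|\partial\Sigma_j|-3$ disjoint simple closed curves carried by $\tau_j$ that together form a pants decomposition of $\Sigma_j$, each such curve has $L'$-measure at most the sum of the weights of its traversed branches, which can be forced below $\upsilon$. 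Combining these curves with the leaves of $M$ yields a pants decomposition $\mathcal{P}$ of $S$ in which every curve either is a leaf of $M$ or has $L'$-measure below $\upsilon$.

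Stage two. I would promote $\mathcal{P}$ to length coordinates by adjoining, for each pants curve $\alpha \in \mathcal{P}$, a dual curve $\alpha^*$ in the standard Fenchel--Nielsen fashion: $\alpha^*$ meets $\alpha$ once or twice and is disjoint from every other pants curve, and it can be realized inside a regular neighborhood of the union of pants meeting $\alpha$. If $\alpha$ is a leaf of $M$, then $\alpha^*$ crosses $M$ only at $\alpha$, at most twice, as required. If $\alpha$ is not in $M$, then $\alpha^*$ lies in the corresponding $\Sigma_j$ and is disjoint from $M$ entirely. To ensure $L'(\alpha^*)<\upsilon$ in this latter case, I would require $\alpha^*$ to be carried by a refinement of $\tau_j$ adapted to the local dual-arc structure, so that the same small-branch-weight bound applies. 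The pants curves and duals together form the list $\ell_1,\dots,\ell_n$; their lengths give real-analytic coordinates on $\TT$ by the classical $9g-9$ theorem (equivalently, by converting Fenchel--Nielsen coordinates through the twist-length formulas), and the $L'$-measure bound and the intersection-with-$M$ condition hold by construction, case by case.

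Main obstacle. The heart of the argument is the small-measure pants decomposition in stage one: carrying a minimal filling $L'$ on a train track with small branch weights and extracting disjoint pants curves with controlled weight. The ingredients are the existence of train tracks carrying $L'$ with arbitrarily small transverse weights and the fact that a sufficiently subdivided train track hosts a large supply of disjoint simple closed subcurves; combining these to realize a full pants decomposition whose curves have \emph{simultaneously} small $L'$-measure is where the technical care lies, since the combinatorial length of the chosen curves must be bounded against the branch-weight bound. The parameter $R$ does not enter the listed conclusions, but I read it as a lower length bound for the loops on a reference hyperbolic structure, to be used in subsequent bending estimates; this is accommodated by further subdividing $\tau_j$ to force the combinatorial, hence hyperbolic, length of each selected curve to exceed $R$.
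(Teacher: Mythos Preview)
Your two-stage plan---build a pants decomposition $\mathcal{P}$ containing $M$ whose remaining curves have small $L'$-measure, then adjoin Fenchel--Nielsen dual curves---is exactly the paper's strategy, and your handling of the intersection-with-$M$ condition is fine. The gap is in your mechanism for bounding $L'(\ell_i)=i(\ell_i,L')$.

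You assert that a simple closed curve $c$ carried by $\tau_j$ has $L'$-measure at most the sum of the $L'$-weights of the branches it traverses. This is false: that sum is a tangential quantity, not the transverse intersection $i(c,L')$. On a once-punctured torus with the standard two-branch track, the curve with branch weights $(p,q)$ and the lamination with weights $(a,b)$ have $i=|pb-qa|$, which is not bounded by $pa+qb$; take $(p,q)=(100,1)$ and $(a,b)=(10^{-3},10^{-2})$ to see the inequality fail by an order of magnitude. So small branch weights plus bounded combinatorial length do not force small $L'$-measure for carried curves, and the same objection applies to your Stage-two request that the duals $\alpha^\ast$ be carried by a refinement of $\tau_j$. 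There is a secondary issue as well: it is not clear that a full pants decomposition of $\Sigma_j$ can be realized by curves carried on $\tau_j$ with uniformly bounded combinatorial length.

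The paper bypasses train tracks. Once $P$ is chosen so that every pants curve is either in $M$, disjoint from $L$, or has $L$-measure $<\delta$, the key (and elementary) observation is that in any pair-of-pants component $Q$ of $S\setminus P$, every properly embedded arc can be isotoped rel $\partial Q$ to have $L$-measure $<3\delta$, since there are only finitely many arc types in a pair of pants and each is controlled by the boundary measures. The dual loops are then unions of at most two such arcs, giving $(L\setminus M)(\ell_i)<\upsilon$ once $\delta$ is small. Replacing your train-track bound with this arc-in-pants estimate fixes the proof. (Your guess about the unused parameter $R$ is harmless; it plays no role in the paper's argument either.)
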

\begin{proof}

For every $\del > 0$, 
there is a {\sf pants  decomposition} $P = P_\del$ (i.e. a maximal multiloop) on $S$ consisting of 
\begin{itemize}
\item the loops of $M$,  
\item loops which are disjoint from $L$, 
\item loops $\ell$ with  $L(\ell) < \del$ (so that $\ell$ is a good approximation of  a minimal irrational sublamination of $L$). 
\end{itemize} 
By the third condition, if $R$ is a component of $S \minus P$, and $\alpha$ is an arc on $R$ with endpoints on $\partial R$, then there is an isotopy of $\alpha$ keeping its endpoints on $\partial R$ such that $L(\alpha) < 3 \del$. 
Therefore, if $\del > 0$ is small enough, for each loop $m$ of $P$, we can take two loops $m_1, m_2$ such that 
\begin{itemize}
\item $m_i$ intersects $m$ at a point or two, and it does not intersect any other loop of $P$, and
\item $(L \minus M) (m_i) < \upsilon$. 
\end{itemize}
Then we obtain a desired set of loops by adding two such loops for all loops of $M$.
(For length coordinates of $\TT$, see \cite[Theorem 10.7]{FarbMarglit12}  for example.)\end{proof}
 
\proof[Proof of the sufficiency of  \Cref{RealPropermess}]
For $\ep > 0$, let $\ell_1, \dots, \ell_n$ be the set of loops given by \Cref{LengthCoordinatesForL}.
Let $\tau_i$ be a sequence in $\TT$ which leaves every compact subset. 
Then,  for some $1 \leq k \leq n$, $\length_{\tau_i} \ell_k \to \infty$ as $i \to \infty$ up to a subsequence. 

\begin{claim}
For every $\ep > 0$, if $\del > 0$ is sufficiently small, then  
\begin{enumerate}
\item if $L(\ell_k) < \del$, then $\beta_i | \ti{\ell}_k$ is a $( 1 + \ep)$-biLipschitz embedding for sufficiently large $i$, and  \Label{iLoopsWithSmallTransversalMeasure}
\item if $\ell_k$ intersects a loop $m$ of $M$, then $\beta_i | \ti\ell_k$ is a $(1 + \ep, Q)$-quasi-isometric embedding for all sufficiently large $i$, where $Q$ only depends on the weight of $m$.   \Label{iLoopsInversctingPeriodicLeaves}
\end{enumerate}
\end{claim}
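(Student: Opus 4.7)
The plan is to view each $\beta_i|\tilde\ell_k$ as a piecewise geodesic in $\H^3$ and check the hypotheses of Lemma \ref{AlmostGeodesicPieacewiseGeodesic} for (1) and Proposition \ref{PiecewiseGeodesicCurve} for (2). First I would reduce to the weighted multiloop case by approximating $L$ by weighted multiloops $L_n \to L$ in $\ML$ with $L_n(\ell_k) \to L(\ell_k)$, chosen so that in case (2) the atomic part of $L_n$ along $m$ agrees with that of $L$. The associated bending maps $\beta_i^{(n)}$ converge uniformly on compact sets to $\beta_i$, so it suffices to establish uniform (in $n$) bilipschitz and quasi-isometric estimates on $\beta_i^{(n)}|\tilde\ell_k$, a curve with only finitely many bending points per fundamental domain.

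For (1), one fundamental domain of $\beta_i^{(n)}|\tilde\ell_k$ has arc length $\length_{\tau_i}(\ell_k) \to \infty$ and exterior bending angles summing to at most $L_n(\ell_k) < \del$. Individual segments may be short because bending points can cluster, so Lemma \ref{AlmostGeodesicPieacewiseGeodesic} cannot be invoked directly. I would therefore \emph{coarsen} the curve, mimicking the construction of $c_r$ in the proof of Proposition \ref{PiecewiseGeodesicCurve}: consolidate consecutive bending points lying within a short distance $R_0$ of each other, and replace each cluster, together with short overhangs, by the geodesic segment joining its outer endpoints. Since the total bending inside each cluster is at most $\del$, the coarsened curve has segments of length greater than $R_0$ and angles deviating from $\pi$ by at most a constant multiple of $\del$. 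Lemma \ref{AlmostGeodesicPieacewiseGeodesic} then gives a $(1+\ep/2)$-bilipschitz estimate for the coarsened curve, and since the coarsened and original curves differ by $O(\del)$ while fundamental domains become arbitrarily long, $\beta_i^{(n)}|\tilde\ell_k$ is $(1+\ep)$-bilipschitz for sufficiently large $i$.

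For (2), the bending per fundamental domain decomposes into at most two bendings from the leaf $m$ of $M$, each of angle $w_m \not\equiv 0, \pi \pmod{2\pi}$ and hence bounded away from $0$ and $\pi$ by a constant $\ep' = \ep'(w_m)$, plus bendings from $L \setminus M$ of total angle at most $\upsilon$. Applying the same coarsening procedure to the $L \setminus M$ bendings only, the coarsened curve has segments of length comparable to $\length_{\tau_i}(\ell_k) \to \infty$ separated by bendings of angle close to $\pi - w_m$, hence bounded away from $0$ by $\ep'$. Proposition \ref{PiecewiseGeodesicCurve} then produces the $(1+\ep, q)$-quasi-isometric embedding with $q$ depending only on $w_m$.

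The main obstacle is the coarsening estimate: I must show quantitatively that a piecewise geodesic in $\H^3$ whose cumulative bending on a short cluster is at most $\del$ stays within $O(\del)$ of the geodesic joining its outer endpoints, with the deviation controlled by the total bending rather than by the number of bending points, and that this small deviation does not accumulate destructively over the infinitely many fundamental domains of $\tilde\ell_k$. This is a local quantitative-geometry calculation that follows from the standard hyperbolic triangle inequalities, but ensuring the constants are uniform in $i$ and $n$ is where the technical weight of the argument lies.
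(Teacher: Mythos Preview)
Your approach is essentially the paper's: for (2) you break $\tilde\ell_k$ at its intersections with $m$, observe that each arc has small total bending from $L\setminus M$ and hence stays Hausdorff-close to the geodesic segment between its endpoints, and then apply Proposition~\ref{PiecewiseGeodesicCurve} to the resulting piecewise geodesic whose angles are bounded away from $0$ because $w_m\not\equiv\pi\pmod{2\pi}$; for (1) the paper simply cites \cite[Lemma~5.3]{Baba-10}, which is based on \cite[I.4.2.10]{CanaryEpsteinGreen84}. Your multiloop approximation step is unnecessary---the paper works directly with $L$---but otherwise the arguments coincide.
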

\begin{proof}

(\ref{iLoopsWithSmallTransversalMeasure})
See
\cite[Lemma 5.3]{Baba-10}, which was proved based on \cite[I.4.2.10]{CanaryEpsteinGreen84}.

(\ref{iLoopsInversctingPeriodicLeaves})
   We straighten $\ell_k$ and $M$ on $\tau_i \in \TT$. 
From \Cref{LengthCoordinatesForL}, $\ell$ intersects only one loop $m$ of $M$, and their (geometric) intersection number is one or two. 
  We thus assume that $\ell_k \cap m$ consists of two points $x_1, x_2$ --- the proof when the intersection number is one is similar. 
Then $x_1$ and $x_2$ decompose $\ell_k$ into 2 geodesic segments $a_1$ and $a_2$. 
Since $\length_{\tau_i} \ell_k \to \infty$,  the lengths of $a_1$ and $a_2$ both go to $\infty$ as well. 
Let $\ti\ell_k$ be the geodesic in $\H^2$ obtained by lifting $\ell_k$ to the universal cover. 
Let $\ti{a}_j$ be a lift of $a_j$ to $\ti\ell_k$, and let $\ti{x}_j$ and $\ti{x}_{j + 1}$ be its endpoints. 
For every $\ep' > 0$, if $\upsilon > 0$, is sufficiently small, then $\beta_i(\ti{a}_j)$ is $\ep'$-close to the geodesic segment $[\beta_i x_j, \beta_i x_{j + 1}]$ connecting its endpoints $\beta_i x_j$ and $\beta_i x_{j + 1}$ in the Hausdorff metric. 
Since every periodic leaf of $L$ has weight not equal to $\pi$ modulo $2\pi$, there is $\omega > 0$ such that, for every periodic leaf $\ell$ of $L$, the distance from the weight of $\ell$ to the nearest odd multiple of $\pi$ is at least $\omega$. 
Therefore, if $\del  > 0$ is sufficiently small, then 
the angle between $[\beta_i x_j, \beta_i x_{j + 1}]$ and $[\beta_i x_{j -1}, \beta_i x_j]$ at $x_j$ is  at least $\omega/2$.
Let $c_i$ be the piecewise geodesic in $\H^3$ which is a union of the geodesic segments  $[\beta_i x_j, \beta_i x_{j + 1}]$ over all lifts $\ti{a}_1, \ti{a}_2$ of $a_1, a_2$ to  $\ti\ell_k$.
Then $c_i$ is $\ep'$-Hausdorff close to $\beta_i \ti\ell_k$. 
Therefore, by \Cref{PiecewiseGeodesicCurve}, we see that $c_i$ is a $(1+ \ep, Q)$-quasi-geodesic. 
\end{proof}
By this claim, for large $i$, the holonomy of $b_L (\tau_i)$ along $\ell_k$ is hyperbolic, and its translation length diverges to $\infty$ as $i \to \infty$. 
Thus $b_L(\tau_i)$ leaves every compact set in $\rchi$. 
Thus we have proven the properness. 
\Qed{RealPropermess}


\section{Characterization of non-properness}
In this section, we explicitly describe how $b_L\col \TT \to \rchi$ is non-proper when the condition in \Cref{RealPropermess} fails.
Let $L$ be a measured lamination on $S$. 
Let $m_1, \dots, m_p$ be the periodic leaves of $L$ which have weight $\pi$ modulo $2\pi$.
Then, set $M = m_1 \sqcup \dots \sqcup m_p$.
Pick any  pants decomposition $P$ of $S$ which contains  $m_1, \dots, m_p$. 
Consider the Fenchel-Nielsen coordinates of $\TT$ associated with $P$. 
Recall that its length coordinates take values in $\R_{>0}$ and its twist coordinates in $\R$.

\begin{theorem}\Label{NonpropernessOfBendingTeichmullerSpaces}
Let $\tau_i$ be a sequence in  $\TT$ which leaves every compact subset. 
Then
$b_L(\tau_i)$  converges in $\rchi$ if and only if
\begin{itemize}
\item  $\length_{\tau_i} m_j \to 0$ for some $j \in \{1, \dots, p\}$ as $i \to \infty$ ({\it pinched}), and
\item the Fenchel-Nielsen coordinates of $\tau_i$  w.r.t. $P$ converge in their parameter spaces as $i \to \infty$, except that the length parameters of the pinched loops go to zero. 
\end{itemize}
\end{theorem}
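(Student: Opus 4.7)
The plan is to prove both directions by tracking how the traces of $b_L(\tau_i)$ along a suitable generating set of $\pi_1(S)$ depend on the Fenchel--Nielsen coordinates with respect to $P$. The key numerical input is the $\PSL_2\C$-trace formula: a loxodromic element with translation length $\ell$ and rotation angle $\theta$ has trace (up to sign) $2\cosh((\ell + i\theta)/2)$. For $\theta \equiv \pi \pmod{2\pi}$ this specializes to $2i\sinh(\ell/2)$, which tends to $0$ as $\ell\to 0$; for other $\theta$ it tends to $2\cos(\theta/2)\ne 0$; and in all cases the trace diverges as $\ell\to\infty$. Consequently, along each $m_j$, the bent holonomy converges to the order-$2$ elliptic rotation about $\tilde m_j$ precisely in the pinching limit $\length_{\tau_i}(m_j)\to 0$.

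For the sufficiency direction, assume the two listed conditions. After a subsequence, stabilize the nonempty subset $J\subseteq\{1,\dots,p\}$ of pinched indices; set $M'=\sqcup_{j\in J} m_j$ and fix a generating set of $\pi_1(S)$ adapted to $P$. For generators disjoint from $M'$, their bent holonomies depend continuously on the converging Fenchel--Nielsen parameters and on $L$, hence converge. For each pinched $m_j$, the bent holonomy along $m_j$ converges to an elliptic involution by the trace formula, and the axis $\tilde m_j\subset\H^3$ has a well-defined limit position controlled by the converging coordinates of the surrounding loops of $P$. For a generator crossing some $m_j$ with $j\in J$, I would decompose it at its intersections with $M'$ and express the bent holonomy as an alternating product of pieces (each converging, by the unpinched case) and bending elements (each converging to an involution), concluding convergence in $\PSL_2\C$ and hence in $\rchi$.

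For the necessity direction, suppose $\tau_i\to\infty$ in $\TT$ while $b_L(\tau_i)$ converges; pass to a subsequence stabilizing the asymptotics of every Fenchel--Nielsen coordinate and exclude, one by one, each forbidden divergence. \emph{A length going to $\infty$}: for the loop $\ell_k$ of $P$ in question, the bent trace along $\ell_k$ diverges directly from the trace formula. \emph{A twist going to $\pm\infty$}: for a loop $\gamma$ crossing the twisted curve once, the Fenchel--Nielsen trace identity shows the bent trace depends on the twist via a term proportional to $\sinh(\ell_k^i/2)\cdot t_k^i$, hence diverges. \emph{A length of a non-$M$ loop going to $0$}: a loop $\gamma$ crossing $\ell_k$ once has diverging bent trace; this reduces to a pair-of-pants trace computation showing that the pinching degeneration of the unbent Fuchsian trace is absorbed by the intervening bending rotation of angle $w$ precisely when $w\equiv\pi\pmod{2\pi}$. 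Ruling out these cases leaves only the pinching of some $m_j\in M$, which must occur because $\tau_i$ diverges, and forces every other coordinate to converge.

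The main obstacle is the third case of the necessity direction, namely establishing the algebraic trace cancellation that happens only for $w\equiv\pi$. I would carry this out by an explicit calculation on a pair of pants containing $\ell_k$, writing the bent trace of a crossing loop as a function of $(\ell_k^i, t_k^i, w)$ via the standard $\SL_2$ trace identities together with the observation (from \S\ref{sBendingDeformation}) that bending inserts a rotation by $w$ about $\tilde\ell_k$ at each crossing; boundedness of this function as $\ell_k^i\to 0$ then forces $w\equiv\pi\pmod{2\pi}$, meaning $\ell_k\in M$.
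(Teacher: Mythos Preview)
Your argument rests on a misidentification of the bent holonomy along the leaves $m_j$. Since each $m_j$ is a leaf of $L$, bending along $L$ does \emph{not} introduce any rotation into $b_L(\tau)(m_j)$: the bending cocycle is a rotation about the axis of $\rho_\tau(m_j)$ and therefore commutes with it, so $b_L(\tau)(m_j)=\rho_\tau(m_j)$ remains purely hyperbolic with trace $2\cosh(\tfrac12\length_\tau m_j)$ and rotation angle $0$, not $w_j$. As $\length_{\tau_i}(m_j)\to 0$ this converges to a genuine parabolic, not to an order-$2$ elliptic. (Indeed the paper's own argument records exactly this: in the pinching case ``the holonomy of $m$ converges to a parabolic element.'') This collapses your sufficiency argument: in your alternating product $A_1E_1A_2E_2\cdots$, the bending rotations $E_k$ (elliptics of angle $\pi$ about the degenerating axis $\tilde m_j$) diverge in $\PSL_2\C$ as the two fixed points coalesce, and the arc holonomies $A_k$ across a pinching pair of pants also diverge. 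The product does converge, but only through a cancellation you have not established; asserting that each factor converges separately is false. The same misreading undermines your necessity case ``a non-$M$ length going to $0$'': the trace of $b_L(\tau_i)(\ell_k)$ for $\ell_k\in P$ is not governed by the formula $2\cosh((\ell+iw)/2)$ in the way you suggest.

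The paper avoids all trace computations and instead argues geometrically with the pleated surface $\beta_i\colon\H^2\to\H^3$. The point is that bending by $\pi$ about $\tilde m$ sends the half-plane on the $F$-side into the \emph{same} totally geodesic plane as the $E$-side (a $\pi$-rotation about a geodesic in a plane returns that plane to itself). Concretely, a unit normal ray $r_i\subset F_i$ orthogonal to $\tilde m_i$ has $d\beta_i(v)$ tangent to $\beta_i(E_i)$, so $\beta_i\vert F_i$ is determined, once $\beta_i\vert E_i$ is normalised to converge, by the position of this tangent direction along $\tilde m_i$ --- which is exactly the Fenchel--Nielsen twist. This yields \Cref{FenchelNielsenForPinchedLoop}: given convergence of the hyperbolic structures on the two adjacent pieces, $\xi_i\vert\Gamma_F$ converges if and only if the twist along $m$ converges, in both the pinched and unpinched cases. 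The theorem then follows by first observing (via the properness result of \S5 applied to each component of $S\setminus M$, where $L$ has no $\pi$-weight leaves) that convergence of $b_L(\tau_i)$ forces $\tau_i\vert F$ to converge on every such component, and then applying the proposition across each $m_j$.
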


\proof[Proof of Theorem \ref{NonpropernessOfBendingTeichmullerSpaces}]

Let $F$ be a component of $S \minus M$.
Then $b_L (\tau_i) | F$ converges in $\rchi(F)$ if and only if $\tau_i | F \coloneqq \tau_i \vert \pi_1(F)$ converges. 

Let $E$ and $F$ be adjacent components of $\ti{S} \minus \ti{M}$. 
Let $\ti{m}$ be the component of $\ti{M}$ separating $E$ and $F$, and let $m$ be the loop of $M$ which lifts to $\ti{m}$. 
Let $\Gamma_E$ and $\Gamma_F$ be the subgroups of $\pi_1(S)$ preserving $E$ and $F$, respectively.
Then $E/\Gam_E$ and $F / \Gam_F$ are the components of $S\minus M$; let  $S_E = E/\Gam_E$ and $S_F = F / \Gam_F$.

\begin{proposition}\Label{FenchelNielsenForPinchedLoop}
Let $\tau_i$ be a sequence in $\TT$, such that the restrictions of $\tau_i$ to $S_E$ and to $S_F$ converge in their respective Teichmüller spaces as $i \to \infty$. 
Pick, for each $i$,  a representative $\xi_i\col \pi_1(S) \to \PSL_2\C$ of $b_L(\tau_i) \in \rchi$ so that $\xi_i | \Gam_E$ converges. 
Then, 
the restriction $\xi_i | \Gam_F$ converges if and only if the Fenchel-Nielsen twist parameter along $m$ converges as $i \to \infty$. 
\end{proposition}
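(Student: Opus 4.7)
My plan is to decompose the restriction $\xi_i|\Gam_F$ into a conjugation by a translation $T_{t_i}$ along $\ti m$ of a sequence that converges \emph{unconditionally} in $i$, and then read off convergence of $\xi_i|\Gam_F$ from the behaviour of $t_i$. The key algebraic input is that the bending rotation by $\pi$ around $\ti m$ and the twist translation along $\ti m$ both lie in the abelian stabilizer of $\ti m$ in $\PSL_2\C$, and hence commute. This lets the twist be extracted as an outer conjugation that is independent of the other bending data.

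Concretely: after the normalization forcing $\xi_i|\Gam_E$ to converge, the image of $\ti m$ under the $E$-side developing map converges to a geodesic $\ti m_\ast \subset \H^3$. Let $R_i$ denote rotation by $\pi$ around the $i$-th image of $\ti m$ (so $R_i \to R_\ast$, rotation by $\pi$ around $\ti m_\ast$), and let $T_{t}$ denote hyperbolic translation along $\ti m_\ast$ by $t \in \R$. Unpacking the bending construction through $F$ in three steps, one builds $\xi_i|\Gam_F$ from a base representation $\eta_i$ of $\Gam_F$ given by bending $(\tau_i|S_F, L\cap S_F)$ in the developing picture aligned with the $E$-side across $\ti m$ at twist zero: first conjugate by $T_{t_i}$ to install the FN twist $t_i$, then conjugate by $R_i$ to perform the bending across $m$. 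Since $R_i$ commutes with $T_{t_i}$ for each $i$, this rearranges:
\[
\xi_i\big|\Gam_F \;=\; R_i\,T_{t_i}\,\eta_i\,T_{t_i}^{-1}\,R_i^{-1} \;=\; T_{t_i}\,\xi_i^\sharp\,T_{t_i}^{-1}, \qquad \xi_i^\sharp := R_i\,\eta_i\,R_i^{-1}.
\]
Since $\tau_i|S_F$ converges in $\TT(S_F)$ and the bending construction is continuous in the hyperbolic structure (\S\ref{sBendingDeformation}), $\eta_i$ converges; combined with $R_i \to R_\ast$, the sequence $\xi_i^\sharp$ converges to a faithful representation $\xi_\infty^\sharp\col \Gam_F \to \PSL_2\C$ with non-abelian image.

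The ``if'' direction is now immediate: $t_i \to t_\infty \in \R$ gives $T_{t_i} \to T_{t_\infty}$ and hence $\xi_i|\Gam_F \to T_{t_\infty}\,\xi_\infty^\sharp\,T_{t_\infty}^{-1}$. For the converse, suppose $t_i$ does not converge in $\R$; after passing to a subsequence, either $|t_i|\to\infty$ or $t_i$ has two distinct accumulation values $s_1 \neq s_2 \in \R$. Because $\xi_\infty^\sharp(\Gam_F)$ is non-abelian, it is not contained in the (abelian) stabilizer of $\ti m_\ast$, so some $g \in \Gam_F$ has $\xi_\infty^\sharp(g)$ not commuting with $T_t$ for any $t \neq 0$. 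In the first case, $T_{t_i}\,\xi_i^\sharp(g)\,T_{t_i}^{-1}$ leaves every compact of $\PSL_2\C$; in the second, the two subsequential limits of $T_{t_i}\,\xi_i^\sharp(g)\,T_{t_i}^{-1}$ differ by conjugation by the non-trivial element $T_{s_2-s_1}$ on the non-central image $\xi_\infty^\sharp(g)$. Either way, $\xi_i(g)$ fails to converge. The main difficulty I anticipate is formalizing the base representation $\eta_i$ --- specifically, pinning down a twist-zero, bending-zero normalization of the developing map on $F$ which varies continuously with $i$, so that the three contributions (intrinsic bending of $F$, twist at $m$, bending across $m$) separate cleanly in the order claimed above.
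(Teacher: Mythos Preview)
Your decomposition $\xi_i|\Gam_F = T_{t_i}\,\xi_i^\sharp\,T_{t_i}^{-1}$ is clean and works when $\length_{\tau_i} m$ stays bounded away from zero, but it breaks down in the pinching case $\length_{\tau_i} m \to 0$, which is the case the proposition is really about (it is invoked in the proof of \Cref{NonpropernessOfBendingTeichmullerSpaces} precisely for the pinched loops of $M'$). Your first step already fails there: you assert that ``the image of $\ti m$ under the $E$-side developing map converges to a geodesic $\ti m_\ast \subset \H^3$,'' but when $m$ pinches, $\xi_i(m)$ converges to a non-trivial \emph{parabolic} element and the axis $\ti m_i$ degenerates to the parabolic fixed point, not to a geodesic. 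Consequently the translations $T_{t_i}$ along $\ti m_i$ do not converge in $\PSL_2\C$ even when $t_i \to t_\infty \in \R$ (a hyperbolic element of fixed translation length whose two fixed points coalesce leaves every compact of $\PSL_2\C$), so your ``if'' direction collapses. The convergence of the twist-zero base $\eta_i$ is equally suspect, since its normalization is pinned to the same degenerating axis; the two divergences may well cancel, but your argument gives no mechanism for that cancellation. A minor related point: you define $T_t$ along the \emph{limit} axis $\ti m_\ast$ but $R_i$ along the $i$-th axis, so the commutation $R_i T_{t_i} = T_{t_i} R_i$ is not literally true as written, though this is easily repaired in the non-pinching case.

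The paper's proof avoids this difficulty by not trying to factor out the twist as a conjugation. Instead it chooses, for each $i$, a geodesic ray $r_i$ in $F_i$ orthogonal to $\ti m_i$ and disjoint from $\ti L$, and tracks its bent image $\beta_i(r_i)$. The crucial observation is that because the weight at $m$ is $\pi$, the tangent vector $d\beta_i(v)$ at the foot of $r_i$ lies \emph{tangent to the $E$-side plane} $\beta_i(E_i)$. Convergence of $\xi_i|\Gam_F$ is then equivalent to convergence of the single geodesic $\beta_i(r_i)$, and this is read off directly: in the non-pinching case the foot of $r_i$ moves along the limit geodesic $\ti m_\ast$ according to the twist; in the pinching case $\beta_i(r_i)$ limits to a geodesic emanating from the parabolic fixed point, and which such geodesic is again governed by the twist parameter. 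This geometric marker survives the degeneration of the axis, which your algebraic conjugation does not.
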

\begin{proof}
In order to measure a twist parameter along $m$, pick pairs of pants $P_E, P_F$ of a pants decomposition of $S$ containing $m$, so that $P_E, P_F$ lift to regions in $\ti{S}$ that are adjacent along $\ti{m}$. 
Then let $a$ be a simple arc on $P_E$  connecting $m$ to another boundary component of $P_E$. 
Similarly, let $b$ be an arc on $F$ connecting $m$ to another boundary component of $P_F$.

For each $i$, let $\beta_i \col \H^2 \to \H^3$ be the bending map for $(\tau_i, L)$ equivariant via $\xi_i$, so that $\beta_i$ converges on $E$ as $i \to \infty$.
Let $M_{\tau_i}$ be the geodesic representative of $M$ on $\tau_i$, and let $\ti{M}_{\tau_i}$ be the $\pi_1(S)$-invariant total lift of $M_{\tau_i}$ to the universal cover $\H^2$ of $\tau_i$.
Let $\ti{m}_i$ be the component of $\ti{M}_{\tau_i}$ corresponding to $\ti{m}$.
Let $F_i, E_i$ be the region on $\ti{\tau}_i \minus \ti{M}_{\tau_i}$ corresponding to $F$ and $E$, respectively.

    Pick a (topological) lift $\ti{a}$ of $a$ to the universal cover $\ti{E}$ whose endpoint is on $\ti{m}$.
     For each $i$, let $a_i$ be the corresponding geodesic representative of $\ti{a}$ on the $E_i$ with an endpoint on $\ti{m}_i$.
 Similarly, pick a lift $\ti{b}$ of $b$ to $\ti{F}$ whose endpoint is on $\ti{m}$.
Let $b_i$ be the geodesic representative of $\ti{b}$ on $F_i$ whose endpoint is on $\ti{m}_i$.
Then the twist coordinate of $\tau_i$ along $m$ of the pants decomposition is the signed distance between the base points of the geodesic arcs $a_i$ and $b_i$ on $\ti{m}_i$ divided by the length of $m_i$.  

Let $v_i$ be the unit tangent vector of $b_i$ at the base point on $\ti{m}_i$. 
Since the weight of $\ti{m}_i$ is $\pi$ modulo $2\pi$,  $d \beta_i (v_i)$ is tangent to $\beta_i (E_i)$ at a point of $\ti{m}_i$; see \Cref{fTangentVectorAtPiBending}, Left. 
(Suppose,  against the hypothesis, that the weight of $m_i$ is not $\pi$ modulo $2\pi$ and $\length_{\tau_i} m_i \to 0$.
Let $\alpha_i \in \pi_1(S)$ represent $m_i$ fixing $\ti{m}_i$.
Then, by the hypothesis of the weight,  the totally geodesic hyperbolic plane in $\H^3$ containing $\beta_i(F_i)$ transversally intersects the totally geodesic hyperbolic plane containing $\beta_i(E_i)$ along the geodesic $\beta_i(\ti{m}_i)$ at the constant angle, the weight of $m$,  independent of $i$.
 Therefore $\beta_i(F_i)$ must diverge to the parabolic fixed point of the limit of  the hyperbolic element $b_L (\tau_i) \alpha_i$ as $i \to \infty$; therefore $b_L(\tau_i)$ diverges to infinity, which contradicts the other hypothesis.)

First suppose that $\lim_{i \to \infty} \length_{\tau_i} m$ is positive.  
The twist parameter is given by the distance between $\beta_i(a_i)$ and $\beta_i(b_i)$ along the geodesic $\beta_i(\ti{m}_i)$ divided by the length of $m_i$.
Therefore $\xi_i \vert \Gamma_F$ converges if and only if $\beta_i(b_i)$ converges, which is equivalent to saying the twisting parameter of $m$ converges in $\R$ as $i \to \infty$. 

Next suppose that $\lim_{i \to \infty} \length_{\tau_i} m$ is zero.   
Since $\tau_i$ converges to a hyperbolic surface with cusps and the discrete representation $\xi_i \vert \Gamma_E$ remains discrete in the limit as $i \to \infty$,  the holonomy along $m$ must converge to a parabolic element. 
Then the holonomy of $m$ converges to a parabolic element different from the identity.  

Then, since $\xi_i | \Gam_E$ converges as $i \to \infty$, thus $\xi_i \vert \Gamma_F$ converges, if and only if $\beta_i(v_i)$ converges to a tangent vector starting from the parabolic fixed point. 
The hyperbolic element $b_L(\tau_i) m$ converges to the parabolic element.
Therefore we can take a sequence of the hyperbolic one-parameter subgroups in $\PSL_2 \C$ containing $b_L(\tau_i) m$ parameterized by the twist coordinate so that the subgroup converges to the one-parameter parabolic subgroup of $\PSL_2\C$ containing the parabolic element $ \lim_{i \to \infty} \beta_L(\tau_i) m$, so that this parabolic element acts as the unit-length translation on the limit twisting parameter $\R$ along $m$. 
0
Therefore, the above convergence is equivalent to saying the twisting parameter of $m$ converges as $i \to \infty$ (\Cref{fTangentVectorAtPiBending}, Right). 
\begin{figure}
\begin{overpic}[scale=.03
]{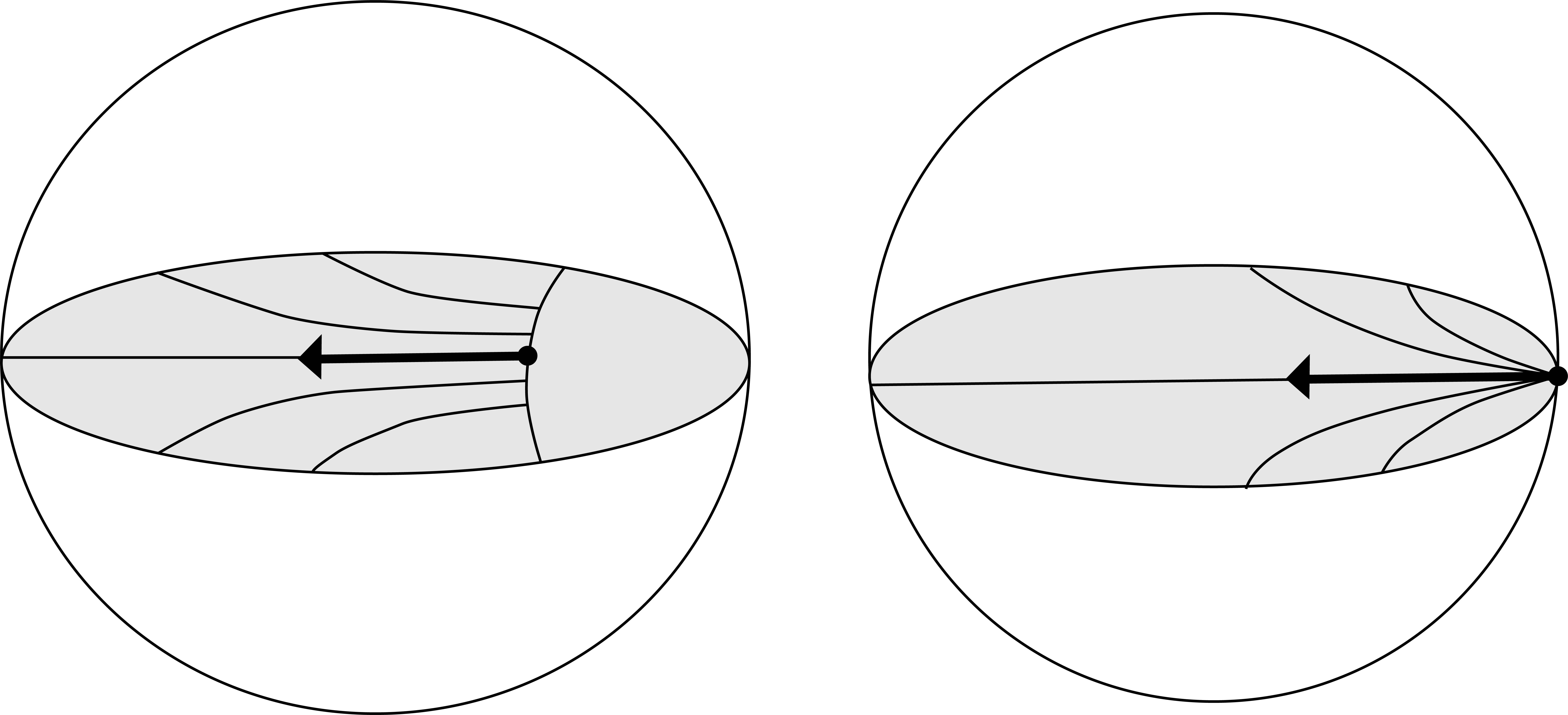} 
 \put(20 , 37 ){\textcolor{darkgray}{$\H^3$}}  
  \put(70 , 37 ){\textcolor{darkgray}{$\H^3$}}  
    \put(35 ,21 ){\small \textcolor{Black}{$d \beta_i v_i$}} 
     \put(65 ,23.5 ){\small \textcolor{Black}{$\lim d \beta_i v_i$}} 
      \end{overpic}
\caption{The convergence of the twist coordinate under neck-pinching.}\label{fTangentVectorAtPiBending}
\end{figure}
\end{proof}
The theorem follows from \Cref{FenchelNielsenForPinchedLoop} as follows: 
Suppose that $b_L(\tau_i)$ converges as $i \to \infty$. 
Then, the hyperbolic structure on every component of $S \minus M$ must converge.
Thus, for each loop $m$ of $M$,  $\length_{\tau_i} m$ limits to a non-negative number. 
By \Cref{FenchelNielsenForPinchedLoop}, as  $b_L(\tau_i)$ converges,  the twist parameters along each loop of $M$ converge. 
Since $\tau_i$ leaves every compact subset, at least one loop of $M$ must be pinched as $i \to \infty$. 
Hence the two conditions hold.

To prove the other direction, suppose that the lengths of some loops of $M$ limit to zero and all the other Fenchel-Nielsen coordinates with respect to $P$ converge in the parameter space as $i \to \infty$. 
Let $M'$ be the sub-multiloop of $M$ consisting of the loops whose lengths go to zero. 
Then, for each component $F$ of $S \minus M'$,  $b_L (\tau_i) \vert \pi_1(F)$ converges as $i \to \infty$. 
Therefore, by \Cref{FenchelNielsenForPinchedLoop}, $b_L(\tau_i)$ converges. 
This completes the proof.
   \Qed{NonpropernessOfBendingTeichmullerSpaces}


\section{The boundary map of the real bending map} 

\begin{theorem}\Label{GenericConvergenceToThurstonBoundary}

Let $[V] \in \PML \cong \bdr\TT$ be a Thurston boundary point. 
Suppose that every singular leaf of  $V$ is a tripod, i.e. three rays sharing a common endpoint. 
Let $\tau_i \in \TT$ be a sequence of hyperbolic surfaces converging to $[V]$. 

Then, for every measured lamination $L \in \ML$,  the representation $b_L (\tau_i) \in \rchi$ converges to the Morgan-Shalen boundary point corresponding to $[V]$ as $i \to \infty$. 
\end{theorem}
\begin{remark}
Suppose, in contrast,  that the projective measured foliation $[V]$ contains a singular leaf that is not a tripod. 
Then, the limit tree of $\tau_i$ may possibly be ``folded" into a smaller tree by a ``straight map"; thus a limit of $b_L (\tau_i)$ might not coincide with the Morgan-Shalen boundary point of $[V]$, similar to the folding phenomenon in \cite{Dumas18HolonomyLimit}). 
\end{remark}
   
\proof
Let $\ell$ be an essential simple closed curve on $S$. 
As every singular leaf of $V$ is a tripod, $\ell$ is not contained in a leaf of $V$. 

Pick a marked Riemann surface $X \in \TT$ as a base point of the harmonic parametrization of $\TT$ (\cite{Wolf91}  \cite{Hitchin87}).  
Then, for each $i = 1, 2, \dots$,  there is a unique harmonic map $h_i\col  X \to \tau_i$, preserving the marking. 
Let $q_i$ be its Hopf differential on $X$, which is a holomorphic quadratic differential on $X$.
In this manner, $\TT$ is parametrized by the complex vector space of dimension $3g -3$ consisting of holomorphic quadratic differentials on $X$.  
 
This harmonic map parametrization is compatible with Thurston's boundary of $\TT$. 
For each $i = 1,2, \dots$, let $E_i$ be the flat surface corresponding to $(X, q_i)$.
As $\tau_i$ converges to the boundary point $[V]$ as $i \to \infty$ (\cite{Wolf91}),  the unit-area surface $E_i/ \sqrt{{\rm Area}\, E_i}$ converges to the flat surface $E_\infty$ as $i \to \infty$ so that $E_\infty$ realizes the conformal structure of  $X$ and the projective measured foliation $[V]$ as its vertical foliation.  
 
For each $i = 1, 2, \dots$, let $\ell_i$ be the geodesic loop on the hyperbolic surface $\tau_i$ representing $\ell$.  
Similarly, let $m_i$ be a geodesic representative of $\ell$ on $E_i$.
We also let $m_\infty$ be a geodesic loop on $E_\infty$ realizing $\ell$. 
 By taking appropriate representatives $m_i$, we may assume that $m_i$ on the normalization $E_i /\sqrt{ \Area E_i}$ converges to $m_\infty$ on $E_\infty$.
 Note that as $V$ has no saddle connections, if $i$ is sufficiently large, $m_i$ is transverse to the vertical foliation of $E_i$ except at the singular points. 

We divide the proof into the following two cases.
\begin{enumerate}
\item $| L| = |V|$. \Label{iSupportCoincide}
\item  $|L| \neq |V|$.  \Label{iSupportDoNotCoincide}
\end{enumerate}
In Case (\ref{iSupportCoincide}), we show that the transversal measure of every $\ell_i$ the geodesic representative of $L$ on $\tau$ is uniformly small (\Cref{fSmallIntersection}). 
In Case (\ref{iSupportDoNotCoincide}),  we show that $\ell_i$ intersects the geodesic representative of $L$ on $\tau$ at a small angle (\Cref{fNonVerticalBendingLamination}). 

(Case \ref{iSupportCoincide})
Suppose that  $| L| = |V|$.
We show that,  if $i$ is sufficiently large, every unit-length segment of $\ell_i$ on $\tau_i$ intersects the geodesic representative of $L$ in a uniformly small amount w.r.t. the transversal measure of $L$--- this implies that the translation length of $\ell$ does not change much by bending along $L$. 

Since $| L |  = |V|$, let $V_L$ denote the measured foliation on $S$ corresponding to $L$.
For $\ep > 0$, pick  segments $m_{\infty, 1} \dots m_{\infty, n}$ of $m_\infty$, such that 
$V_L(m_{\infty, j})  < \ep/3$ for all $j = 1, \dots, n$ and the endpoints of $m_{\infty, j}$ are not at the singular points of $E_\infty$, where  $V_L(m_{\infty, j})$ denotes the measure of $m_{\infty, j}$ given by the transversal measure of $V_L$.

As $| L | = | V |$, 
we can pick a scaling $V_i$ of  the vertical measured foliation of $E_i$, such that $V_i$ converges to $V$ as $i \to \infty$.  
We may assume $V_i$ is the vertical foliation of the scaled flat surface $E_i /\sqrt{\Area E_i}$, if necessarily, by rescaling $V$. 
By the convergence of $m_i$ to $m_\infty$, we cover $m_i$ by open geodesic segments  $m_{i, 1} \dots m_{i, n}$  which respectively converge to the cover $m_{\infty, 1} \dots m_{\infty, n}$ of $m_\infty$ as $i \to \infty$. 
Moreover, if $i > 0$ is sufficiently large, then $V_L(m_{i, j})  < \frac{\ep}{3}$ for all $j = 1, \dots, n$.

Using the asymptotic behavior of harmonic maps away from singularities described in the preliminaries (\S \ref{sHarmonicMaps}),  
the $h_i$-image of $m_i$ is a quasi-geodesic loop on $\tau_i$ homotopic to $\ell_i$ such that the quasi-isometric distortion is uniformly bounded in $i = 1,2, \dots$. 
  Pick $\alpha \in \pi_1(S)$ representing $\ell$. 
Then, let $\ti{m}_i$ be the lift of $m_i$ to the universal over $\ti{E}_i$ invariant by $\alpha$, and similarly $\ti{\ell}_i$ be the lift of $\ell_i$ to the universal cover of $\tau_i$ invariant by $\alpha$.    
  Let $\ti{h}_i$ be the lift of the harmonic map $h_i\col X \to \tau_i$ to a $\pi_1(S)$-equivariant mapping between their universal covers.
The distance between $\ti{h}_i(\ti{m}_i)$ and $\ti\ell_i$ is bounded from above uniformly in $i$ by the convergence of $\tau_i \to [V] \in \bdr \TT$ and the asymptotic behavior of harmonic maps $h_i$ as $i \to \infty$.

Let $Z_i$ be the set of the singular points of the flat surface $E_i$. 
For $r> 0$, let $N_i^r$ be the neighborhood of $Z_i$ on $E_i$ corresponding to the $r$-neighborhood of $Z_i$ on $E_i / \sqrt{\Area E_i}$.
Fix a small $r > 0$. 
Thus,  for every $\ep > 0$, if $i > 0$ is sufficiently large, then the $h_i$-image of $m_i \minus N_i^r$ are $(1+\ep)$-biLipschitz embeddings w.r.t. the horizontal Euclidean length of $E_i$.
Therefore, this $h_i$-image is  contained in an $\ep$-neighborhood of $\ell_i$ (\S 3.5). 
Then, by the uniform quasi-isometric property of $\ti{h}_i \vert \ti{m}_i$, we can cover the geodesic loop $\ell_i$ by geodesic segments $\ell_{i, 1} \dots \ell_{i, n}$ corresponding to  $m_{\infty, 1} \dots m_{\infty, n}$ such that $\ell_{i, j}$ is a geodesic segment on $\tau_i$  whose endpoints are $\ep$-close to the $h_i$-image of the endpoints of $m_{i, j}$.
\begin{lemma}\Label{SmallTransversalMeasureRelativeToTheLegth}
For every $r > 0$, if $i > 0$ is sufficiently large, then  $L_i (\ell_{i, j}) < \ep$  and $\length(\ell_{i, j}) > r$ for all $j = 1, \dots, n$, where $L_i (\ell_{i, j}')$ denote the measure of $\ell_{i j}'$ given by the transversal measure of $L_i$. 
\end{lemma}
\begin{proof}
We first prove $L_i (\ell_{i, j}) < \ep$, the main part of the claim.  
Pick $\alpha \in \pi_1(S)$ representing the loop $\ell$. 
Then, let $\ti{m}_\infty$ be  a (bi-infinite) lift of $m_\infty$ to the universal cover $\ti{E}_\infty$ invariant by $\alpha$. 
For each $j = 1, \dots, n$, pick a lift $\ti{m}_{\infty, j}$ of the segment $m_{\infty, j}$ to the universal cover $\ti{E}_\infty$, such that  $\ti{m}_{\infty, j}$ is contained in $\ti{m}_\infty$. 
Let $T_\infty$ be the $\R$-tree obtained by collapsing the vertical leaves of $\ti{E}_\infty$, 
and let $\Psi_\infty\col \ti{E}_\infty \to T_\infty$ denote the quotient map.
Then  $\ti{m}_\infty$ is embedded in $T_\infty$ by $\Psi$ since $m_\infty$ is transverse to $V_\infty$. 

For each endpoint $p$ of $\ti{m}_{\infty, j}$ and each component $C$ of $\ti{E}_\infty \minus \ti{m}_\infty$, pick a rectangle $R$ with horizontal and vertical edges such that: 
 \begin{itemize}
 \item $p$ is a vertex of $R$;
 \item the interior of $R$ contains  no singular point  of $\ti{E}_\infty$;
 \item the opposite vertex of $p$ on the rectangle $R$ is a singular point $z$ of $\ti{E}$;
 \item the vertical edge of $R$ starting from $p$ is contained in $C$; 
 \item leaves of $V_i$ intersecting the interior of $R$ are disjoint from $\ti{m}_{\infty, j}$;
 \item the horizontal length of $R$ is less than $\ep/3$
  (Figure \ref{fSmallIntersection}, Left). 
\end{itemize}
Let $R_{j, 1}, R_{j, 2}$ denote the rectangles for one endpoint of $\ti{m}_{\infty, j}$ contained in different components of $\ti{E}_\infty \minus \ti{m}_\infty$, and let $R_{j, 3}, R_{j, 4}$ denote the rectangles for the other endpoints of $\ti{m}_{\infty, j}$.  
Similarly, let $z_1, z_2$ denote the singular points of $\ti{E}_\infty$ which are vertices of $R_{j, 1}, R_{j, 2}$ opposite from the endpoint vertex, and let $z_3, z_4$ denote the singular points of $\ti{E}_\infty$ which are vertices of $R_{j, 3}, R_{j, 4}$ opposite from the other endpoint vertex. 
We may assume that the projections $\Psi_\infty (z_1), \Psi_\infty (z_2),\linebreak[2]\Psi_\infty (z_3),\Psi_\infty (z_4)$ lie on $\ti{m}_{\infty}$ in this order, if necessary, by exchanging $z_1$ and $z_2$, and $z_3$ and $z_4$.

For each $k = 1,2, 3, 4$,  pick a small tripod neighborhood $\gam_k$ of the singular point $z_i$ in the horizontal leaf containing $z_i$ (Figure \ref{fSmallIntersection}). 
As $E_i/ \sqrt{\Area E_i}$ converges to $E_\infty$, for sufficiently large $i$, we pick tripod neighborhoods $\gam_{i, j, 1}, \gam_{i, j, 2}, \gam_{i, j, 3}, \gam_{i, j, 4}$ of the singular points of $\ti{E}_i$ such that 
\begin{itemize}
\item  $\gam_{i, j, 1}, \gam_{i, j, 2}, \gam_{i, j, 3}, \gam_{i, j, 4}$ converge to $\gam_{j, 1}, \gam_{j, 2}, \gam_{j, 3}, \gam_{j, 4}$ as $i \to \infty$, respectively. 
\end{itemize}
 If $i$ is sufficiently large,  by the harmonic map $h_i$, a small neighborhood of $\gam_{i, j, k}$ maps to a region close to an ideal triangle $\Delta_{i, j, k}$ in a large compact subset in $\ti\tau_i$  \cite{Minsky92}.
 Since the interior of $R_{i, j, k}$ contains no singular point, we may assume that $\ti\ell_i$ is a common edge of $\Delta_{i, j, 1}, \Delta_{i, j, 2}, \Delta_{i, j, 3}, \Delta_{i, j, 4}$by small perturbation. 
Then the endpoints of $\ti\ell_i$ are two of the ideal vertices of $\Delta_{i, j, k}$.
 Let $v_k$ be the (other) ideal vertex of $\Delta_{i, j, k}$ which is not an endpoint of $\ti{\ell}_i$.
By reordering, we may additionally assume that $\Delta_{i, j,1 }$ and $\Delta_{i, j, 4}$ are contained in the same component of $\H^2 \minus \ti\ell_i$ and  $\Delta_{i, j, 2}$ and $\Delta_{i, j, 3}$ are contained in the other component of $\H^2 \minus \ti\ell_i$. 

Let $\ti\ell_{i, j}$ be the $\alpha$-invariant lift of $\ell_{i, j}$ in $\ti\tau_i \cong \H^2$. 
Let $L_i$ denote the geodesic measured lamination on the hyperbolic surface $\tau_i$ representing $L$.
For sufficiently large $i$,  if a leaf $\ell$ of $\ti{L}$ intersects $\ti{\ell}_{i, j}$, then an endpoint of $\ell$ is between $v_1$ and $v_4$ and the other endpoint between $v_2$ and $v_3$.
 Since $V_L (m_{i, j})  < \frac{\ep}{3}$ and the horizontal widths of $R_{j, 1},R_{j, 2},R_{j, 3},R_{j, 4}$ are less than $\frac{\ep}{3}$, therefore, $L (\ell_{i, j}) < \ep$ for sufficiently large $i$. 
 
 Since $m_{\infty, j}$ is transverse to the vertical foliation and the harmonic map $h_i$ stretches in the horizontal direction more and more, the length of $\ell_{i, j}$ diverges to $\infty$ as $i \to \infty$.
\end{proof}

\begin{figure}\Label{fRectanglesForSegments}
\begin{overpic}[scale=.2
] {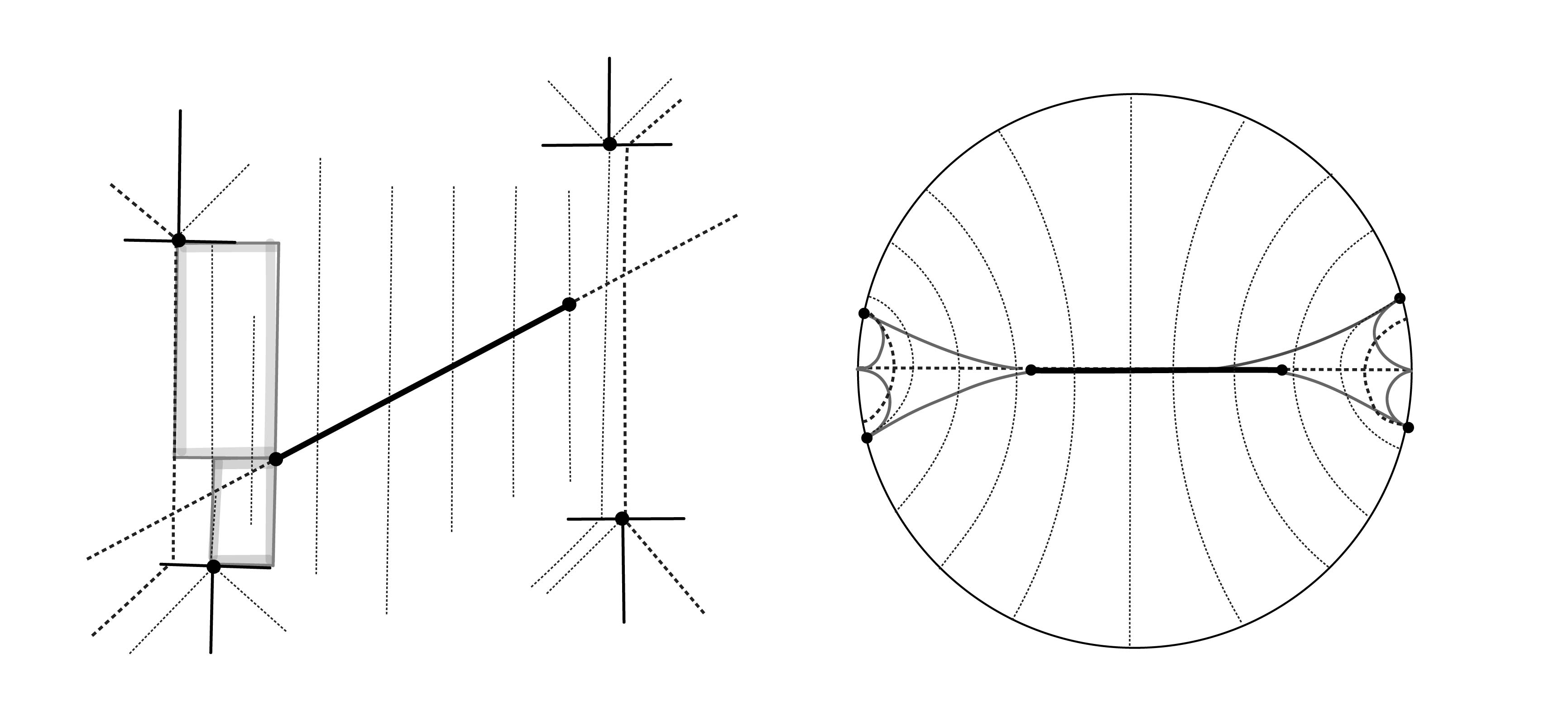} 
 \put(42,27 ){\textcolor{darkgray}{\small \contour{white}{$\ti{m}_\infty$}}}   
  \put(12, 34 ){\textcolor{black}{\small \contour{white}{$\gam_1$}}}   
  \put(14, 6 ){\textcolor{black}{\small \contour{white}{$\gam_2$}}}   
  \put(36, 7 ){\textcolor{black}{\small \contour{white}{$\gam_3$}}}   
  \put(34, 35 ){\textcolor{black}{\small \contour{white}{$\gam_4$}}} 
  \put(24, 18 ){\textcolor{black}{\small \contour{white}{$\ti{m}_{\infty, j}$}}} 
  \put(20, 38 ){\textcolor{black}{\small \contour{white}{$\ti{E}_\infty$}}} 
  \put(51.5, 27 ){\textcolor{black}{\small \contour{white}{$v_1$}}}   
  \put(51.5, 16 ){\textcolor{black}{\small \contour{white}{$v_2$}}}   
  \put(90, 17 ){\textcolor{black}{\small \contour{white}{$v_3$}}}  
    \put(89, 31 ){\textcolor{black}{\small \contour{white}{$v_4$}}}   %
    \put(71, 19 ){\textcolor{black}{\small \contour{white}{$\ti\ell_{i, j}$}}} 
      \put(11.5, 23 ){\textcolor{black}{\small \contour{white}{$R_{j, 1}$}}} 
      \put(70, 35 ){\textcolor{black}{\small \contour{white}{$\ti{L}$}}} 
      \end{overpic}
\caption{The geodesic segment $\ti{\ell}_{i,  j}$ has a small transversal measure. The ideal triangle with the vertex $v_k$ is $\Delta_{i, j, k}$.}\Label{fSmallIntersection}
\end{figure}

Let $\beta_i$ be the $b_L(\tau_i)$-equivariant pleated surface $\H^2 \to \H^3$ obtained by bending, in $\H^3$,  the universal cover of $\tau_i$ along the inverse-image of $L$.
By \Cref{SmallTransversalMeasureRelativeToTheLegth},
for every $\ep > 0$, if $i$ is sufficiently large, then  the restriction of $\beta_i\col \ti{\tau}_i \to \H^3$ to  $\ti\ell$ into $\H^3$ is a $(1 - \ep, 1 + \ep)$-biLipschitz embedding (\cite[I.4.2.10]{CanaryEpsteinGreen84}). 
Hence the ratio of the translation length of $b_L(\tau_i) \alpha$ and the $\tau_i$-length of $\ell_i$ converges to one as $i \to \infty$. 
Therefore $b_L(\tau_i)$ converges to $[V]$ in the Morgan-Shalen boundary.

(Case \ref{iSupportDoNotCoincide})
Suppose that  $| L| \neq |V|$.
In this case, we show that every unit segment of the geodesic representative of $\ell$ on $\tau_i$ intersects $L$ at uniformly small angles if $i$ is sufficiently large.

Let $\LL_\infty$ be the geodesic representative of $L$ on $E_\infty$.
Recall that $m_\infty$ is the geodesic representative of  $\ell$ on the limit flat surface $E_\infty$ of unit area. 
Consider the  $\pi_1(S)$-invariant measured lamination of $\ti\tau_i$  obtained by pulling back $L_i$ on $\tau_i$ by the universal covering map. 
 Let $\ti{L}_i$ be its $\alpha$-invariant measured lamination on $\ti\tau_i$ consisting of leaves intersecting $\ti\ell_i$.

\begin{proposition}\Label{AsymptoticallySmallAngle}
For every $\ep > 0$, if $i > 0$ is sufficiently large, then $\angle_{\ti\tau_i} (\ti\ell_i, \ti{L}_i) < \ep$.
\end{proposition}
 (See \S \ref{sAngleBetweenLaminations} for the definition of the angle $\angle_{\ti\tau_i} (\ti\ell_i, \ti{L}_i)$.)
\begin{proof}
Let $\mu$ be a leaf of $\LL_\infty$ intersecting $\ti{m}_\infty$ at a point $p_\infty$.

Pick Euclidean rectangles $R_{\infty, 1}, R_{\infty, 2}$ in $\ti{E}_\infty$ such that 
\begin{itemize}
\item   $R_{\infty, 1}, R_{\infty, 2}$ have horizontal and vertical edges and no singular points in their interiors; 
\item the interiors of $R_{\infty, 1}$ and $R_{\infty, 2}$ are contained in different components of $\ti{E}_\infty \minus \ti{m}_\infty;$
\item one horizontal edge of $R_{\infty, k} (k = 1, 2)$ is contained in $\ti{m}_\infty$, and each vertical edge of $R_{\infty, k}$ contains exactly one singular point of $\ti{E}_\infty$;
\item  the singular points on the vertical edges of $R_{\infty, k}$ divide the boundary $\bdr R_{\infty, k}$ into two piecewise linear curves, and  $\mu$ passes through $R_{\infty, k}$ and $\mu$ intersects each  piecewise-linear segment of $\bdr R_{\infty, k}$ in a single point
(\Cref{fNonVerticalBendingLamination}).
\end{itemize}
Since $E_i/ \sqrt{\Area E_i}$ converges to $E_\infty$ as $i \to \infty$, 
for sufficiently large $i$, 
we pick Euclidean rectangles $R_{i, 1}, R_{i, 2}$ in $\ti{E}_i$ such that $R_{i, j} \to R_i$ as $i \to \infty$. 
Let $\LL_i$ be the geodesic representative of the measured lamination $L$ on the flat surface $E_i$. 
By the convergence,  the properties of  $R_{\infty, 1}$ and $R_{\infty, 2}$ carry over to  $R_{i, 1}$ and $R_{i, 2}$ for sufficiently large $i$.   
Namely, letting $\mu_i$ be the leaf of $\ti\LL_i$ on $\ti{E}_i$ corresponding to $\mu$, 
\begin{itemize}
\item   $R_{i, 1}, R_{i, 2}$ have horizontal and vertical edges and no singular points in their interiors; 
\item the interiors of $R_{i, 1}$ and $R_{i, 2}$ are contained in different components of $\ti{E}_i \minus \ti{m}_i;$
\item one horizontal edge of $R_{i, k} (k = 1, 2)$ is contained in $\ti{m}_i$, and each vertical edge of $R_{i, k}$ contains a unique singular point of $\ti{E}_i$; 
\item the singular points on the vertical edges of $R_{i, k}$ divide the boundary $\bdr R_{i, k}$ into two piecewise geodesic curves, and  $\mu_i$ passes through $R_{i, k}$ and $\mu_i$ intersects each component of $\bdr R_{\infty, k}$ minus the singular point in a single point. 
\end{itemize}
Let $z_1, z_2$ be the singular points of $\bdr R_{\infty, 1}$ and $z_3, z_4$ be the singular points of $\bdr R_{\infty, 2}$.
For $k = 1,2, 3, 4,$  let $\gam_k = \gam_{\infty, k}$ be a small horizontal tripod neighborhood of $z_k$ (\Cref{fNonVerticalBendingLamination}).
We may assume that the projections of $z_1, z_2, z_3, z_4$ to $\ti{m}_i$ along vertical leaves lie on $\ti{m}_i$ in this order (of indices).

For $i$ large enough, 
 let $z_{i, k}$ be a singular point of the vertical edge of $R_{i, k}$ such that $z_{i, k} \to z_k$ as $i \to \infty$.
 
Let $\gam_{i, k}$ be a horizontal tripod neighborhood of $z_{i, k}$ such that $\gam_{i, k}$ converges to $\gam_{\infty, k}$ as $i \to \infty$. 
As in Case One, by the work of Wolf and Minsky (\cite{Wolf91, Minsky92}), if $i$ is sufficiently large,  a small neighborhood of $\gam_{i, k}$ in $\ti{E}_i/\sqrt{\Area E_i}$  maps to a region in $\ti\tau_i \cong \H^2$ close to an ideal triangle $\Delta_{i, k}$ in a large compact subset.
Since $R_{i, k}$ and $R_{\infty, k}$ contain no singular points in their interiors, we may assume that the geodesic $\ti\ell_i$ is a unique common boundary edge of $\Delta_{i, 1}, \Delta_{i, 2}, \Delta_{i, 3}, \Delta_{i, 4}$.

Let $v_{i, k}$ be the ideal vertex of $\Delta_{i, k}$ which is not an endpoint of $\ti\ell_i$ for $k = 1, 2, 3, 4$.
Then, since the hyperbolic metric stretches in the horizontal direction and shrinks in the vertical direction of $E_i$ (\S \ref{sHarmonicMaps}), 
the distance between the projections of $v_{i, 2}$ and $v_{i, 3}$ to $\ti\ell_i$ diverges to infinity.    
 
Let $\lambda_i$ be the geodesic in $\ti\tau_i$ fellow travels with $h_i (\mu_i)$.
The boundary circle $\bdr \ti\tau_i \cong \mathbb{S}^1$ with the four points $v_{i, 1}, v_{1, 2}, v_{i, 3}, v_{i, 4}$ removed consists of four circular arcs. 
Then, one endpoint of $\lam_i$ is in the circular arc between $v_{i, 1}$ and $v_{i, 2}$, and the other endpoint is in the circular arc between $v_{i, 3}$ and $v_{i, 4}$. 
Since those circular arcs contain the endpoints of $\ti\ell_i$, the divergence of distance above implies  $\angle_{\ti\tau_i} (\ti\ell_i, \lam_i) \to 0$ as $i \to \infty$ (\Cref{fNonVerticalBendingLamination}). 

Suppose that another leaf $\mu'$ of $\ti\LL_\infty$ is sufficiently close to $\mu$ in a large compact subset containing the intersection point $p_\infty$ and the rectangles $R_{\infty, 1}, $ and $R_{\infty, 2}$ in $\ti{E}_\infty$.
 Let $\lam'_i$ be the leaf of $\ti{L}_i$ corresponding to $\mu'$.
Then, similarly,  an endpoint of $\lam'_i$ is in the circular arc between $v_{i, 1}$ and $v_{i, 2}$  and the other endpoint is in the circular arc between $v_{i, 3}$ and $v_{i, 4}$ for sufficiently large $i$. 
Therefore, by the divergence of the distance between the projections,  $\angle_{\ti\tau_i} (\ti\ell_i, \lam'_i) \to 0$ as $i \to \infty$. 

Since $m_\infty$ is a closed curve on $E_\infty$, by compactness, we see that  $\angle_{\ti\tau_i} (\ti\ell_i, \ti{L}_i) \to 0$ as $i \to \infty$.  
\end{proof}

By \Cref{AsymptoticallySmallAngle}, 
$\angle_{\tau_i} (L_i, m_i) \to 0$ as $i \to \infty$. 
Let $\rho_i = b_L (\tau_i)\col \pi_1(S) \to \PSL_2\C.$
Then, since the angle between $L_i$ and $m_i$ goes to zero,  the ratio of $\length_{\tau_i} m_i$ and the translation length of $\rho_i (\alpha)$ converges to one as $i \to \infty$ (\cite[Proposition 4.1]{Baba-15gt}). 
Thus $b_L (\tau_i)$ converge to $[V]$ in the Morgan-Shalen compactification. 
\begin{figure}
\begin{overpic}[scale=.24
] {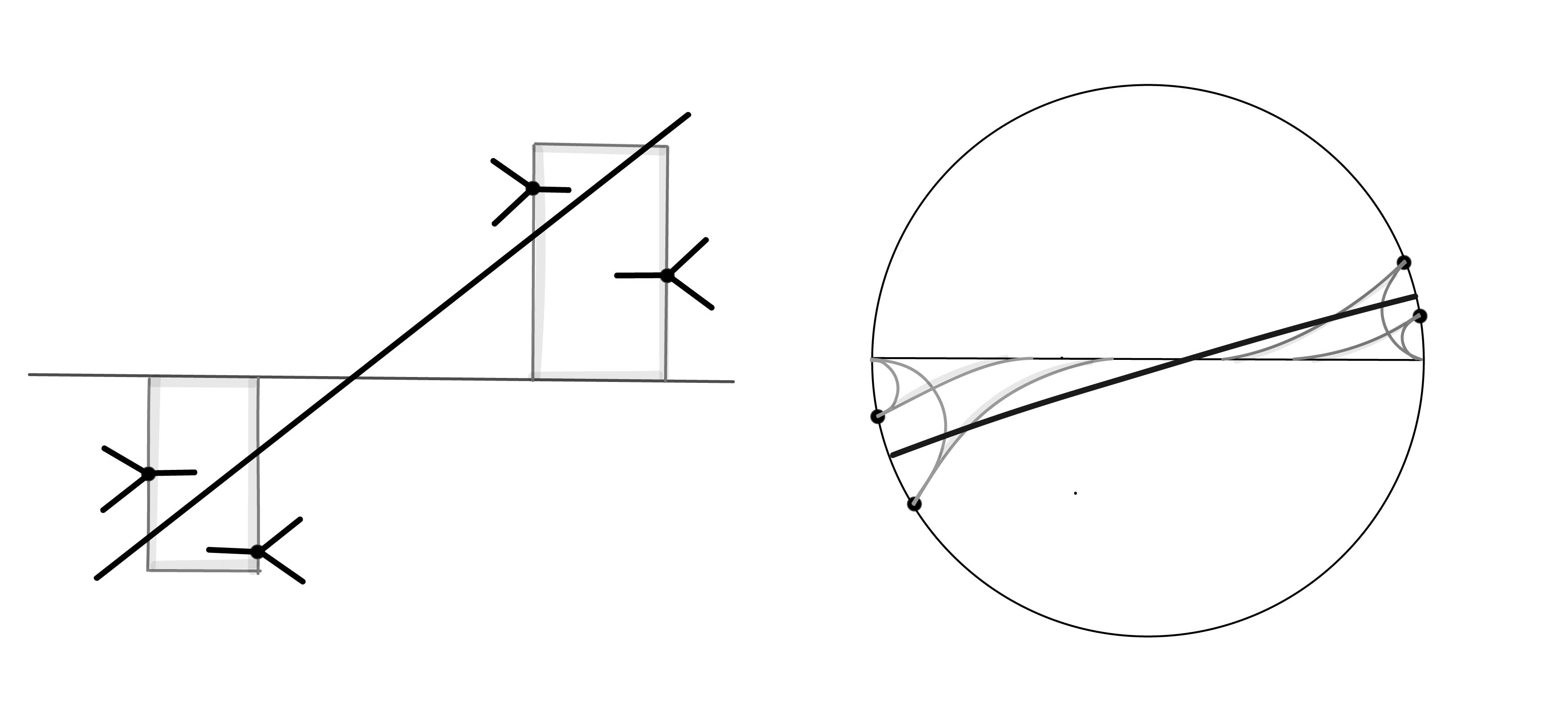} 
 \put(10, 38){\textcolor{darkgray}{\small \contour{white}{$\ti{E}_\infty$} } }   
 \put(10.1, 19){\textcolor{darkgray}{\small \contour{white}{$R_{\infty, 1}$} } }   
 \put(4, 15){\textcolor{black}{\small \contour{white}{$\gam_1$} } }   
 \put(20, 10){\textcolor{black}{\small \contour{white}{$\gam_2$} } }   
 \put(35.5, 23.5){\textcolor{darkgray}{\small \contour{white}{$R_{\infty, 2}$} } }   
  \put(22, 25){\textcolor{black}{\small \contour{white}{$\mu$} } }   
 \put(4, 23.5){\textcolor{darkgray}{\small \contour{white}{$\ti{m}_\infty$}}}   
 \put(70, 33){\textcolor{darkgray}{\small \contour{white}{$\ti\tau_i$} } }   
 \put(58, 24){\textcolor{darkgray}{\small \contour{white}{$\Delta_1$} } }   
 \put(61, 16){\textcolor{darkgray}{\small \contour{white}{$\Delta_2$} } }   
 \put(67, 24){\textcolor{darkgray}{\small \contour{white}{$\ti\ell_i$} } }   
 \put(70, 18){\textcolor{black}{\small \contour{white}{$\lam_i$} } }   
 \put(55, 11){\textcolor{black}{\small \contour{white}{$v_{i, 2}$} } }   
 \put(90, 31){\textcolor{black}{\small \contour{white}{$v_{i, 3}$} } }   
      \end{overpic}
\caption{$\angle(\mu_i, \ti\ell_i)$ is small for large $i$.  }\Label{fNonVerticalBendingLamination}
\end{figure}
\Qed{GenericConvergenceToThurstonBoundary}

\section{Framed character varieties along loops}\Label{sFramedCharacterVarieties}

    We have analyzed the real-analytic embedding $b_L\col \TT \to \rchi$ defined for an arbitrary measured lamination $L \in \ML$. 
As $\TT$ is regarded as the Fricke space, a component of the real slice of the character variety $\rchi$, one can certainly extend $b_L$ to a holomorphic mapping from a neighborhood of $\TT$ in $\rchi$ into $\rchi$. 
However, it does {\it not} extend to the entire character variety $\rchi$ for multiple reasons. 
In particular, for a representation $\rho\col \pi_1(S) \to \PSL_2\C$, if there is no $\rho$-equivariant pleated surface in $\H^3$ {\it realizing} the measured lamination $L$ (i.e. its pleating locus contains $L$), then the bending the representation along $L$ does not make sense.  

For instance, if a representation $\rho\col \pi_1(S) \to \PSL_2\C$ takes a loop $\ell$ with weight $w$ on $S$ to a parabolic element,  then there is no $\rho$-equivariant pleated surface realizing $\ell$.  
Suppose in addition that the restriction of $\rho$ to each component of $S \minus \ell$ is non-elementary (which generically holds true). Then, if  a sequence of representations $\rho_i$ realizing $\ell$ converges to $\rho$, then the bending of $\rho_i$ along $\ell$ by angle $w$ must diverge in $\rchi$ as $i  \to \infty$. 

Therefore, in this section, we modify the character variety $\rchi$ and obtain a closed complex-analytic set, which will be a domain of the complexified bending map.
   
  For a surface with punctures,   Fock and Goncharov introduced a framing of a surface group representation (\cite{FockGoncharov06}). 
Their framing assigns a fixed point of peripheral holonomy around each puncture. 
In fact,  this framing was useful for describing the deformation space of $\CP^1$-structures on a surface with punctures via their framed holonomy representations (\cite{Allegretti_Bridgeland_20, GuptaMj21, Baba-25}). 

In this paper, we introduce a certain framing along loops which assigns a pair of distinct fixed points of their holonomy. 
Such framings will be used to determine the axes for bending deformation even when the holonomy along loops is trivial. 
   
\subsection{Framing of  Representations  along a loop}\Label{sRepresentataionsFramedAlongALoop} 

For simplicity, we first discuss the modification in the case that the bending lamination is a single loop.
Let $\RR$ be the space of representations $\pi_1(S) \to \PSL_2\C$ (without any equivalence relation).
Then $\RR$ is an affine algebraic variety: 
    Namely, pick a presentation of the fundamental group $\pi_1(S)$, for instance $$\pi_1(S) = \langle\, a_1, b_1, \dots, a_g, b_g \mid  \Pi_{i = 1}^n [a_i, b_i]\, \rangle.$$
    Since $\PSL_2(\C)$ embeds into ${\rm GL}_3(\C)$ by the adjoint representation, $\PSL_2(\C)$ is a complex affine Lie group sitting in $\C^9$.
    Then, by the embedding $\RR \to (\C^9)^{2 g}$ defined by $$\rho \mapsto (\rho(a_1), \rho(b_1), \dots, \rho(a_g), \rho(b_g)) \in (\C^9)^{2g},$$  $\RR$ has an affine algebraic structure on cut by the equation corresponding to the relator $\Pi_{i = 1}^n [a_i, b_i]$. 
 
Let $\ell$ be a simple closed curve on $S$. 
Let $\Gam_\ell$ be the set of elements in $\pi_1(S)$ whose free homotopy classes are the homotopy class of $\ell$ on $S$; clearly, elements in $\Lam$ are conjugate to each other by elements in $\pi_1(S)$.

Pick an element  $\alpha_\ell \in \Gam_\ell$. 
Let $\rho\col \pi_1(S) \to \PSL_2\C$ be a homomorphism.
Suppose that $\rho(\alpha_\ell)$ is {\it not} a parabolic (but it can be the identity). 
Then, there is an ordered pair $(u, v)$ of distinct points $u, v$ on $\CP^1$ fixed by $\alpha_\ell$ pointwise.
We can equivariantly extend a pair $(u, v)$ to pairs $(u_\gam, v_\gam)$ for all representatives $\gam \in \Gam_\ell$ so that $\gam$ fixes $u_\gam$ and $v_\gam$ in $\CP^1$. 
Such an equivariant assignment $(u_\gam, v_\gam)_{\gam \in \Gam_\ell}$ of ordered fixed points of  $\gam$ is called a {\sf framing} of $\rho$ along $\ell$. 
By abuse of notation, we denote this equivariant framing $\{ (u_\gam, v_\gam) \}_{\gam \in \Gam_\ell}$,  by $(u, v)$, since it is determined by the initial choice $(u, v)$ for $\alpha_\ell$. 
We call the triple $(\rho, u, v)$ {\sf a framed representation}. 
We, later, utilize the equivariant framing to produce the equivariant bending axes (\S \ref{sBendingFramedRepresentataions}).
 Let 
$$R_\ell = \bigg\{   (\rho, u, v)  \in \RR \times (\CP^1)^2   \,\bigg\vert\,
 \rho(\alpha_\ell) u = u, \rho(\alpha_\ell) v = v, u \neq  v
  \bigg\}.$$
Then $R_\ell$ is a closed analytic subset of $\RR \times (\CP^1 \times \CP^1 \minus D)$, where $D$ is the diagonal $\{(z, z)  \mid z \in \CP^1\}$.
Note that if $(\rho, u, v) \in R_\ell$, then the $\rho(\alpha_\ell)$ can {\it not} be a parabolic element, since $u, v$ are distinct fixed points of $\rho(\alpha_\ell)$.
On the other hand, $\rho(\alpha_\ell)$ can be either hyperbolic, elliptic, or the identity.

Let $\GG_\ell$ be the subgroup of the mapping class group of $S$ which preserves the loop $\ell$.
Clearly, $\GG_\ell$ acts on $R_\ell$ by marking change.

We now assume that the loop $\ell$ has a weight in $\R_{> 0}$.
Suppose, first, that the weight $\omega$ of the loop $\ell$ is not equal to $\pi$ modulo $2\pi$. 
Fix any complex number  $w \in \C$ with $|w| > 1$.
Then, given $(u, v) \in \CP^1 \times \CP^1$ with $u \neq v$, there is a unique hyperbolic element $\gam_{u, v, w} \in \PSL_2\C$, such that $u$ is the repelling fixed point, $v$ is the attracting fixed point of $\gam_{u, v, w}$ and that  $\gam_{u, v, w}$ can be conjugated to the hyperbolic element $z \mapsto w z$  by an element of $\PSL_2\C$.
Clearly, this mapping $(u, v) \mapsto \gam_{u, v, w}$ is a biholomorphic mapping onto its image.
Then, $(\rho, u, v) \in R_\ell$ biholomorphically corresponds to a unique element $(\rho, \gam_{u, v, w})$ of $\RR \times \PSL_2\C$. 
Thus $R_\ell \to \RR \times \PSL_2\C$  is a biholomorphic map onto its image.
Since $\PSL_2\C \cong \operatorname{SO}_3(\C) \sub \C^9$, we see that $R_\ell$ is biholomorphic to a closed analytic set in a complex vector space of finite dimension. 
(It is closed, since if $(u, v) \in (\CP^1)^2 \minus \Delta$ converges to a point in the diagonal $\Delta$, then $\gam_{u, v, w}$ must leaves every compact subset of $\PSL_2\C$.) 
Therefore $R_\ell$ is also a Stein space, as it is a closed analytic subset of a Stein space. 

The theory of categorical quotients of Stein manifolds has been developed analogously to GIT-quotients of affine algebraic varieties (see \cite{Snow82}). 
Let $X_\ell$ be the categorical quotient ({\it Stein quotient}) $R_\ell\sslash\PSL_2\C$,  which is again Stein.
In this quotient,  two framed representations $(\rho_1, u_1, v_1)$ and $(\rho_2, u_2, v_2)$ in $R_\ell$ are identified if and only if every $\PSL_2\C$-invariant analytic function $f$ on  $R_\ell$ takes the same value at $(\rho_1, u_1, v_1)$ and $(\rho_2, u_2, v_2)$; see \cite[\S 3]{Snow82}. 
We denote, by $[\rho, u, v] \in X_\ell$, the equivalence class of $(\rho, u, v)$.

Next suppose that $\ell$ has weight $\pi$ modulo $2\pi$. 
In this case, the ordering of the framing $(u, v)$ will {\it not} affect the complexified bending map, and thus we take a slightly stronger quotient.
Then, let $\gam_{u, v}$ be the elliptic element of angle $\pi$ with the axes connecting $u$ and $v$. 
Let $R_\ell/ \Z_2$ be the quotient of $R_\ell$ by the $\Z_2$-action which switches the ordering of the framing, namely, given by $(\rho, u, v) \mapsto (\rho, v, u)$.
Consider the map $R_\ell/\Z_2 \to \RR \times \PSL_2\C$ defined by $(\rho, u, v) \mapsto (\rho, \gam_{u, v})$.
Thus $R_\ell/ \Z_2$ is biholomorphic to a closed analytic set in $\RR \times \PSL_2\C$.
Similarly, we let $X_\ell$ be the Stein quotient $(R_\ell / \Z_2) \sslash \PSL_2\C$.
The action of $\GG_\ell$ on $R_\ell$ descends to an action on $X_\ell$.
   
\subsubsection{Coordinates for the quotient space of representations framed along a single loop}\Label{sCoordinateRing}
We defined the Stein space $X_\ell$ as a Stein quotient.  In this section, we indeed realize $X_\ell$ as an analytic set in an affine space by identifying it with a subset of a $\PSL_2\C$-character variety $\rchi(\pi_1(S) \ast\Z)$ of $\pi_1(S) \ast \Z$.
Recall that, for $(\rho, u, v) \in R_\ell$, the element $\gam_{u, v, w} \in \PSL_2\C$ is a certain hyperbolic element if the weight of the loop $\ell$ is {\it not} equal to $\pi$ modulo $2\pi$ and a certain elliptic element of angle $\pi$ otherwise.

Given $(\rho, u, v) \in R_\ell$, let $\hat\rho = \hat\rho_{u,v,w}$ be the homomorphism from the free product $\pi_1(S) \ast \Z$ to $\PSL_2\C$, such that 
 every $\gam \in \pi_1(S)$ maps to $\rho(\gam)$ and $1 \in \Z$ maps to $\gam_{u, v, w}$.  
Then, with respect to the $\PSL_2\C$-action on $R_\ell$, we clearly have the following. 
\begin{lemma}
\begin{enumerate}
\item Suppose that the weight of $\ell$ is not equal to $\pi$ modulo $2\pi$.
Then $(\rho_1, u_1, v_1)$ and $(\rho_2, u_2, v_2)$ are identified by an element of $\PSL_2\C$ if and only if $\hat{\rho}_1$ and $\hat\rho_2$ are conjugate by $\PSL_2\C$. 
\item Suppose that the weight of $\ell$ is equal to $\pi$ modulo $2\pi$.
Then $(\rho_1, u_1, v_1)$ and $(\rho_2, u_2, v_2)$ are identified by an element of $\PSL_2\C \times \Z_2$ if and only if $\hat{\rho}_1$ and $\hat\rho_2$ are conjugate by $\PSL_2\C$, where the $\Z_2$-action exchanges the ordering of the framing. 

\end{enumerate}\end{lemma}
 
Let $\hat\RR$ be the space of representations $\pi_1(S) \ast \Z \to \PSL_2\C$.
 Then $\hat\RR$ is an affine algebraic variety. 
Suppose that the weight of $\ell$ is {\it not} equal to $\pi$ modulo $2\pi$. 
 We have seen that the mapping $R_\ell \to \RR \times \PSL_2\C$ is a biholomorphic map onto its image by the mapping $(\rho, u, v) \mapsto \hat\rho$.
Let  $\hat\RR_\ell$ be this image. 
Then $\hat\RR_\ell$ is the closed analytic subset in $\hat\RR$ biholomorphic to $R_\ell$, and thus in particular it is Stein. 
Moreover, this biholomorphism $R_\ell \to \hat\RR_\ell$ is equivariant with respect to the $\PSL_2\C$-action. 
Thus the Stein space $X_\ell = R_\ell \sslash \PSL_2\C$ is biholomorphic to the subvariety $ \hat\RR_\ell \sslash \PSL_2\C$ of $\rchi(\pi_1(S) \ast \Z)$. 

A similar identification holds in the case when $\ell$ has weight $\pi$ modulo $2\pi$.  
The Stein space $R_\ell / \Z_2$ biholomorphically maps to its image, denoted by $\hat\RR_\ell$, in $\hat\RR$ by  the mapping $(\rho, u, v) \mapsto \hat\rho$.
Then $X_\ell  = (R_\ell / \Z_2) \sslash \PSL_2\C$ is biholomorphic to the Stein space $\hat\RR_\ell\sslash \PSL_2\C$.

Let $\gam \in \pi_1(S) \ast \Z$. 
Let $\tr^2(\gam)$ be the (polynomial) function on $\hat{\RR}_\ell$ defined by $(\rho, u, v ) \mapsto \tr^2 \rho(\gam).$
Then $\tr^2(\gam)$ is a $\PSL_2\C$-equivariant analytic function on $\hat{\RR}_\ell$.  
Then,  by \cite[Corollary 2.3]{HeusenerPorti04}, such trace square functions form coordinates of the Stein quotient $\hat\RR_\ell\sslash \PSL_2\C$, and they also form coordinates for $X_\ell \,(\cong \hat\RR_\ell\sslash \PSL_2\C)$.
Therefore we have the following.
\begin{proposition}\Label{CoordinateRing}
There are finitely many elements $\gam_1, \gam_2, \dots \gam_N$ in $\pi_1 (S) \ast \Z$, such that  the analytic mapping $\hat{\RR}_\ell \to \C^N$ given by $\tr^2(\gam_1), \tr^2(\gam_2) \linebreak[2] \dots, \tr^2(\gam_N)$ induces an analytic embedding of $X_\ell$ into $\C^N$. 
Thus $\tr^2(\gam_1), \linebreak[2] \tr^2(\gam_2) \dots, \tr^2(\gam_N)$ form a coordinate ring.
\end{proposition}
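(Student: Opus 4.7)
The plan is to deduce the proposition from \cite[Corollary 2.3]{HeusenerPorti04} together with the biholomorphic identification of $R_\ell$ (resp.\ $R_\ell/\Z_2$) with the closed, $\PSL_2\C$-invariant analytic subvariety $\hat{\RR}_\ell \subset \hat{\RR}$ set up in \Cref{sCoordinateRing}. Since $\pi_1(S)\ast \Z$ is finitely generated, the representation variety $\hat{\RR} = \operatorname{Hom}(\pi_1(S)\ast\Z,\PSL_2\C)$ is an affine algebraic variety, and the cited result says that the trace-squared functions $\tr^2(\gam)$, $\gam \in \pi_1(S)\ast \Z$, generate the coordinate ring of its GIT quotient $\hat{\RR}\sslash\PSL_2\C$. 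In particular, by finite generation, I would extract finitely many $\gam_1,\dots,\gam_N \in \pi_1(S)\ast\Z$ whose trace-squared functions together give a closed algebraic embedding $\hat{\RR}\sslash\PSL_2\C \hookrightarrow \C^N$.

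The next step is to transfer this statement to $\hat{\RR}_\ell$. Each $\tr^2(\gam_i)$ is $\PSL_2\C$-invariant and holomorphic on $\hat{\RR}$, so it restricts to such a function on $\hat{\RR}_\ell$. By the universal property of the Stein categorical quotient of a $\PSL_2\C$-invariant closed analytic subvariety (\cite{Snow82}), the composition $\hat{\RR}_\ell \hookrightarrow \hat{\RR} \to \C^N$ will factor through $\hat{\RR}_\ell \sslash \PSL_2\C$, producing a holomorphic map $\iota \col \hat{\RR}_\ell \sslash \PSL_2\C \to \C^N$. Because $\hat{\RR}_\ell$ is closed and $\PSL_2\C$-invariant, orbit closures of its points remain inside $\hat{\RR}_\ell$, so two points of $\hat{\RR}_\ell$ are identified in its Stein quotient if and only if they are identified in $\hat{\RR}\sslash\PSL_2\C$; this gives injectivity of $\iota$. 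Pulling $\iota$ back along the biholomorphism of \Cref{sCoordinateRing} yields the desired statement for $R_\ell \sslash \PSL_2\C$.

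The main point requiring care is the interface between the algebraic GIT quotient of $\hat{\RR}$ and the Stein categorical quotient of the analytic subvariety $\hat{\RR}_\ell$: one must check that the holomorphic analogue of closed-orbit separation on $\hat{\RR}_\ell$ can be read off from algebraic separation on $\hat{\RR}$. This rests on the fact that $\PSL_2\C$ is a connected complex reductive algebraic group, so that for algebraic orbits the Zariski and Euclidean closures agree, and Stein and GIT fibers coincide on algebraic subvarieties. Once this identification is granted, the finite-generation conclusion is formal and Heusener-Porti supplies the only non-trivial input.
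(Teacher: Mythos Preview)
Your proposal is correct and follows the same approach as the paper: both deduce the proposition directly from \cite[Corollary 2.3]{HeusenerPorti04} via the identification of $R_\ell$ (resp.\ $R_\ell/\Z_2$) with $\hat{\RR}_\ell\subset\hat{\RR}$ established in \S\ref{sCoordinateRing}. The paper in fact gives no further argument beyond the citation, whereas you spell out the passage from the algebraic GIT quotient of $\hat{\RR}$ to the Stein quotient of the closed invariant analytic set $\hat{\RR}_\ell$; this extra care is warranted but does not represent a different route.
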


\subsection{Representations framed along a multi-loop} \Label{sRepresentationsFramedAlongMultiloop}
In \S \ref{sRepresentataionsFramedAlongALoop}, we introduced the space of representations $\pi_1(S) \to \PSL_2\C$ framed along a single (oriented) loop,  constructed a quotient space by the $\PSL_2\C$ action, and realized 
 as an analytic subset of a complex affine space. 
In this section, we similarly consider the space of representations framed along a weighted multiloop, and then construct its Stein quotient by the action of $\PSL_2\C$. 

Let $m_1, \dots m_n$ be non-isotopic essential simple closed curves on $S$, and let $M$ be their union $m_1 \sqcup m_2 \sqcup \dots \sqcup m_n.$
Recall that  $\RR$ denotes the representation variety $\{ \pi_1(S) \to \PSL_2\C\}$.  
For each $i = 1, \dots, n$, pick a representative $\alpha_i \in \pi_1(S)$ representing $m_i$.   
Then, consider the  space $R_M$ of tuples $(\rho, (u_i, v_i)_{i = 1}^n)  \in R  \times (\CP^1)^{2n}$ where
\begin{itemize}
\item $\rho \in R$ is a homomorphism $\pi_1(S) \to \PSL_2\C$, and
\item $u_i, v_i \in \CP^1$ are different fixed points of   $\rho(\alpha_i)$ for $i = 1, \dots, n$.
\end{itemize}
As in the case of a single loop,  $\rho(\alpha_i)$ are {\it not} parabolic elements (but can be the identity).
Then $R_M$ is a closed analytic subvariety of $R  \times (\CP^1 \times \CP^1 \minus \Delta)^n$, where $\Delta$ denotes the diagonal as before. 
Given $(\rho, (u_i, v_i)_{i = 1}^n) \in R_M$, we can equivariantly extend $(u_i, v_i)$ to  the pairs of fixed points for all representatives of $m_1, \dots, m_n$ in $\pi_1(S)$. 
We call this extension a {\sf framing} of $\rho$ along the multiloop $M$. 

\subsubsection{Framed character varieties}
Now we assign a positive number ({\it weight}) to each loop of $M$.
Let $p$ be the number of components $m_i$ of $M$, such that the weight of $m_i$ is $\pi$ modulo $2\pi$.  
Without loss of generality, we can assume $m_1, \dots, m_n$ are the loops of $M$ with weight $\pi$ modulo $2\pi$. 
Then, $\Z_2^p$ acts biholomorphically on $R_M$ by switching the ordering of the fixed points of the framing along  $m_1, \dots, m_p$.
Note that this $\Z_2^p$-action has no fixed points in $R_M$.

Fix a complex number $w \in \C$ with $\lvert w \rvert > 1$.
As in \S \ref{sCoordinateRing}, let $\gam_{u_i, v_i, w} \in \PSL_2\C$ be, if the weight of $m_i$ is $\pi$ modulo $2\pi$, then the elliptic element of angle $\pi$ whose axis is the geodesic connecting $u_i$ to $v_i$, and otherwise, the hyperbolic element with the repelling fixed point $u_i$ and the attracting fixed point $v_i$ such that  $\gam_{u_i, v_i, w}$ is conjugate to the dilation $z \mapsto w z$. 
Then, define the mapping $R_M \to \RR \times (\PSL_2\C)^m$ by $(\rho, (u_i, v_i)_{i = 1}^n) \mapsto (\rho, (\gam_{u_i, v_i, w})_{i = 1}^n)$.
This mapping takes $R_M/ \Z_2^p$ onto its image $\hat{R}_M$ biholomorphically.   
Thus $R_M/ \Z_2^p$ is a closed analytic set in a finite-dimensional complex vector space. 
Therefore $R_M/ \Z_2^p$ is Stein.
The Lie group $\PSL_2\C$ acts analytically on $R_M/ \Z_2^p$, by conjugation on $\rho$. 
By this action, we obtain its Stein quotient $(R_M/ \Z_2^p)\sslash \PSL_2\C \eqqcolon X_M$. 
Thus $X_M$ is a Stein space.

The biholomorphic map $R_M/ \Z_2^p \to \hat{R}_M$ is equivariant w.r.t. the $\PSL_2\C$-action, $X_M$ is biholomorphic to the corresponding Stein quotient $\hat{R}_M\sslash \PSL_2\C$.

We denote, by $[\rho, (u_i, v_i)]$, the equivalence class of $(\rho, (u_i, v_i)) \in R_M$ in $X_M$.
 The subgroup $\mathcal{G}_M$ of $\MCG$ acts on $R_M$, and descends to an action on $X_M$.
 \subsubsection{Coordinates of the quotient space of representations framed along a multiloop}
 
Let $g_1, g_2, \dots, g_n$ be a standard generating set of the free group $\mathbb{F}^n$ of rank $n$, so that there are no relators. 
Every $(\rho, (u_i, v_i)_{i = 1}^n) \in R_M$ corresponds to a unique representation  $\pi_1(S) \ast \mathbb{F}^n \to \PSL_2\C$ such that
  \begin{itemize}
  \item $\gam \in \pi_1(S)$ maps to $\rho(\gam)$, and
  \item $g_i$ maps to $\gam_{u_i, v_i, w}$ for every $i = 1, \dots, n$.
\end{itemize}
By this correspondence, $R_M$ analytically embed into the space of representations  $\pi_1(S) \ast \F^n \to \PSL_2\C$.
As in \S \ref{sCoordinateRing}, by the quotient of the image $\RR_M$  by $\PSL_2\C$,  \cite[Corollary 2.3]{HeusenerPorti04} yields the coordinate ring of $X_M \cong \RR_M\sslash \PSL_2\C$. 
\begin{proposition}
There are finitely many elements $\gam_1, \gam_2, \dots, \gam_N$ of $\pi_1(S) \ast \mathbb{F}^n$, such that $\tr^2(\gam_1), \dots \tr^2 (\gam_N)$ form a coordinate ring of $X_M$.
\end{proposition}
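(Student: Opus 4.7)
The plan is to mirror the single-loop argument of Proposition \ref{CoordinateRing} and then prune the resulting generating set down to traces of simple closed curves on $S$ itself. By the biholomorphism established in Section \ref{sRepresentationsFramedAlongMultiloop}, $X_M$ identifies with $\hat{\RR}_M \sslash \PSL_2\C$ and sits as a closed Stein subvariety of the character variety of $\pi_1(S)\ast\F^n$. Applying \cite[Corollary 2.3]{HeusenerPorti04} to that ambient character variety, I obtain finitely many elements $\delta_1,\ldots,\delta_N\in\pi_1(S)\ast\F^n$ whose trace-squared functions generate its coordinate ring; restricting to $X_M$ gives a finite generating set for $\mathcal{O}(X_M)$.

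The key step is to replace any $\delta_j$ that involves the auxiliary generators $g_i$ by an element of $\pi_1(S)$. I would exploit the defining relations of $\hat\RR_M$: the image $\hat\rho(g_i)=\gamma_{u_i,v_i,w}$ has prescribed trace (determined by $w$ and by whether the weight is $\pi$ modulo $2\pi$) and commutes with $\hat\rho(\alpha_i)$ whenever the latter is nontrivial. Using the standard Fricke trace relations for $\SL_2\C$ together with the commutation $[g_i,\alpha_i]=1$, one rewrites $\tr^2\hat\rho(\delta_j)$ on $\hat\RR_M$ inductively as a polynomial in $\tr^2\rho(\gamma)$ for words $\gamma\in\pi_1(S)$, inserting $\alpha_i$ or $\alpha_i^{\pm 1}$ in place of each occurrence of $g_i^{\pm 1}$ and absorbing the discrepancy into the polynomial. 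Finally, because the coordinate ring of $\rchi$ is classically generated by trace-squared functions of simple closed curves on $S$, one may replace the resulting words by a finite collection of simple closed curves $\gam_1,\ldots,\gam_N$, yielding the generating set claimed by the proposition.

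The main obstacle is the reduction step in the non-weight-$\pi$ case: when the weight of some $m_i$ is not $\pi$ modulo $2\pi$, $\hat\rho(g_i)$ is determined by $\rho(\alpha_i)$ only up to inversion (swapping of attracting and repelling fixed points), so on the sublocus where $m_i$ is loxodromic, the forgetful map $X_M\to\rchi$ is genuinely a nontrivial finite cover. Verifying that the sheets of this cover are still separated by $\tr^2$ of simple closed curves on $S$ is the heart of the argument: one must show, using the Fricke reduction carefully at the level of the Stein quotient, that the orientation data of the framings becomes redundant once passing to $\PSL_2\C$-invariants on $\hat\RR_M$, so every generator produced by \cite{HeusenerPorti04} descends to a polynomial in $\tr^2\rho(\gam)$ for simple closed curves $\gam$ on $S$.
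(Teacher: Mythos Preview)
Your proposal attempts far more than the paper does, and in doing so aims at a target that is almost certainly a misprint. The paper's entire proof is the single clause ``Similarly to the case of bending along a weighted loop, by \cite[Corollary 2.3]{HeusenerPorti04},'' i.e.\ a direct invocation of Heusener--Porti applied to $\hat\RR_M\sslash\PSL_2\C$. In the single-loop Proposition~\ref{CoordinateRing} the generating elements lie in $\pi_1(S)\ast\Z$, not in $\pi_1(S)$; the parallel statement here should have $\gam_1,\dots,\gam_N\in\pi_1(S)\ast\F^n$, and the phrase ``of $\pi_1(S)$ corresponding to simple closed curves'' is most plausibly a slip. With that reading, the paper's proof is complete and your first paragraph already matches it.

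The reduction you attempt in the remaining paragraphs --- replacing every $\delta_j\in\pi_1(S)\ast\F^n$ by elements of $\pi_1(S)$ --- cannot succeed, and you have in fact put your finger on the reason. When some $m_i$ has weight not equal to $\pi$ modulo $2\pi$, the forgetful map $X_M\to\rchi$ is generically a nontrivial $2^{n-p}$-sheeted cover (this is exactly the content of the paper's remark in the introduction that forgetting the framing gives a finite-to-one covering away from a subvariety). Every function $\tr^2\rho(\gam)$ with $\gam\in\pi_1(S)$ is pulled back from $\rchi$ and is therefore constant on the fibres of this cover; such functions cannot separate the sheets, hence cannot generate $\mathcal O(X_M)$. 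Your proposed resolution, that ``the orientation data of the framings becomes redundant once passing to $\PSL_2\C$-invariants on $\hat\RR_M$,'' is precisely the opposite of what happens: the ordering $(u_i,v_i)$ versus $(v_i,u_i)$ is encoded by $\gam_{u_i,v_i,w}$ versus its inverse, and $\tr^2(\gamma\cdot g_i)$ genuinely distinguishes these for generic $\gamma$. So the obstacle you flag is not a difficulty to be overcome but a proof that the literal statement is false; take the intended reading and stop after your first paragraph.
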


  \subsection{The projection from the framed character variety to the character variety}\Label{sForgetfulMap}
  
In this subsection, we explain the relation between the famed character variety $X_M$ and the original character variety $\rchi$. 
Let  $\rchi_M^p$ be the subvariety of $\rchi$ consisting of representations $\rho\col \pi_1(S) \to \PSL_2\C$, such that at least one loop of $M$ corresponds to a parabolic or the identity element of $\PSL_2\C$. 
Let $X_M^p$ be the subvariety of $X_M$ whose representations are in $\rchi_M^p$. 
Every representation $\rho\col \pi_1(S) \to \PSL_2\C$ in $\rchi \minus \rchi_M$ has $2^N$ choices for a framing along $M$,  where are exactly $N$ is the number of components of $M$.
Then the projection map from $  \minus X_M^p$ to $\rchi \minus \rchi_M^p$ is a finite holomorphic covering map, and the covering degree is $2^N$.
Therefore, by the Removable Singularity Theorem (\Cref{RemovableSingularity}), $X_M \to \rchi$  is a $\C$-analytic branched covering map.

  \section{Bending a surface group representation into $\PSL_2\C$ inside the representation space into $\PSL_2\C \times \PSL_2\C$}\Label{sComplexifiedBending}
 Originally, bending deformation equivariantly bends a totally geodesic $\H^2$ along a measured lamination (\cite{Thurston-78, Epstein-Marden-87}), so that bending is in one direction and the bent $\H^2$ is locally convex.
 Moreover, one can extend it to an equivariant bending pleated surface along the pleated locus using bending cocycles (\cite{Bonahon96ShearingBendingSymplecticForm}). 
In both cases, bending is done along (bi-infinite) geodesics in $\H^3$ which are embedded in the pleated surfaces.  

In this section, we introduce a certain bending deformation of more general equivariant surfaces in $\H^3$. 
Using such a more general bending, define a complex-analytic bending map $X_M \to \rchi \times \rchi$ which complexifies the real-analytic bending map $\TT \to \rchi$.

\subsection{A complexification of the Lie group $\PSL_2\C$ regarded as a real Lie group.}
We first recall a complexification of $\PSL_2\C$ when regarded as a real Lie group. 
\begin{proposition}[See Proposition 1.39 in  \cite{Ziller10LieGroupsRepresentationTheory}  for example]\Label{ComplexifiedLieAlgebra} 
Regard $\mathfrak{psl}_2\C$ as a real Lie algebra.
Then the complexification of the Lie algebra $\mathfrak{psl}_2\C$ is isomorphic to $\mathfrak{psl}_2\C \oplus (\mathfrak{psl}_2\C)^\ast$  by the mapping given by  $(u, 0) \mapsto (u, I u)$ and $(0, v) \mapsto (v, -I v)$, where $(\mathfrak{psl}_2\C)^\ast$ is the complex conjugate of $\mathfrak{psl}_2\C$ and  $I$ is the complex multiplication of $\mathfrak{psl}_2\C$.
\end{proposition}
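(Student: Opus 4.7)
The plan is to exhibit the isomorphism explicitly as an eigenspace decomposition of the complex structure $I$ after complexification. Denote by $\mathfrak{g}$ the Lie algebra $\mathfrak{psl}_2\C$ regarded as a real Lie algebra of dimension six; its complex multiplication $I\col \mathfrak{g} \to \mathfrak{g}$ is an $\R$-linear Lie algebra automorphism with $I^2 = -\mathrm{id}$, satisfying the compatibility $I[x,y] = [Ix,y] = [x,Iy]$ by $\C$-bilinearity of the original bracket of $\mathfrak{psl}_2\C$.  Write $\mathfrak{g}_\C := \mathfrak{g} \otimes_\R \C$ for the real complexification, and represent a typical element as a pair $(a,b)$ standing for $a + ib$ with $a, b \in \mathfrak{g}$.

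First I would extend $I$ to a $\C$-linear operator $\tilde{I}$ on $\mathfrak{g}_\C$; since $\tilde{I}$ commutes with scalar multiplication by $i \in \C$ and still satisfies $\tilde{I}^2 = -\mathrm{id}$, the complexification splits as a direct sum
\[
\mathfrak{g}_\C = V_+ \oplus V_-
\]
into the $(\pm i)$-eigenspaces of $\tilde{I}$. A direct calculation from $\tilde{I}(a,b) = (Ia, Ib)$ identifies (up to an overall sign convention) $V_+ = \{(u, Iu) : u \in \mathfrak{g}\}$ and $V_- = \{(v, -Iv) : v \in \mathfrak{g}\}$. Using $\tilde{I}[x,y] = [\tilde{I}x,y] = [x,\tilde{I}y]$ one checks immediately that $[V_+, V_+] \subset V_+$, $[V_-, V_-] \subset V_-$, and that the cross bracket $[V_+, V_-]$ vanishes (since it would have to be both an $i$- and a $-i$-eigenvector of $\tilde{I}$), so the decomposition is one of complex Lie algebras.

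Next I would identify each summand with the desired factor. The map $\mathfrak{psl}_2\C \to V_+$ sending $u \mapsto (u, Iu)$ is $\R$-linear and intertwines the original $I$ on the source with multiplication by $i$ on the target, hence is $\C$-linear; it is moreover a Lie algebra homomorphism because $I$ is a Lie algebra automorphism of $\mathfrak{g}$.  Similarly, the map $(\mathfrak{psl}_2\C)^\ast \to V_-$ sending $v \mapsto (v, -Iv)$ intertwines the conjugate complex structure $-I$ on the source with $i$ on the target and is a Lie algebra isomorphism for the same reason.  Taking the direct sum of these two $\C$-linear Lie algebra isomorphisms yields exactly the formula $(u,0) \mapsto (u, Iu)$ and $(0,v) \mapsto (v, -Iv)$ of the statement.

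There is no substantive obstacle; the only non-routine input is the $\C$-bilinearity relation $I[x,y] = [Ix,y] = [x,Iy]$, which simultaneously kills the cross bracket $[V_+, V_-]$ and promotes each eigenspace to a complex Lie subalgebra isomorphic to $\mathfrak{psl}_2\C$ or its complex conjugate.  Verifying that the signs in the explicit pair-formula match the chosen eigenspace convention is the only bookkeeping required, and the general statement is available in the cited reference of Ziller.
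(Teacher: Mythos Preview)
The paper does not supply its own proof of this proposition; it is stated as a standard fact with a reference to Ziller's lecture notes and then immediately used to justify the embedding $c\colon \PSL_2\C \to \PSL_2\C \times \PSL_2\C$, $Y \mapsto (Y, Y^\ast)$. Your argument, via the $\pm i$-eigenspace decomposition of the $\C$-linear extension of $I$ to $\mathfrak{g}_\C$, is exactly the standard proof and is correct in substance.

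One small slip worth fixing: $I$ is not a Lie algebra \emph{automorphism} of $\mathfrak{g}$, since $[Ix, Iy] = I^2[x,y] = -[x,y]$. The property you actually need, and correctly state, is the compatibility $I[x,y] = [Ix, y] = [x, Iy]$ coming from $\C$-bilinearity of the original bracket. That relation alone yields both $[V_+, V_-] = 0$ and the closure of each eigenspace under the bracket, so the conclusion stands once you drop the word ``automorphism'' and verify the $\C$-linearity (resp.\ antilinearity) and bracket-compatibility of $u \mapsto (u, \pm Iu)$ directly from it; the sign bookkeeping you flagged is the only thing to watch.
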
 
As we discussed in \S \ref{sComplexBendingVarieties},  
we regard  $\PSL_2\C$ as a real Lie group, and we complexify $\PSL_2\C$ by
\begin{gather*}
\function{c}{\PSL_2\C}{ \PSL_2\C \times \PSL_2\C}{Y}{(Y, Y^\ast)},  \\
\end{gather*} where $Y^\ast$ denote the complex conjugate of $Y$, 
so that it corresponds to \Cref{ComplexifiedLieAlgebra}. 
Then $c$ is holomorphic in the first factor and anti-holomorphic in the second factor.
Thus $c$ is, in particular, a proper real-analytic embedding of $\PSL(2,\C)$ into the complex Lie group $\PSL_2\C \times \PSL_2\C$.

\subsection{Bending framed representations}\Label{sBendingFramedRepresentataions}
We first define a complex bending of representations framed along a single loop. 
Let $\ell$ be a loop on $S$, and we fix a weight $w > 0$ of $\ell$. 
Fix $\alpha \in \pi_1(S)$ representing $\ell$.
Let 
$[(\rho, u, v)] \in X_\ell$, where  $\rho\col \pi_1(S) \to \PSL_2\C$ and $u, v$ are distinct
 fixed points of $\rho(\alpha)$. 
Let $(\rho, \rho)\col \pi_1(S) \to \PSL_2\C \times \PSL_2\C$  denote the diagonal representation given by $\gam \mapsto (\rho(\gam), \rho(\gam))$.  

Recall that $(u, v)$ generates a  $\rho$-equivariant  framing $f$ along $\ell$ and $\Lam_\ell$ denotes the subset of $\pi_1(S)$ corresponding to $\ell$. 
That is,  for every element $\gam \in \Lam_\ell$, an ordered pair $(u_\gam, v_\gam) \in  \CP^1 \times \CP^1$ of different fixed points of $\rho(\gam)$ is assigned $\rho$-equivariantly. 
Consider the oriented geodesic $g_\gam = (u_\gam, v_\gam)$ in $\H^3$ connecting $u_\gam$ to $v_\gam$ for all $\gam \in \Lam_\ell$. 
Those equivariant geodesics $\{g_\gam\}$ will be the axes of the bending.

First, we define the bending of $[(\rho, u,   v)] \in X_\ell$ alogn $\ell$. 
Pick a $\rho$-equivariant surface $\Sigma\col \ti{S} \to \H^3$ which is piecewise-embedding.
For instance, pick a triangulation of $S$, which induces a $\pi_1(S)$-invariant triangulation of $\ti{S}$; first construct a $\rho$-equivariant map $\Sigma^1$ on the one-skeleton of the triangulation on $\ti{S}$, such that each edge of the triangulation is embedded in $\H^3$ as a geodessic segment.
 Then equivariantly extend $\Sigma^1$ to the interiors of the triangles as totally geodesic triangles in $\H^3$.

 Let $\ti\ell$ be the lift of $\ell$ to the universal cover $\ti{S}$ preserved by $\alpha \in \Lam_\ell $. 
 Then we may, in addition, assume that
 \begin{enumerate}
 \item $\Sigma$ is an embedding at all points on $\ti\ell$ (possibly) except at the locally finitely many points  (corresponding to finitely many points on $\ell$);
 \item  $\Sigma$ takes $\ti{\ell}$ into the bi-infinite geodesic $(u, v)$ connecting $u$ to $v$;
 \item there is a segment $s$ in $\ti\ell$ embedded by $\Sigma$ such that the orientation of the restriction of $\Sigma$ to $s$ agrees with the orientation of the oriented geodesic $(u, v)$.
\end{enumerate}

Indeed, in the construction of $\Sigma$ above,  we can start with a triangulation of $S$ such that $\ell$ is embedded in the one-skeleton of the triangulation, and we can first construct $\Sigma^1$ which takes $\ti\ell$ to the oriented geodesic $(u, v)$. 
Moreover, one can modify this mapping $\Sigma^1$ so that it satisfies the conditions (1) and (3), if necessary, by refining the triangulation and making $\Sigma^1 \vert \ti\ell$ non-injective and making adjacent trigngles along an edge segment in $\ti\ell$ injectively map into the different sides of the geodesic $(u, v)$ in a hyperbolic plane in $\H^3$.

\begin{figure}
\begin{overpic}[scale=.03
] {PositivelyBendingDirection} 
    \put(10 , 24 ){$\H^3$}  
    \put(11.5 , 14.5 ){$\Sigma(s)$}  
    \put(40 , 24 ){$\H^3$}  
    \put(73 , 24 ){$\H^3$}  
            \put(55 ,  25 ){$u$}  
         \put(43 , 0 ){$v$}  
        \put(88 ,  25 ){$u$}  
         \put(75 , 0 ){$v$}  
         \put(79 , 20 ){\contour{white}{\textcolor{darkgray}{Exterior}} } 
              \put(8 ,  10){$H$}  
      \end{overpic}
\caption{Illustration of the bending direction. Left: The oriented hyperbolic plane containing the oriented geodesic $(u, v)$. Middle: The normal direction of the hyperbolic plane. Right: A positive bending deformation of the oriented hyperbolic plane along $(u, v)$.}\Label{fBendingDirection}  \end{figure}

 For every  $\theta \in \R$, we can equivariantly bend  $\Sigma$ along the equivariant oriented axes $\{g_\gam\}$ by angle $\theta$. 
 Then we can accordingly bend the representation $\rho\col \pi_1(S) \to \PSL_2\C$ so that the bent surface is equivariant via the bent representation.

We describe this bending construction more precisely.
 The (positive) bending direction given by the orientation of the hyperbolic three-space $\H^3$, the orientation of the loop $\ell$, and the oriented geodesic $(u, v)$. 

By the second condition, the orientation of a segment $s$ of $\ti\ell$ matches with the orientation of the oriented geodesic $(u, v)$  by $\Sigma$.
Let $H$ be the oriented hyperbolic plane $H$ in $\H^3$ containing the geodesic segment $\Sigma(s)$ (and thus the oriented geodesic $(u, v)$) so that the oriented tangent plane of $\Sigma$ at a point in $s$ is identified with $H$  (\Cref{fBendingDirection}, Right). 
The normal direction of $H$ is given by the orientation of $\H^3$ and the orientation of $H$ (\Cref{fBendingDirection}, Middle).
The bending of an oriented hyperbolic plane $H$ along the oriented geodesic $(u, v)$ is in the {\it positive direction}, if the normal direction of the hyperbolic plane is in the exterior side of (small) bending (\Cref{fBendingDirection}, Right).

 We can equivariantly bend $\Sigma\col \ti{S} \to \H^3$ along every lift of $\ell$ to $\ti{S}$ by angle $w$ as follows. 
Let $\Phi\col \ti{S} \to S$ denote the universal covering map.
Then  the surface $\Sigma'$ obtained by bending $\Sigma$ along $\ell$ by angle $w$ is the mapping $\ti{S} \to \H^3$ such that
\begin{itemize}
 \item $\Sigma'$ coincides with  $\Sigma$ on each component of $\ti{S} \minus \Phi^{-1} (\ell)$ up to an element of $\PSL_2\C = \Isom^+ \H^3$, and
\item for every lift $\ti\ell$ of $\ell$ to $\ti{S}$, letting  $P, Q$ be adjacent components of  $\ti{S} \minus \Phi^{-1} (\ell)$ accross $\ti\ell$, if we notmalize $\Sigma'$ so that $\Sigma = \Sigma'$ on $P$ by $\PSL_2\C$, then $\Sigma' \vert Q$ is obtained from $\Sigma \vert Q$ by postcomposing the elliptic element about the oriented geodesic containing $\Sigma(\ti\ell)$ by angle $w$, where the positive direction is given as above by the support tangent plain of $\Sigma$ at a point in a approprite segment $s$ of $\ti\ell$ satisfying (2). 
\end{itemize}
This bent surface $\Sigma'\col \ti{S} \to \H'^3$ is uniquely defined up to postcomposing with an element in $\PSL(2, \C)$ (as $\Sigma$ is). 

This bending construction of $\Sigma'$ form $\Sigma$ along $(\ell, w)$ induces a compatible representation $\rho' \coloneqq b_{\ell, w} (\rho, u, v)$ by ``bending'' the framed representation $[\rho, u, v]$ along $\ell$ by angle $w$, so that $\Sigma'$ is $\rho'$-equivariant and $\rho'$ is a unique up to conjugation by $\PSL_2\C$.
Namely, the {\it bent representation} $\rho'$ is a representation satisfying the following:
\begin{enumerate}[label=(\roman*)]
\item For every component $F$ of $S \minus \ell$, the restriction of $\rho$ to $\pi_1(F)$ coincides with the restreiction of $\rho'$ to $\pi_1(F)$ up to $\PSL(2, \C)$. 
\item For every lift $\ti\ell$ of $\ell$, letting $P, Q$  be adjacent components of  $\ti{S} \minus \Phi^{-1} (\ell)$ accross $\ti\ell$ and letting $\Gamma_P$ and $\Gam_Q$ denote the subgroup of $\pi_1(S)$ preserving $P$ and $Q$, respectively, 
if we normalize so that $\rho = \rho'$ on $\Gamma_P$, then $\rho' \vert \Gamma_Q$ is obtained from $\rho \vert \Gamma_Q$ by conjugating by the ellipeitc element of angle $w$, where the direction of the rotation coincides with the bending direction describe above.
\end{enumerate}

Next, we show that this bent representation $b_{\ell, w} (\rho, u, v)\col \pi_1(S) \to \PSL_2\C$ is independent of the choice of the $\rho$-equivariant map $\Sigma$, in order to provide the uniqueness.  
 In fact, in the case that the restriction of $\rho$ to $\pi_1(F)$ is irreducible for every component $F$ of $S\minus \ell$ (only one or two components), we can define the bending of $(\rho, u, v)$ along $\ell$ without picking a $\rho$-equivariant surface $\Sigma$. 
In order to define the bending direction, we can simply identify the $\ti\ell$ with the oriented geodesic $(u, v)$ respecting the orientation. 
We embed a regular neighborhood of $\ti\ell$ into $\H^3$, extending this identification. 
We can define the same positive bending direction via this (simpler) identification. 
 Indeed, for such generic representations, the same bent representation $\rho'\col \pi_1(S) \to \PSL_2\C$  is defined only with the conditions (i), (ii) without a $\rho$-equivariant mapping $\Sigma$, since the stabilizer of the restriction $\rho$ to $\pi_1(F)$ is trivial for every component $F$ fo $S \minus \ell$.  

 For general $[(\rho, u, v)] \in X_\ell$, we first pick a $\rho$-equivariant surface $\Sigma\col \ti{S} \to \H^3$ that satisfies the properties (1), (2), (3). 
Then, pick a sequence  $[(\rho^k, u^k, v^k)] \in X_\ell$, which converges to $[(\rho, u, v)] \in X_\ell$ as $k \to \infty$ such that the restriction of $\rho^k$ to $\pi_1(F)$ is irreducible for every connected component  $F$ of $S \minus \ell$.
 In addition, for each $k = 1,2, \dots$,  we can take a piecewise-smooth $\rho^k$-equivariant surface $\Sigma^k\col \ti{S} \to \H^3$ that satisfies the properties (1), (2), (3), such that $\Sigma^k$ converges to $\Sigma$ as $k \to \infty$. 
By continuity, the uniqueness of the bending of $[(\rho^k, u^k, v^k)]$ along $(\ell, w)$ implies the uniqunnes of bending of  $[(\rho, u, v)]$ along $(\ell, w)$, as desired.

  We denote the representation $\pi_1(S) \to \PSL_2\C$ obtained by bending $[\rho, u, v]$ along $\ell$ by angle $\theta$ by $b_{\ell, \theta}(\rho, u, v)$.    
 Note that, if we reverse the order of $u$ and $v$ of the framing $(u, v)$, then the positive bending direction is reversed.

We now define a complex bending map $B_\ell \col X_\ell \to \rchi \times \rchi$ by $B_{\ell, w}(\rho, u, v) = ( b_{\ell, w}(\rho, u, v),   b_{\ell, -w}(\rho, u, v) ) $.
Note that, in the first factor and the second factor, the bending $\rho$ is equivariantly done along the same axes and by the same angle,  but in opposite directions (\Cref{fBendingOppositely}).
\begin{figure}
\begin{overpic}[scale=.04
] {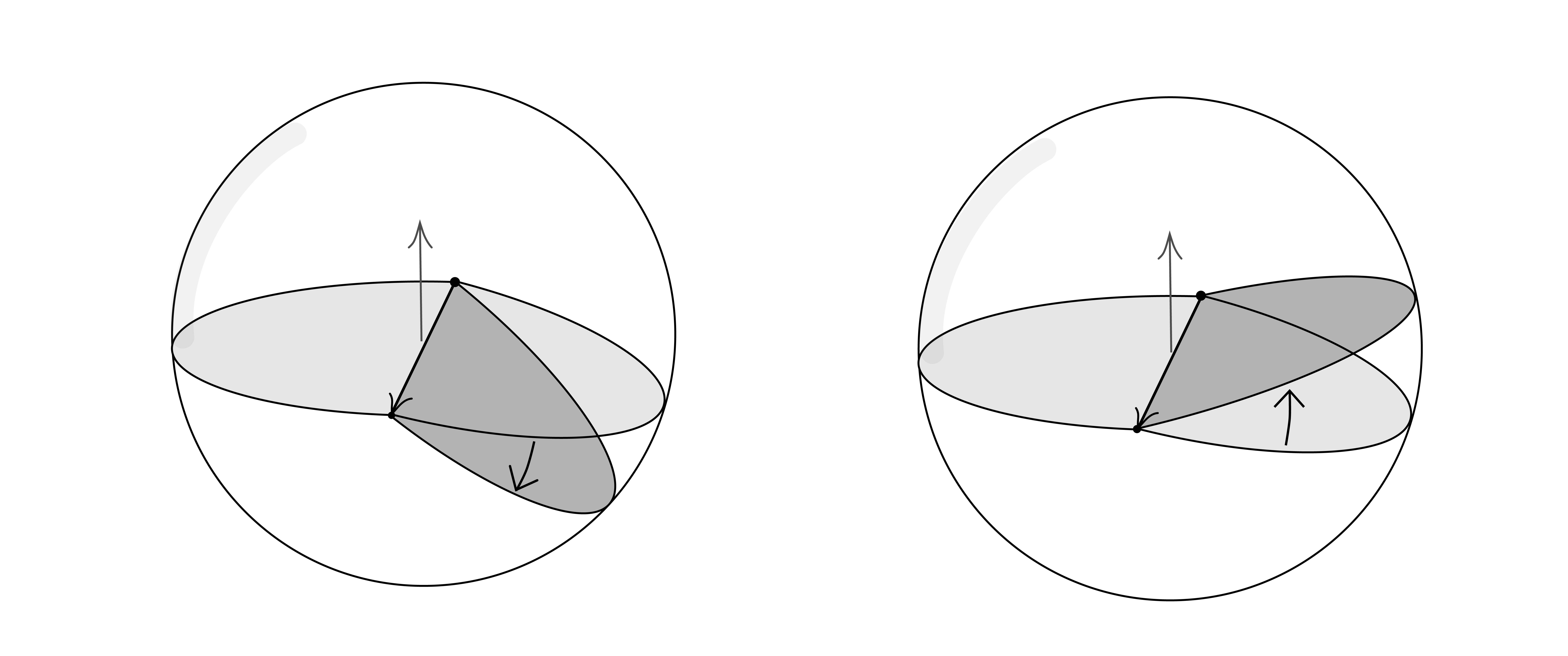} 
   \put(25 ,30 ){$\H^3$}  
   \put(20 ,18){$\H^2$}  
      \put(65 ,18){$\H^2$}  
   \put(70 , 29 ){$\H^3$}  
 \put(23 , 13 ){$v$}  
 \put(29 , 25 ){$u$}  
 \put(71 , 12 ){$v$}  
 \put(75.5 , 24 ){$u$}  
        \put(15,2){\contour{white}{Positive bending}}
\put(60,2){\contour{white}{Negative bending}}

      \end{overpic}
\caption{The complex bending map bends a representation in opposite directions in different factors.}\Label{fBendingOppositely}
\end{figure}

The bent representation is well-defined up to conjugation by an element of $\PSL_2\C \times \PSL_2\C$, and thus  $B_\ell (\rho, u, v) \in \rchi \times \rchi$ is well-defined.
We remark that, if $\rho\col \pi_1(S) \to \PSL_2\C$ is Fuchsian, then the representation of $B_\ell (\rho, u, v)$ in the first factor $\rchi$ is the complex conjugate of that in the second factor (given by complex conjugate of matrix entries as explained in \S\ref{sComplexBendingVarieties}).

For a weighted multiloop $M$ on $S$, we can similarly define the complex bending map $B_M \col X_M \to \rchi \times \rchi$ as follows.
Let $m_1, \dots, m_n$ be the weighted loops of $M$.
Pick $\gam_i \in  \pi_1(S)$ reprersenting $m_i$. 
Let $\ti{m}_i$ be a $\gam_i$ invariant lift of $m_i$ to the universal cover $\ti{S}$.
Let  $[\rho, (u_i, v_i)_{i = 1}^n] \in X_M$, where $(u_i, v_i)$ be the fixed point of $\rho(\gam_i)$. 
Then the oriented geodesic $g_i$ connecting  $u_i$ to $v_i$, equivariantly extends to a system of bending axes corresponding to all lifts of $m_i$ to $\ti{S}$. 
Find a $\rho$-equivariant surface $\Sigma\col \ti{S} \to \H^3$ such that $\ti{m}_i$ maps into its corresponding oriented axes $g_i$.

Let $\theta_1, \dots, \theta_n$ be real numbers. 
We can similarly bend the $\rho$-equivariant surface $\Sigma\col \ti{S} \to \H^3$ along the oriented geodesics $g_1, \dots, g_n$ and their orbit geodesics by angles $\theta_1, \dots, \theta_n$, respectively, in the positive bending direction (defined by the orientation of $\H^3$ and the orientations of the geodesics).
Since we bend $\Sigma$ in an equivariant manner,  the new bent surface $\Sigma^+\col \ti{S} \to \H^3$ is also equivariant via a unique representation.
We denote the bent representation by $$b_{(m_i, \theta_i)}(\rho, (u_i, v_i)_{i = 1}^n) = b_M^+(\rho, (u_i, v_i)_{i = 1}^n) \col  \pi_1(S) \to \PSL_2\C.$$
Similarly, we can bend $\Sigma$ along the same axes by the same angles but in opposite directions, and we obtain another bent surface $\Sigma^-\col \ti{S} \to \H^3$.
Then $\Sigma^-$is also equivariant via a unique representation $$b_M^-(\rho, (u_i, v_i)_{i = 1}^n) = b_{(m_i, -w_i)}(\rho, (u_i, v_i)_{i = 1}^n)\col \pi_1(S) \to \PSL_2\C.$$
By combining those two bending of framed representations, we obtain the bending map $B_M \col X_M \to \rchi \times \rchi$ by $$B_M(\rho, (u_i, v_i)_{i = 1}^n) = (b_{(m_i, w_i)}(\rho, (u_i, v_i)_{i = 1}^n), b_{(m_i, -w_i)_i}(\rho, (u_i, v_i)_{i = 1}^n)).$$
Then the mapping $\ti{S} \to \H^3 \times \H^3$ defined by $x \mapsto (\Sigma^+(x), \Sigma^-(x))$ is equivariant via $B_M(\rho, (u_i, v_i)_{i = 1}^n)\col \pi_1(S) \to \PSL_2 \times \PSL_2\C$.

\subsection{Equivariant property}

\begin{lemma}\Label{EquivariantCBending}
Let $M$ be a weighted multiloop on $S$.
Let $G_M$ be the subgroup of the mapping class group $\MCG(S)$, which preserves $M$. 
Then $B_M\col X_M \to \rchi \times \rchi$ is $G_M$-equivariant. 
\end{lemma}
\begin{proof}
Recall that $G_M$ acts on $X_M$ by marking change. 
Therefore $b_{(m_i, w_i)}\col X_M \to \rchi$ and $b_{(m_i, -w_i)}\col X_M \to \rchi$ are both $G_M$-equivariant, since the equivariant construction of those mappings respect the $G_M$-action. 
Hence $B_M$ is also $G_M$-equivariant. 
\end{proof}

\subsection{Support planes and spaces}\Label{sSupportSpaces}
For a marked hyperbolic surface $\tau$  homeomorphic to $S$ and a measured lamination $L$ on $\tau$, 
we have a $\pi_1(S)$-equivariant bending map $\beta_{\tau, L}\col \H^2 \to \H^3$ which is ``locally convex''. 
Letting $\ti{L}$ be the $\pi_1(S)$-invariant measured lamination on the universal cover $\H^2$ of $\tau$.
Then, for each component $P$ of $\H^2 \minus \ti{L}$, the mapping $\beta_{\tau, L}$ embeds $P$ into a totally geodesic hyperbolic plane $P$ in $\H^3$. 
Such a hyperbolic plane is a {\sf support plane} for $\beta_{L, \tau}$.  (See \cite{Epstein-Marden-87} for more general support planes.)
On the other hand,  this equivariant system $\{H_P\}_P$ of totally geodesic hyperbolic planes, indexed by the components, determines the original bending map $b_{\tau, L}\col \H^2 \to \H^3$. 

 In \S \ref{sBendingFramedRepresentataions}, we bend framed representations $\eta = [\rho, (u_i, v_i)]$ in $X_M$ along  a weighted multiloop $M$, and obtained a representation $\pi_1(S) \to \PSL_2\C \times \PSL_2\C$.
 As the symmetric space associated with $\PSL_2\C \times \PSL_2\C$ is the product $\H^3 \times \H^3$,   we consider a system of {\sf supporting hyperbolic three-spaces} in the product $\H^3 \times \H^3$ as follows. 
For every component $P$ of $\ti{S} \minus \ti{M}$, the restriction of  $\Sigma^+ $ to $P$ coincides with  the restriction of  $\Sigma^- $ to $P$ composed with an element $\gam$ of $\PSL_2\C$.
Therefore, the restriction of the surface $(\Sigma^+, \Sigma^-)\col \til{S} \to \H^3 \times \H^3$ to $P$ is contained in a totally geodesic copy $H_P$ of $\H^3$ given by $ \{ (x, \gam x ) \mid x \in \H^3 \} \subset \H^3 \times \H^3$. 

 Hence, we obtain an equivariant collection of supporting hyperbolic 3-spaces  $H_P$ for all components $P$ of $\ti{S} \minus \ti{M}$.
 We call this collection $\{H_P\}_P$ the {\sf (equivariant) bending support system} of $B_M$ at $\eta$.
 Let $G_P$ denote the subgroup of $\pi_1(S)$ consisting of the elements preserving the $P$. 
 Then $H_P$ is preserved by the restriction of the bent representation $$B_M(\rho, (u_i, v_i)_{i = 1}^n)\col \pi_1(S) \to \PSL_2\C \times \PSL_2\C$$ to the subgroup $G_P$.

Suppose that $P$ and $P'$ are adjacent components of $\ti{S} \minus \ti{M}$ across a lift $\ti{m}$ of a loop $m$ of $M$. 
Let $w$ be the weight of $m$. 
Then, in $\H^3 \times \H^3$, the support spaces $H_P$ and $H_{P'}$ intersect in a geodesic at angle $w$  ({\sf complex bending axis}), which corresponds to the bending axis in $\H^3$ induced by the framing in the definition of $B_M$ (\Cref{fSupportSpacesIntersecting}). 
 In particular, if the weight of $m$ is a multiple of $\pi$, then $H_P = H_{P'}$.
Indeed, for an elliptic element $e \in \PSL_2\C$ with rotation angle $\pi$, we have
 $$\{ (x, x) \in \H^3 \times \H^3 \mid x \in \H^3 \} = \{ (e x, e^{-1}x) \in \H^3 \times \H^3\,  \vert\, x \in \H^3)\}.$$

 \begin{figure}
\begin{overpic}[scale=.12, 
] {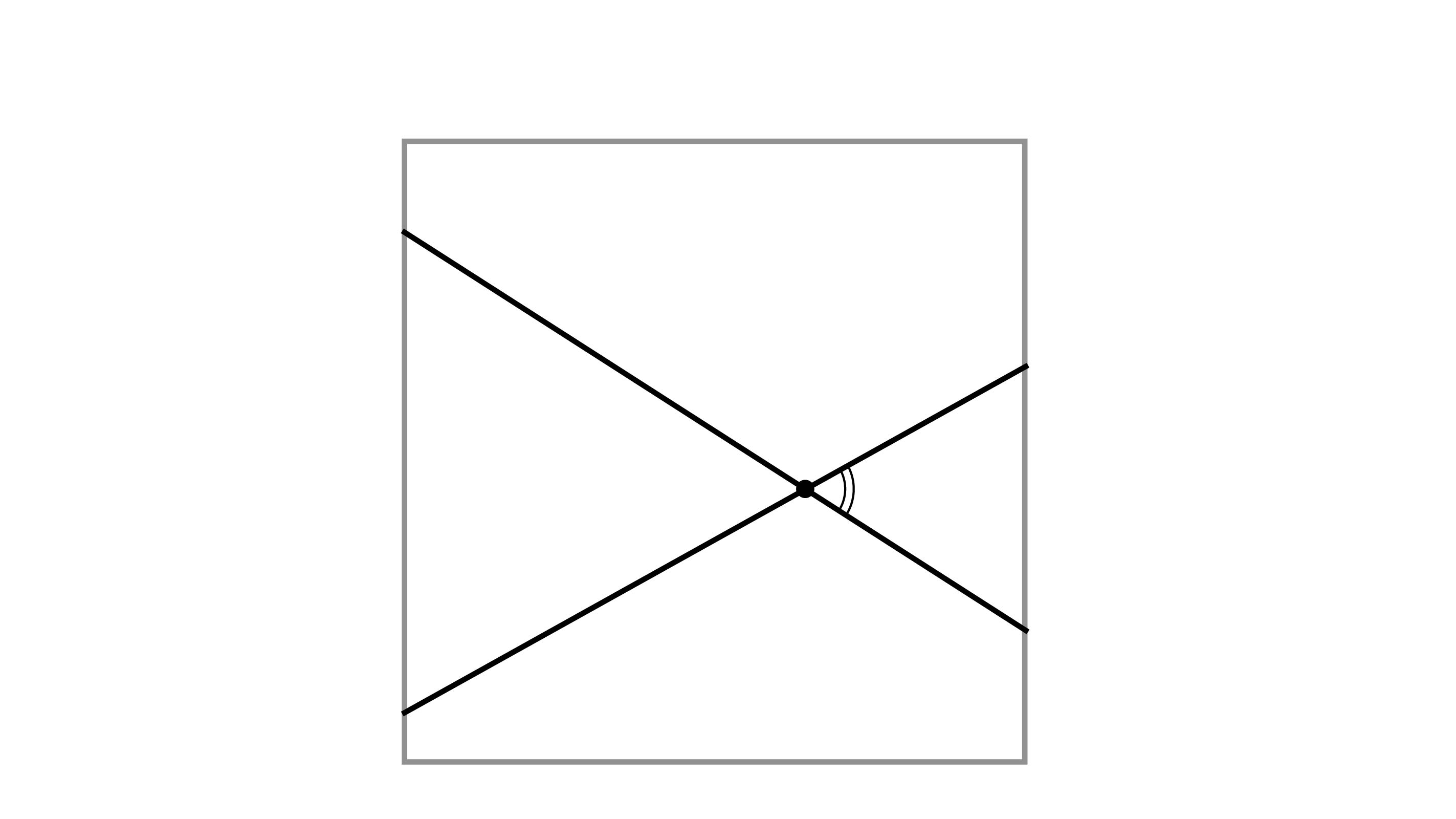} 
\put(45 , 50){\textcolor{darkgray}{$\H^3$}}  
\put(17 , 22){\textcolor{darkgray}{$\H^3$}}  
  \put(40 , 35 ){\textcolor{black}{$H_P$}}  
  \put(40 , 8 ){\textcolor{black}{$H_{P'}$}}  
   \put(60.5 , 20 ){\textcolor{black}{$w$}}  
      \end{overpic}
\caption{The intersection angle $w$ of totally geodesic copies $H_P, H_{P'}$ of $\H^3$ in $\H^3 \times \H^3$.}\Label{fSupportSpacesIntersecting}
\end{figure}

\begin{definition}
Let $\xi\col \pi_1(S) \to \PSL_2\C \times \PSL_2\C$ be a representation. 
A {\sf support system of}  $\xi$ with respect to $M$ is
an equivariant collection of  totally geodesic hyperbolic spaces $H_P$ in $\H^3 \times \H^3$ for all components $P$ of $\ti{S} \minus \ti{M}$ such that
the restriction of $\xi$ to $G_P$ preserves $H_P$ for all components $P$ of $\ti{S} \minus \ti{M}$.
\end{definition}
 
 In general, a representation $\pi_1(S) \to \PSL_2\C \times \PSL_2\C$ may have no support system or many support systems.
On the other hand, we will prove that the support system is uniquely determined by $B_M(\rho, (u_i, v_i)_{i = 1}^n)$ in most cases; see \Cref{UniqueSupportSpace}. 
   
\section{Complex bending maps are almost injective}\Label{sInjectivity}
In this section, we prove the injectivity of the complex bending map $B_M\col X_M \to \rchi \times \rchi$ when restricted to the complement of certain subvarieties. 

Let $M$ be a weighted multiloop on $S$, and let $n$ be the number of loops of $M$. 
Let $X_M^p$ be  the subset of $X_M$ consisting of the framed representations $(\rho, (u_i, v_i)_{i = 1}^n)$ such that $\tr^2 \rho(m) = 4$ for, at least, one loop $m$ of $M$, i.e. $\rho(m)$ is either a parabolic element or the identity. 
As it is an algebraic equation,  $X_M^p$ is an analytic subvariety of $X_M$. 

Let  $X_M^w$ be the subset consisting of the framed representations $(\rho, u, v)$ such that, for some component $F$ of $S \minus M$,  the restriction of $\rho$ to  $\pi_1(F)$ is {\it weakly reducible}, i.e. the image is, up to a finite index, reducible.  

 Given a complex Lie subgroup $G$ of $\PSL_2\C$, the set of all representations $\rho\col \pi_1(S)  \to G$ gives a subvariety of $\rchi$.  
 The reducible representations $\pi_1(S) \to \PSL_2\C$ form a subvariety of $\rchi$.
A representation $\rho\col \pi_1(S) \to \PSL_2\C$ is weakly reducible but not reducible, if and only if $\Im \rho$ preserves a pair of points on $\CP^1$ but it does not fix the pair pointwise. 
(If $\Im \rho$ preserves a triple of distinct points on $\CP^1$, then $\Im \rho$ is a finite group.)
Thus, the set of weakly reducible representations forms a subvariety of $\rchi$. 

Thus $X_M^w$ is also an analytic subset of $X_M$. 
We prove that the injectivity of the complex bending map holds in the complement of those analytic subsets. 
\begin{theorem}\Label{InjectivityAlomstEverywhere}
Let $M$ be a weighted oriented multiloop on $S$. 
Then, the complex bending map $B_M\col X_M \to \rchi \times \rchi$ is injective on $X_M \minus (X_M^p \cup X_M^w)$. 
\end{theorem}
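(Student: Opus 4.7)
The plan is to recover $(\rho,(u_i,v_i))$ from its image $(\eta_1,\eta_2)=B_M(\rho,(u_i,v_i))\in\rchi\times\rchi$ using the support system $\{H_P\}$ of totally geodesic hyperbolic $3$-spaces in $\H^3\times\H^3$ constructed at the end of \S\ref{sBendingFramedRepresentataions}. First I would fix a distinguished component $F_0$ of $S\setminus M$ and a basepoint in its lift, and normalize so that the restriction of $(\eta_1,\eta_2)$ to $\pi_1(F_0)$ is the diagonal representation $(\rho|_{F_0},\rho|_{F_0})$ in the diagonal copy of $\PSL_2\C$ inside $\PSL_2\C\times\PSL_2\C$. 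Then for any other component $F$, reached from $F_0$ by crossing oriented loops $m_{i_1},\dots,m_{i_k}$ of $M$, the restriction of $(\eta_1,\eta_2)$ to $\pi_1(F)$ is a pair of conjugates of $(\rho|_F,\rho|_F)$, where the conjugating element in the first factor is the product of elliptic rotations along the framed axes $[u_{i_j},v_{i_j}]$ by angles $+w(m_{i_j})$, and the conjugating element in the second factor is the corresponding product of rotations by angles $-w(m_{i_j})$.

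Second, suppose $(\rho_s,(u_i^s,v_i^s))$ for $s=1,2$ are two framed representations in $X_M\setminus(X_M^p\cup X_M^r)$ mapping to the same $(\eta_1,\eta_2)$. Since $(\rho_s,(u_i^s,v_i^s))\notin X_M^r$, the restriction $\rho_s|_{\pi_1(F)}$ is reductive for no component $F$ --- so in the sense used in the paper, the centralizer of each restriction in $\PSL_2\C$ is small enough that the conjugacy class in $\rchi$ determines the representation up to a unique element of $\PSL_2\C$. Applied to $F_0$ and the diagonal copy, this lets me normalize so that $\rho_1|_{\pi_1(F_0)}=\rho_2|_{\pi_1(F_0)}$.

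Third, I would propagate agreement across each loop $m_i$ of $M$ separating $F_0$ from an adjacent component $F$. The transition elements in the first and second factors of $\PSL_2\C\times\PSL_2\C$ are read off from the restrictions of $(\eta_1,\eta_2)$ to $\pi_1(F)$ versus $\pi_1(F_0)$; these elements are elliptic rotations by $\pm w(m_i)$ about a common geodesic axis, which therefore recovers the axis $[u_i,v_i]$ in $\H^3$. Because $\tr^2\rho(m_i)\neq 4$ (as $(\rho,(u_i,v_i))\notin X_M^p$), the element $\rho(m_i)$ is non-parabolic and has exactly two fixed points on $\CP^1$, which must be the endpoints of this axis; and when $w(m_i)\not\equiv\pi\pmod{2\pi}$, the sign of the bending angle in the two factors distinguishes $u_i$ from $v_i$, so the ordered framing is recovered. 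An inductive walk through the adjacency graph of components of $S\setminus M$ then yields $(\rho_1,(u_i^1,v_i^1))=(\rho_2,(u_i^2,v_i^2))$ in $X_M$.

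The main obstacle is the case when $w(m_i)\equiv\pi\pmod{2\pi}$, where bending by $+\pi$ and $-\pi$ coincide and the two factors of $B_M$ no longer break the symmetry between $u_i$ and $v_i$. This is precisely why $X_M$ was defined as a $\Z_2^p$-quotient in \S\ref{sRepresentationsFramedAlongMultiloop}: the ambiguity is absorbed in the definition of the domain, so the recovered unordered pair still gives a well-defined point of $X_M$. A secondary technicality is keeping track of the different normalizations of $(\eta_1,\eta_2)$ versus the representatives $(\rho_s,(u_i^s,v_i^s))$ under the diagonal $\PSL_2\C$-action; this is handled by systematically fixing gauges component by component and appealing to non-reductivity to ensure uniqueness.
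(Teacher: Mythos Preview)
Your approach is essentially the same as the paper's, just organized differently. The paper's proof packages the recovery of the bending data into a single geometric statement (Lemma~\ref{UniqueSupportSpace}): for each component $P$ of $\tilde S\setminus\tilde M$, the support hyperbolic $3$-space $H_P\subset\H^3\times\H^3$ is the \emph{unique} totally geodesic $\H^3$ containing the $\xi$-axes of the boundary loops of $P$; non-parabolicity ($\eta\notin X_M^p$) guarantees those axes exist, and irreducibility of $\eta|G_P$ ($\eta\notin X_M^r$) forces them to span a unique $\H^3$. Once the support system $\{H_P\}$ is pinned down by the holonomy alone, the paper simply says ``unbend in the same manner.'' Your argument unwinds that last sentence into an explicit inductive propagation from a base component $F_0$, reading off the bending rotations edge by edge. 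The two are equivalent: your ``unique conjugating element'' (trivial centralizer of $\rho|_{\pi_1(F)}$) is exactly the algebraic shadow of the paper's ``unique $H_P$,'' and your walk through the adjacency graph is the unbending the paper leaves implicit.

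One small point worth tightening: when you say ``the sign of the bending angle in the two factors distinguishes $u_i$ from $v_i$,'' you are implicitly using irreducibility of the restriction on the \emph{adjacent} component $F$, not just on $F_0$. Concretely, the element taking $\eta_2|_{G_F}$ to $\eta_1|_{G_F}$ is $e_{2w}$, and it is only unique---hence only determines an oriented axis---because the centralizer of $\rho|_{G_F}$ is trivial. You allude to this under ``secondary technicality,'' but it is the same hypothesis the paper invokes in Lemma~\ref{UniqueSupportSpace} for every component, so it deserves to be stated at the inductive step rather than only at the base.
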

We first show the uniqueness of the support systems of the complex bending.
\begin{lemma}\Label{UniqueSupportSpace}
Let $\eta \in X_M \minus (X_M^w \cup X_M^p)$.
Fix a representative $\xi\col \pi_1(S) \to \PSL_2\C \times \PSL_2\C$ of $B_M (\eta)$. 
Let $P$ be a component of $\ti{S} \minus \ti{M}$. 
Then, the support space  $H_P$ of $\xi$ is the unique totally geodesic copy of $\H^3$ in $\H^3 \times \H^3$ which contains the bending axes of the boundary components of $P$. 
\end{lemma}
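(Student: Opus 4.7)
The plan is to verify two things in succession: first, that the support space $H_P$ does contain every boundary axis of $P$ (this is essentially built into the construction), and second, that no other totally geodesic copy of $\H^3$ in $\H^3 \times \H^3$ can share this property under the standing hypotheses.

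For the first part, I would note that by definition the bent surface $B_M(\sigma)$ restricted to $P$ lies inside $H_P$, and that each boundary component of $P$ is mapped onto the corresponding bending axis. Since bending pivots around that axis, the axis is common to $H_P$ and the support space of the adjacent piece, and therefore lies in $H_P$.

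For the uniqueness part, I would argue by contradiction: assume $H'$ is a second totally geodesic $\H^3$ containing every boundary axis of $P$. Since $H' \neq H_P$, the intersection $H' \cap H_P$ is a proper totally geodesic subspace of $H_P \cong \H^3$, and so has real dimension at most two. The worst case is that $H' \cap H_P$ is a hyperbolic plane $\pi \subset H_P$; the lower-dimensional cases are even more restrictive and can be disposed of by the same argument. Under the identification of $H_P$ with $\H^3$ in which $\xi\vert \pi_1(F)$ acts as $\rho\vert \pi_1(F)$ (where $F$ is the quotient of $P$ by its stabilizer in $\pi_1(S)$), the collection of all boundary axes of $P$ corresponds exactly to the $\rho(\pi_1(F))$-orbit of the axes of the boundary loops $\alpha_i$ of $F$. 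So the assumption places this entire orbit inside the single hyperbolic plane $\pi$.

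By the hypothesis $\eta \notin X_M^p$, each $\rho(\alpha_i)$ is hyperbolic or elliptic with two distinct fixed points on $\CP^1 = \partial H_P$, so the endpoints of the orbit axes lie on the boundary circle $C = \partial \pi$. A Möbius transformation that maps three distinct points of $C$ to points of $C$ must preserve $C$; as soon as the $\rho(\pi_1(F))$-orbit of endpoints on $C$ contains three distinct points, every $\rho(\gamma)$ preserves $C$, which is to say $\rho(\pi_1(F))$ stabilizes the hyperbolic plane $\pi$. This is precisely the reductivity of $\rho\vert \pi_1(F)$ in the sense used in the paper, contradicting $\eta \notin X_M^r$.

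The main obstacle I anticipate is controlling the size of the endpoint orbit on $C$: I must rule out the degenerate possibility that the $\rho(\pi_1(F))$-orbit of a boundary axis collapses onto finitely many geodesics sharing only two endpoints. This collapse forces $\rho\vert \pi_1(F)$ to be virtually abelian with a preserved geodesic in $H_P$, which is again excluded by $\eta \notin X_M^r$. Once this non-elementarity is in hand, the three-points-determine-a-circle step closes the argument cleanly.
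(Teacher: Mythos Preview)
Your overall strategy matches the paper's: confirm that $H_P$ contains the boundary axes by construction, then argue that any competing copy $H'$ would trap all the axes in the proper subspace $H'\cap H_P$ of $H_P$ and derive a contradiction from the irreducibility hypothesis. The paper's own proof is a two--line assertion of exactly this, so your added detail is welcome. There is, however, a real gap in how you handle the plane case.

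You correctly deduce that if $H'\cap H_P$ were a hyperbolic plane $\pi$ then $\rho(\pi_1 F)$ would preserve the boundary circle $C=\partial\pi$. But this is \emph{not} the same as $\rho|\pi_1(F)$ being reducible (or ``reductive'' in the paper's wording): it only says the representation is conjugate into the isometry group of $\H^2$, and irreducible Fuchsian representations give a large supply of counterexamples. So the contradiction with $\eta\notin X_M^r$ fails to close in your ``worst case''.

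The way out is that the support spaces in play are $(\PSL_2\C\times\PSL_2\C)$--translates of the diagonal, hence graphs $\{(x,gx):x\in\H^3\}$ of \emph{orientation--preserving} isometries $g\in\PSL_2\C$. Two distinct such graphs meet in the fixed set of a nontrivial element of $\PSL_2\C$ on $\H^3$, which is at most a geodesic---never a plane. With that in hand your final paragraph finishes the argument: all boundary axes collapse to a single geodesic, $\rho(G_P)$ preserves it, and this is what $\eta\notin X_M^r$ is meant to exclude. The paper's proof is implicitly relying on exactly this graph description when it invokes irreducibility in one line.
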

\begin{proof}

As $\eta \notin X_M^P$, the bending axes of the boundary components of $P$ are uniquely determined by $\xi$. 
Let $G_P$ be the subgroup of $\pi_1(S)$ preserving $P$.  
Since $\eta \not\in X_M^\omega$, the restriction $\eta | G_P$ is  {\it strongly irreducible} (i.e. not weakly reducible).
Therefore one can prove that there is a unique totally geodesic copy of $\H^3$ in $\H^3 \times \H^3$, containing those bending axes, as follows: 

Set $B_M (\eta) = (\eta_1, \eta_2)\col \pi_1(S) \to \PSL_2\C \times \PSL_2\C$, where $\eta_1, \eta_2\col \pi_1(S) \to \PSL_2\C$.
Then, since $\eta_i | G_P$ is strongly irreducible, the $\PSL_2\C$-action on $\eta_i | G_P$ by conjugation has the trivial stabilizer for each $i = 1, 2$. 
 Suppose that there are two totally geodesic copies $\{ (x, \gam_1 x) \mid x \in \H^3 \}$ and   $\{ (x, \gam_2 x) \mid x \in \H^3\}$ of $\H^3$ in $\H^3 \times \H^3$ preserved by $\eta_i (G_P)$, where $\gam_1, \gam_2 \in \PSL_2 \C$.
 By the definition of the complexified bending map $B_M$, we have $\gam_1 \eta_1 \gam_1^{-1} = \eta_2$ and  $\gam_2 \eta_1 \gam_2^{-1} = \eta_2$ when restricted to $G_P$.
 Combining those equations, we have $ \gam_2^{-1}  \gam_1 \eta_1 \gam_1^{-1} \gam_2  = \eta_1$ on $G_P$. 
 Hence  $\gam_1 = \gam_2$, and the two copies of $\H^3$ coincide. 
\end{proof}
\Cref{UniqueSupportSpace} immediately implies the following. 
\begin{corollary}\Label{HolonomyDeterminesSupportSystem}
Suppose that $\eta_1, \eta_2 \in X_M \minus (X_M^p \cup X_M^w)$ satisfy $B_M (\eta_1) = B_M (\eta_2) \in \rchi \times \rchi$. 
Let $\xi\col \pi_1(S) \to \PSL_2 \C \times \PSL_2 \C$ be a representative of $B_M (\eta_1) = B_M (\eta_2)$. 
Then, the $\xi$-equivariant bending support system of $B_M$ at $\eta_1$ equivariantly coincides with that at $\eta_2$.
\end{corollary}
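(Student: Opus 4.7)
The plan is to read off the corollary directly from Lemma~\ref{UniqueSupportSpace} by observing that the data determining the support space $H_P$ depends only on the representative $\xi$, not on the particular framed representation $\eta_i$ lifting it.

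First I would fix a component $P$ of $\ti{S}\minus\ti{M}$ together with the chosen representative $\xi\col\pi_1(S)\to\PSL_2\C\times\PSL_2\C$ of the common image $B_M(\eta_1)=B_M(\eta_2)$. For each boundary component of $P$, the corresponding loop $m$ of $M$ is sent by $\xi$ to a pair of elements whose two $\PSL_2\C$-factors both satisfy $\tr^2\neq 4$, because we are working outside $X_M^p$. In each factor the image therefore has a well-defined axis, so the product gives a unique ``bending axis'' in $\H^3\times\H^3$ which is manufactured from $\xi$ alone, with no reference to the framing data of $\eta_1$ or $\eta_2$.

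Next I would invoke Lemma~\ref{UniqueSupportSpace} separately at $\eta_1$ and at $\eta_2$. Since both lie outside $X_M^r$, the restriction of $\xi$ to the stabilizer $G_P$ of $P$ is non-reductive; the lemma then asserts that the support space attached to $\eta_i$ at $P$ is the \emph{unique} totally geodesic copy of $\H^3$ in $\H^3\times\H^3$ containing the axes computed in the previous paragraph. Because both the list of axes and the uniqueness statement are identical for $\eta_1$ and $\eta_2$, the support spaces at $P$ must coincide.

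Finally I would run this argument over every component $P$ of $\ti{S}\minus\ti{M}$ to conclude that the two support systems agree set-theoretically. Equivariance is automatic: each system is $\xi$-equivariant by construction, so the equality on one $\pi_1(S)$-orbit representative propagates to all of $\ti{S}\minus\ti{M}$. The only mildly delicate point is ensuring that the axes built intrinsically from $\xi$ in the second paragraph really are the bending axes coming from the framings $(u_i,v_i)$ of $\eta_1$ and $\eta_2$; this matching is exactly what the removal of $X_M^p$ guarantees via Lemma~\ref{UniqueSupportSpace}, so I do not anticipate an additional obstacle beyond this bookkeeping.
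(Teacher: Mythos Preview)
Your proposal is correct and follows exactly the paper's intended reasoning: the paper simply states that Lemma~\ref{UniqueSupportSpace} immediately implies the corollary, and you have spelled out the details of why the support space $H_P$ is determined by $\xi$ alone. Your only addition is the explicit bookkeeping (axes depend only on $\xi$ once $X_M^p$ is removed, uniqueness once $X_M^r$ is removed, then run over all $P$), which is precisely the content of ``immediately.''
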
 

\proof[Proof of Theorem \ref{InjectivityAlomstEverywhere}]

Suppose that $\eta_1, \eta_2 \in X_M \minus (X_M^p \cup X_M^w)$ map to the same representation $\pi_1(S) \to \PSL_2\C \times \PSL_2\C$ in $\rchi \times \rchi$ by $B_M$. 
Then, let $\xi\col \pi_1 (S) \to \PSL_2\C \times \PSL_2\C$ be a representative of their image. 

By 
\Cref{HolonomyDeterminesSupportSystem}, the support system of the bending of $\eta_1$ equivariantly coincides with that of $\eta_2$. 
Therefore $\eta_1$ and $\eta_2$ are obtained by unbending $\xi$ exactly in the same manner, and we obtain  $\eta_1 = \eta_2$ as follows:

    Let $\{H_P\}_P$ denote the support planes of $\xi$, where $P$ varies over all components $P$ of $\ti{S} \minus \ti{M}$.
    Recall that, if $P$ and $P'$ are adjacent components of $\ti{S} \minus \ti{M}$ along a lift of a loop $m$ of $M$, then $H_P$ and $H_P'$ intersect in a geodesic by the angle equal to the weight of $m$. 
    Take an abstract union $\cup_P H_P$ of the support 3-spaces $H_P$ obtained by gluing adjacent support spaces along the bending geodesic axes. 
    Then we have an $\xi$-equivariant mapping $\sigma \col \cup_P H_P \to \H^3 \times \H^3$ by the inclusions $H_P \sub \H^3 \times \H^3$. 
    Note that, letting $G_P$ be the subgroup of $\pi_1(S)$ preserving $P$ in $\ti{S}$, clearly $\xi (G_P)$ preserves $H_P$.
    
   Set   $\eta_1 = (\rho_1, (u_{1, i}, v_{1, i}))$ and   $\eta_2 = (\rho_1, (u_{2, i}, v_{2, i}))$.
Then, unbending $\sigma$ by $-M$, we have an equivariant mapping $\sigma(-M)\col \cup_P  H_P \to \H^3$, and $\xi$ is deformed to an representation of $\pi_1(S) \to \PSL_2\C.$
 This unbent representation must coincide with $\rho_1$ and $\rho_2$ up to conjugation by $\PSL_2\C$, due to the definition of $B_M$. 
 Moreover, since the endpoints of the bending axes correspond to the framing,  we see that $\eta_1 = \eta_2$. 
\Qed{InjectivityAlomstEverywhere}

\subsection{A non-injective example}\Label{sNonInjectiveExmaple}
We shall see, in an example, the non-injectivity of a complex bending map.
Let $m$ be a separating loop on $S$ with some positive weight. 
Pick a connected subsurface $F$ of $S$ bounded by $m$. 
Fix a homomorphism $\rho\col \pi_1(S) \to \PSL_2\C$ such that $\rho \vert \pi_1 F$ is the trivial representation. 
Then, as $\rho(m)$ is in particular the identity, any pair $(u, v) \in \CP^1 \times \CP^1$ is a framing of $\rho$ along $m$. 
\begin{lemma}\Label{NoninjectiveNonproper}
Fix an arbitrary orientation of $m$ and an arbitrary weight on $m$. 
Then 
 $B_m(\rho, (u,  v)) = (\rho, \rho) \in \rchi \times \rchi$ for all  framings $(u, v)$ along $m$. 
 In particular, $B_m$ is {\it not} injective and not proper. 
\end{lemma}
\begin{proof}
Pick a loop  $\ell$ on $S$ which essentially intersects $m$ exactly in two points (see Figure \ref{fNoninjectiveExmaple}). 
We can assume, without loss of generality, that the base point of $\pi_1(S)$ is on  $m$.
Let $\gam$ be an element of $\pi_1(S)$ corresponding to $\ell$. 
Then homotope $\ell$ so that $\ell$ is a composition of a loop $\ell_1$ on $S \minus F$ and a loop $\ell_2$ on $F$. 
Since  $\rho \vert \pi_1(F)$ is trivial, we have $B_m \eta (\gam_\ell) = B_m \eta (\gam_{\ell_1})$. 
We can take a generating set of $\pi_1(S)$ consisting of loops in $S \minus F$ and loops in $F$. 
Therefore $B_m (\rho, (u, v)) = (\rho, \rho)$ in $\rchi \times \rchi$. 

In particular, as $(u, v)$ may leave every compact set in $(\CP^1)^2$ minus the diagonal, $B_m (\rho, (u, v)) = (\rho, \rho)$ remains true. 
Therefore $B_\ell$ is non-proper. 
\end{proof}

\begin{figure}
\begin{overpic}[scale=.03
] {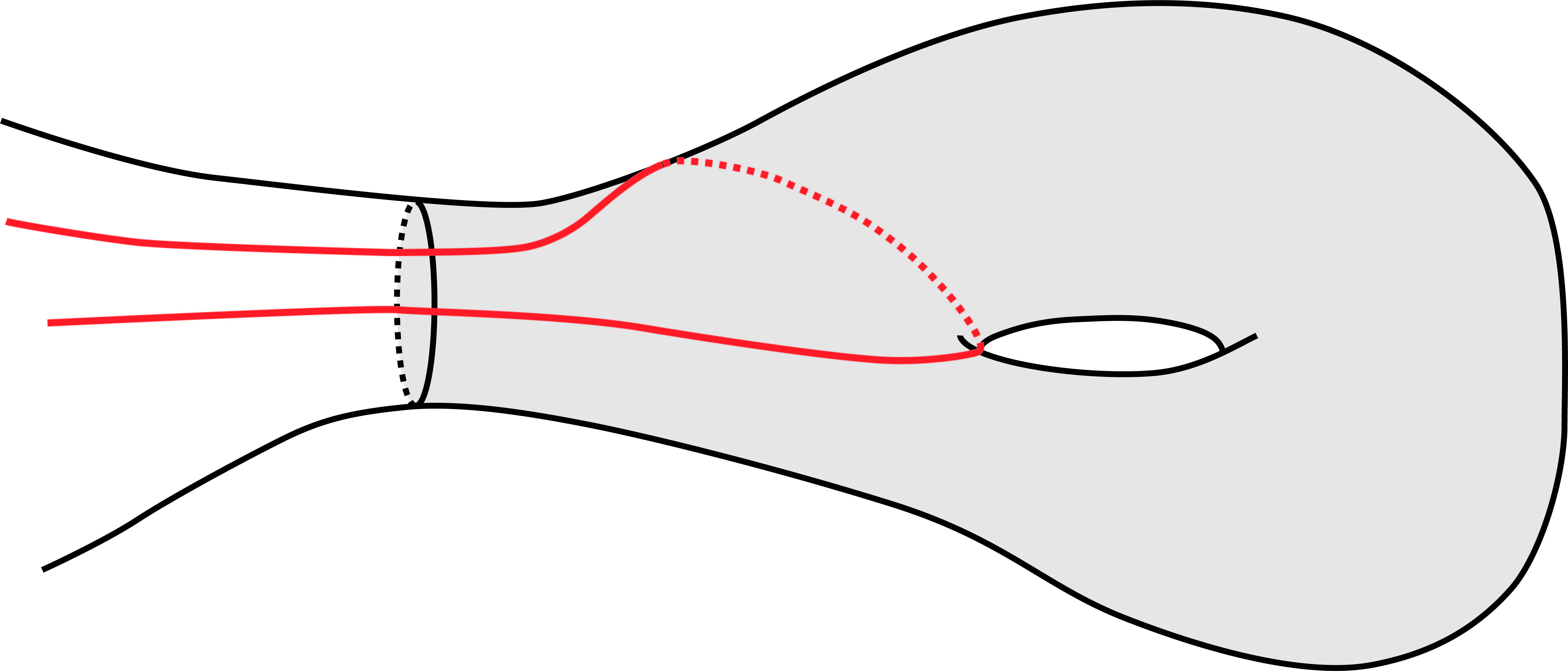} 
   \put( 70 ,28 ){\textcolor{black}{$F$}}  
   \put( 25 ,31.5 ){\textcolor{black}{$m$}}  
   \put( 45 ,22 ){\textcolor{RedOrange}{$\ell$}} 
      \end{overpic}
\caption{A loop $\ell$ essentially intersects a bending loop $m$ in two points.}\Label{fNoninjectiveExmaple}
\end{figure}
 
\section{Complex bending maps are almost proper}\Label{sAlmostProperComplexBendingMap}
 In this section, we prove the properness of the complex bending map, similarly to the injectivity in \S\ref{sInjectivity},  in the complement of certain proper subvarieties.  
 Similarly to $X_M^p$, 
 we let $\rchi_M^p$ be the subvariety of the $\PSL_2\C$-character variety $\rchi$  consisting of representations $\xi \col \pi_1(S) \to \PSL_2\C$ such that, for, at least, one loop $m$ of $M$, its holonomy  $\xi (m)$ is parabolic or the identity in $\PSL_2\C$. 
 Similarly to $X_M^w$, 
we let $\rchi_M^w$ be the subvariety of $\rchi$ such that, consisting of representations $\xi \col \pi_1(S) \to \PSL_2\C$ such that, for at least one component $F$ of $S \minus M$, $\xi | F$ is weakly reducible.      
     
\begin{theorem}\Label{PropernessAwayFromSubvariety}
The restriction of $B_M$ to $X_M \minus (X_M^p \cup X_M^w)$ is a proper mapping to $(\rchi \minus (\rchi_M^p \cup \rchi_M^w))^2$.
\end{theorem}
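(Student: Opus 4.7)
The plan is to show that any sequence $\eta_i = (\rho_i, (u_{i,j}, v_{i,j})_j) \in X_M \minus (X_M^p \cup X_M^r)$ with $B_M(\eta_i)$ converging to some $\xi$ in $(\rchi \times \rchi) \minus (\rchi_M^p \cup \rchi_M^r)$ admits a subsequence converging in $X_M \minus (X_M^p \cup X_M^r)$. After passing to a subsequence and conjugating in $\PSL_2\C \times \PSL_2\C$, lift $B_M(\eta_i)$ to representatives $\xi_i \col \pi_1(S) \to \PSL_2\C \times \PSL_2\C$ converging to a representative of $\xi$. Fix a base component $F_0$ of $S \minus M$ with a lift $P_0 \sub \ti S$. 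Since $\PSL_2\C \times \PSL_2\C$ acts transitively on totally geodesic copies of $\H^3$ in $\H^3 \times \H^3$, I may further normalize the conjugations so that the support 3-space $H_{P_0, i}$ is the standard diagonal $\{(x,x) \mid x \in \H^3\}$ for every $i$, at the price of replacing each $\eta_i$ by a $\PSL_2\C$-conjugate (which does not change its class in $X_M$). With this normalization, $\xi_i | \pi_1(F_0)$ is literally diagonal, equal to $(\rho_i | \pi_1(F_0), \rho_i | \pi_1(F_0))$, so the convergence of $\xi_i$ forces convergence of $\rho_i | \pi_1(F_0)$ in the representation space.

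Next, for each boundary loop $m_j$ of $F_0$ with representative $\alpha_j \in \pi_1(S)$, bending preserves the holonomy along $m_j$, so $\xi_i(\alpha_j) = (\rho_i(\alpha_j), \rho_i(\alpha_j))$ converges to $\xi(\alpha_j)$. The hypothesis $\xi \notin \rchi_M^p$ forces $\xi(\alpha_j)$ to be non-parabolic in each factor, so the limit $\rho(\alpha_j) \coloneqq \lim \rho_i(\alpha_j)$ is non-parabolic and non-trivial; its two distinct fixed points on $\CP^1$ are the respective limits of $u_{i,j}$ and $v_{i,j}$. In particular the axes $[u_{i,j}, v_{i,j}]$ and the bending elements of $\PSL_2\C \times \PSL_2\C$ (rotation by $w_j$ in the first factor and by $-w_j$ in the second, around the common axis) all converge.

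I then propagate convergence along the dual graph of $S \minus M$. For a component $F$ adjacent to $F_0$ across the lift of $m_j$, its support 3-space $H_{F,i}$ is the image of $H_{P_0,i}$ under the converging bending element, hence converges to a limiting totally geodesic 3-space $H_F$. The assumption $\xi \notin \rchi_M^r$ combined with \Cref{UniqueSupportSpace} applied to $\xi$ shows that $H_F$ is the unique support 3-space of $\xi | \pi_1(F)$, so it agrees with the support space coming from the limit $\xi$. Writing $\xi_i | \pi_1(F)$ as a convergently conjugated copy of the diagonal $(\rho_i | \pi_1(F), \rho_i | \pi_1(F))$, the convergence of $\xi_i | \pi_1(F)$ forces convergence of $\rho_i | \pi_1(F)$. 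Repeating this step at each new boundary loop and iterating along a spanning tree of the finite connected dual graph yields convergence of $\rho_i$ on all of $\pi_1(S)$ and of every framing, so $\eta_i$ converges in $X_M$.

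Finally, I verify the limit $\eta = (\rho, (u_j, v_j))$ lies in $X_M \minus (X_M^p \cup X_M^r)$: for every loop $m_j$ of $M$, $\tr^2 \rho(\alpha_j) \neq 4$ because $\xi(\alpha_j) = (\rho(\alpha_j), \rho(\alpha_j))$ is non-parabolic in each factor (using $\xi \notin \rchi_M^p$), hence $\eta \notin X_M^p$; and for every component $F$, the fact that $\xi | \pi_1(F)$ is conjugate to the diagonal $(\rho | \pi_1(F), \rho | \pi_1(F))$ and is not reducible in each factor forces $\rho | \pi_1(F)$ to be irreducible, so $\eta \notin X_M^r$. The main obstacle is the coherent propagation of convergence through the dual graph: a single global normalization of $\xi_i$ must simultaneously produce convergence of $\rho_i | \pi_1(F)$ for every component $F$, and this is what the base-point normalization at $F_0$ combined with the bookkeeping of the bending cocycle along a spanning tree accomplishes. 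The exclusion of $X_M^p$, $X_M^r$, $\rchi_M^p$, and $\rchi_M^r$ is precisely what ensures that all intermediate data --- bending axes, support 3-spaces, and the unbending conjugations --- remain well-defined in the limit.
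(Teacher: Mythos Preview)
Your argument is correct and follows the same strategy as the paper: establish convergence of the support hyperbolic 3-spaces and bending axes using the irreducibility and non-parabolic hypotheses on the limit $\xi$, then unbend along the limiting axis system to recover a convergent $\eta_i$. The only difference is organizational---the paper deduces convergence of every $H_{i,P}$ in parallel from the strong irreducibility of $\xi_\infty$ on each component (this is the content of \Cref{UniqueSupportSpace}), whereas you anchor at a base component $P_0$ and propagate along the dual graph via the convergent bending cocycle.

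One step deserves slightly more care: your claim that you may ``further normalize'' so that $H_{P_0,i}$ is the standard diagonal while $\xi_i$ still converges already presupposes that $H_{P_0,i}$ converges in the space of totally geodesic $\H^3$'s (otherwise the normalizing conjugations are unbounded). This convergence is exactly what the irreducibility of $\xi | \pi_1(F_0)$ in each factor guarantees, via the same mechanism you invoke through \Cref{UniqueSupportSpace} at the propagation step; you should cite it at the anchoring step as well. With that addition your proof is complete and equivalent to the paper's.
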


\proof
Let $\eta_i \in X_M \minus (X_M^P \cup X_M^w)$ be a sequence such that $B_M(\eta_i)$ converges to a representation in  $(\rchi \minus (\rchi_M^p \cap \rchi_M^w))^2$ as $i \to \infty$.
It suffices to show that $\eta_i$ also converges in $X_M \minus (X_M^P \cup X_M^w)$.

Pick a representative $\xi_i\col \pi_1(S) \to \PSL_2\C \times \PSL_2\C$ of $B_N(\eta_i)$ so that $\xi_i$ converges to $\xi\col \pi_1(S) \to \PSL_2\C \times \PSL_2\C$, so that its equivalence class $\xi$ is in $(\rchi \minus (\rchi_M^p \cap \rchi_M^w))^2$. 
Let $\{H_{i, P}\}$ be the $\xi_i$-equivariant bending support system of the complex bending of $\eta_i$ along $M$, where $P$ varies over all connected components of $\ti{S} \minus \ti{M}$.
By the hypothesis, the restriction of $\xi$ to each component of $S \minus M$ is strongly irreducible. 
Therefore, by \Cref{UniqueSupportSpace}, the $\xi_i$-equivariant  support system $\{H_{i, P}\}$ converges to a unique support system $\{H_P\}$ of $\xi$ as $i \to \infty$. 

We also show that the bending axes converge. 
\begin{claim}
The $\xi_i$-equivariant axis system for bending $\eta_i$ along $M$ in $\H^3 \times \H^3$ converges to a $\xi$-equivariant axis system as $i \to \infty$. 
\end{claim}
\begin{proof}
Let $m$ be a loop of $M$, and let $\ti{m}$ be a component of $\ti{M}$ which descends to $m$.
Let $\alpha \in \pi_1(S)$ denote the element preserving $\ti{m}$ such that the free homotopy class of $\alpha$ is  $m$.
Let $P, Q$ denote the adjacent components of $\ti{S} \minus \ti{M}$ separated by $\ti{m}$. 
Then $H_{i, P} \cap H_{i, Q}$ is the complex bending axis $g_{i, \ti{m}}$ for $\ti{m}$ in $\H^3 \times \H^3$, and also the axis of $\xi_i (\alpha)$.
The angle of the intersection of $H_{i, P}$ and $H_{i, Q}$ along the axis is equal to the weight of $m$.
As $\xi_i(m)$ converges to a non-parabolic element $\xi(m)$, the axis $H_{i, P} \cap H_{i, Q}$ converges to the axis of $\xi(\alpha)$ as $i \to \infty$.
\end{proof}

For each $i = 1,2, \dots$,
 let $\{g_{i, \ti{m}}\}$ denote the $\xi_i$-equivariant bending axis system in $\H^3 \times \H^3$ of $B_M$ at $\eta_i$.
Note that $\eta_i$ is obtained by unbending $\xi_i$ along the axes $g_{i, \ti{m}}$ by the angles given by the weights $M$. 
By the convergence, similarly unbending the limit $\xi$ in $(\rchi \minus (\rchi_M^p \cup \rchi_M^w))^2$ along the limit bending axis system by the angles given by $M$, we obtain the limit of $\eta_i$ as $i \to \infty$. 
As $\xi$ is in $(\rchi \minus (\rchi_M^p \cup \rchi_M^w))^2$, thus $\lim_{i \to \infty} \eta_i$ is contained in $X_M \minus (X_M^p \cup X_M^w)$.
\Qed{PropernessAwayFromSubvariety}

 \section{Analyticity of complex bending maps}\Label{sAnalyticity}
  
  \begin{theorem}\Label{BendingIsAnalytic}
  For every weighted oriented multiloop $M$ on $S$, the bending map
$B_M \col X_M \to \rchi \times \rchi$ is complex analytic. 
\end{theorem}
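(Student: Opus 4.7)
The plan is to reduce the analyticity of $B_M$ to a comparison of coordinate rings. By Proposition \ref{CoordinateRing} and its analogue for the multiloop case, the coordinate ring of $X_M$, viewed via the biholomorphism $X_M \cong \hat\RR_M \sslash \PSL_2\C$ from Section \ref{sRepresentationsFramedAlongMultiloop}, is generated by functions of the form $\tr^2 \hat\rho(\gam)$ for $\gam \in \pi_1(S) \ast \F^n$. Likewise the coordinate ring of $\rchi \times \rchi$ is generated by the trace-squared functions $\tr^2 \rho_j(\delta)$ for $\delta \in \pi_1(S)$ and $j = 1, 2$. It therefore suffices to show that for each $\delta \in \pi_1(S)$ there exist elements $W_1(\delta), W_2(\delta) \in \pi_1(S) \ast \F^n$ with
\[
B_M(\eta)_j(\delta) = \hat\rho(W_j(\delta)), \qquad j = 1, 2,
\]
after fixing a suitable normalization of the bent representation; passing to $\tr^2$ will then give $B_M^*(\tr^2\rho_j(\delta)) = \tr^2 \hat\rho(W_j(\delta))$, which is already a generator of the coordinate ring of $X_M$.

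To construct $W_j(\delta)$, I would fix a base component $C_0$ of $\ti S \minus \ti M$ and a base point $x_0 \in C_0$, and for each $\delta \in \pi_1(S)$ choose a path $\ti\delta$ in $\ti S$ from $x_0$ to $\delta x_0$ that is transverse to $\ti M$. Reading off the finite sequence of lifts of loops in $M$ crossed by $\ti\delta$, and iterating the single-loop bending formula of Section \ref{sBendingDeformation} — whereby crossing a lift of $m_i$ conjugates the ambient representation by $\gam_{u_i, v_i, \pm w_i}$, with sign dictated by the crossing direction — I would write down $W_1(\delta)$ by inserting the appropriate generator $g_i^{\pm 1}$ of $\F^n$ at each crossing, conjugated by the element of $\pi_1(S)$ recording the return to $C_0$. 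The word $W_2(\delta)$ is obtained identically except that each $g_i$ is replaced by $g_i^{-1}$, reflecting the opposite-sign bending in the second factor.

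Granting this word formula, the pullback under $B_M$ of every generator of the coordinate ring of $\rchi \times \rchi$ lies already in the coordinate ring of $X_M$ and is in particular holomorphic. Since $\rchi \times \rchi$ is embedded as an analytic subvariety of $\C^N$ via these trace-squared coordinates, and analyticity of a map into such a subvariety is tested by pulling back coordinates, it follows that $B_M \col X_M \to \rchi \times \rchi$ is complex analytic.

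The main obstacle is the combinatorial step, namely the rigorous construction of $W_j(\delta)$. The bent representation in each factor is only defined up to global conjugation in $\PSL_2\C$, and the word $W_j(\delta)$ depends on the auxiliary choices of $C_0$, $x_0$, and $\ti\delta$; however, since $\tr^2$ is conjugation invariant, these ambiguities disappear at the level of the pulled-back coordinate functions, and the verification reduces to iterating the local formula at each single crossing.
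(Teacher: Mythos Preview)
Your overall strategy via explicit bending words is sound, but the specific formula you write down fails. In the paper's coordinate-ring construction (\S\ref{sCoordinateRing} and \S\ref{sRepresentationsFramedAlongMultiloop}) the generator $g_i$ is sent by $\hat\rho$ to $\gamma_{u_i, v_i, w}$, which for a loop of weight $\not\equiv \pi \pmod{2\pi}$ is a \emph{hyperbolic} element with a fixed multiplier $|w|>1$, not the elliptic rotation of angle $w_i$ about the axis $[u_i,v_i]$ that actually appears when one bends. Hence $B_M(\eta)_j(\delta) \neq \hat\rho(W_j(\delta))$ in general, and $B_M^*(\tr^2\rho_j(\delta))$ is not one of the listed generators $\tr^2\hat\rho(\gamma)$ of the coordinate ring of $X_M$.

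The repair is easy, and once made your argument is genuinely different from (and arguably cleaner than) the paper's. Replace $\hat\rho$ by the extension $\hat\rho'\colon \pi_1(S)\ast\F^n \to \PSL_2\C$ that sends $g_i$ to the actual bending element $e_i$ (elliptic of angle $w_i$, axis $[u_i,v_i]$). Since $e_i$ depends algebraically on $(u_i, v_i) \in \CP^1 \times \CP^1 \setminus D$, your word identity now holds on all of $R_M$, and the functions $(\rho,(u_i,v_i)) \mapsto \tr^2\hat\rho'(W_j(\delta))$ are holomorphic, $\PSL_2\C$-invariant, and (for the loops of weight $\pi$) invariant under the $\Z_2^p$-action; they therefore descend to holomorphic functions on the Stein quotient $X_M$ by its defining universal property. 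This yields analyticity of $B_M$ globally in one step. By contrast, the paper first restricts to the open set $X_M\setminus(X_M^p\cup X_M^r)$, where stabilizers are discrete, takes local holomorphic sections of $R_M \to X_M$ there to lift $B_M$ holomorphically to the representation variety, and only then extends across the exceptional locus by invoking the removable singularity theorem (\Cref{RemovableSingularity}); your global word argument makes that second step unnecessary.
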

\proof
Recall that $X_M^p$ is the subvariety of the complex-analytic variety $X_M$ consisting of representations such that at least one loop of $M$ is parabolic, and also that $X_M^w$ is the subset of $X_M$ consisting of representations $\eta$ such that the restriction of $\eta$ to a component of $S \minus M$ is weakly reducible. 
 We have shown that the restriction of $B_M$ to $X_M \minus X_M^p \cup X_M^w$ is injective (\Cref{InjectivityAlomstEverywhere}).
 We first prove the assertion of \Cref{BendingIsAnalytic} for almost everywhere.
  \begin{lemma}\Label{AnalyticAlmostEverywhere}
  The restriction of $B_M$ to  $X_M \minus (X_M^p \cup X_M^w)$ is complex analytic.  
\end{lemma}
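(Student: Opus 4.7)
The plan is to prove analyticity by constructing an explicit $\PSL_2\C$-equivariant holomorphic lift $\tilde B_M \colon R_M \to \RR \times \RR$ of $B_M$, and then descending to $X_M$ by the universal property of the Stein categorical quotient. This reduces the statement to the observation that the bending elements $\gam_{u, v, w}$ depend holomorphically on $(u, v)$.

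To construct $\tilde B_M$, fix a finite generating set $\gam_1, \dots, \gam_N$ of $\pi_1(S)$ represented by loops in general position with $M$, and fix a basepoint off $M$. For each $\gam_k$, the combinatorial data of its sequence of transverse crossings with the loops of $M$ determines words $W_k^+, W_k^-$ whose letters are drawn from (a) values $\rho(\delta)$ for a finite auxiliary list of elements $\delta \in \pi_1(S)$ supported in components of $S \minus M$, and (b) the bending elements $\gam_{u_{i_j}, v_{i_j}, w_{i_j}}$ (respectively $\gam_{u_{i_j}, v_{i_j}, -w_{i_j}}$) from \S\ref{sCoordinateRing}, such that setting
\[
\tilde B_M(\rho, (u_i, v_i))(\gam_k) = \bigl(W_k^+(\rho, u_i, v_i),\ W_k^-(\rho, u_i, v_i)\bigr)
\]
reproduces the holonomy of the bent surface $B_M(\sigma)$ constructed in \S\ref{sBendingFramedRepresentataions}. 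The opposite signs of $w_{i_j}$ in $W_k^-$ versus $W_k^+$ record the opposite bending normal directions prescribed there. Each $\gam_{u, v, w}$ is holomorphic in $(u, v) \in \CP^1 \times \CP^1 \minus D$ by the explicit recipe of \S\ref{sCoordinateRing}, and each $\rho(\delta)$ is a regular function of $\rho \in \RR$; hence $\tilde B_M$ is holomorphic on the analytic subset $R_M$.

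Descent to $X_M$ is then formal. Conjugating $(\rho, (u_i, v_i))$ by $g \in \PSL_2\C$ conjugates every letter of every $W_k^{\pm}$ by $g$, so $\tilde B_M$ intertwines the diagonal $\PSL_2\C$-action on $R_M$ with the diagonal $\PSL_2\C$-action on $\RR \times \RR$. For a loop $m_i$ of weight $\pi$ modulo $2\pi$, the $\Z_2$-factor swapping the ordering of $(u_i, v_i)$ sends $\gam_{u_i, v_i, \pm \pi}$ to its inverse, which lies in the same $\PSL_2\C$-conjugacy class in each factor. Consequently, post-composing $\tilde B_M$ with the holomorphic quotient map $\RR \times \RR \to \rchi \times \rchi$ gives a $\Z_2^p \times \PSL_2\C$-invariant holomorphic map on $R_M$, which by the universal property of Stein categorical quotients (see \cite{Snow82}) factors through a holomorphic map $X_M \to \rchi \times \rchi$. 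By construction this induced map coincides with $B_M$ on the locus $X_M \minus (X_M^p \cup X_M^r)$ where the latter is defined.

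The main obstacle is the combinatorial bookkeeping behind the words $W_k^\pm$: one must verify that they are independent of the chosen general-position representative of $\gam_k$ and reproduce the geometric holonomy of $B_M(\sigma)$ from \S\ref{sBendingFramedRepresentataions}. This reduces via the standard bigon move to the case of a single transverse crossing of a single loop of $M$, where the formula follows directly from the conjugation-by-$\gam_{u, v, w}$ prescription recalled in \S\ref{sBendingDeformation} and \S\ref{sBendingFramedRepresentataions}. Once this bookkeeping is in place, the holomorphic dependence of $\gam_{u, v, w}$ on its fixed-point data propagates automatically through the words $W_k^\pm$, delivering the claimed analyticity.
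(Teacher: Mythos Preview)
Your argument is correct and in fact proves more than the lemma asks: your descent via the universal property of the Stein quotient yields analyticity of $B_M$ on all of $X_M$, not only on $X_M \minus (X_M^p \cup X_M^r)$. The paper takes a more local route: on the good locus where stabilizers are discrete, it picks a small $\PSL_2\C$-invariant neighborhood $U \cong \PSL_2\C \times D$ in $R_M$, chooses a holomorphic section $s\colon D \to U$, and checks that bending (with the restriction to a fixed component $Q$ of $\tilde S \minus \tilde M$ held constant) gives a holomorphic map $s(D) \to \RR \times \RR$ landing in strongly irreducible representations, whence the composite with the algebraic quotient $\RR \times \RR \to \rchi \times \rchi$ is holomorphic. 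The paper then invokes the removable singularity theorem separately to extend across $X_M^p \cup X_M^r$. Your global construction of a $\PSL_2\C$-equivariant holomorphic lift $\tilde B_M\colon R_M \to \RR \times \RR$ together with the categorical-quotient universal property achieves both steps at once, at the cost of the combinatorial bookkeeping for the words $W_k^\pm$ that the paper's local-section argument sidesteps. One small simplification in your $\Z_2^p$-invariance check: an elliptic rotation of angle $\pi$ is an involution, so $\gam_{v_i,u_i,\pi} = \gam_{u_i,v_i,\pi}$ literally, not merely up to conjugacy; hence $\tilde B_M$ itself already descends to $R_M/\Z_2^p$ before passing to $\rchi \times \rchi$.
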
 
\begin{proof}
Recall that $R_M$ is the space of representations framed along $M$, and that $R_M \sslash \PSL_2\C = X_M$. 
Let $R_M^p$ be the subset of $R_M$ consisting of framed representations, such that at least one loop of $M$ is parabolic (or the identity). 
Let $\eta= (\rho, (u_i, v_i)_{i = 1}^n)$ be an arbitrary framed representation in $R_M \minus (R_M^p \cup R_M^r)$, where $n$ is the number of the loops of $M$.
As the closed subvariety $R_M^p \cup R_M^r$ is $\PSL_2\C$-invariant, 
 we can take a $\PSL_2\C$-invariant open neighborhood $U$ of $\eta$ in $R_M \minus (R_M^p \cup R_M^r)$.
Then, for every framed representation $\zeta \in U$,  the stabilizer of $\zeta$ in $\PSL_2\C$ is a discrete group, since $\zeta$ is {\it not} in $R_M^r$,
Thus, if we take $U$ appropriately, $U$ is holomorphically a product of $\PSL_2\C$ and an open disk $D$.
Let $W$ be the image of $U$ in $X_M$. 
Then, we can biholomorphically identify $W$ in $X_M$ with $D$ in $U$ and take a holomorphic section $s\col W \to U$.

Pick any component of $Q$ of $\ti{S} \minus \ti{M}$, where $\ti{M}$ is the inverse image of $M$ in $\ti{S}$.
Let $G_Q$ be the stabilizer of $Q$ in $\pi_1(S)$. 
By $\C$-bending along $M$ (normalizing so that the restriction to $G_Q$ is unchanged), we obtain a holomorphic mapping $s(W) \to  (\RR \minus \RR_M^p \cup \RR_M^r)^2$ which is a lift of the restriction of $B_M$ to $W$.
Then, for every $\zeta \in s(W)$, its image by this mapping is a pair of strongly irreducible representations in $\RR$. 
Since $W$ is isomorphic to $s(W)$ and the quotient map from $\RR \times \RR$ to $\rchi \times \rchi$ is algebraic, the analyticity of $s(W) \to  (\RR \minus \RR_M^p \cup \RR_M^r)^2$ implies the analyticity of $B_M$ at the equivalent class of $\eta$ in $X_M$. 
\end{proof}

  By \Cref{AnalyticAlmostEverywhere},  $X_M \minus (X_M^p \cup X_M^w) \to (\rchi \minus \rchi^p_M \cup \rchi_M^w) \times (\rchi \minus \rchi^p_M \cup \rchi_M^w)$ is an injective analytic mapping. 
 Since $X_M^p \cup X_M^w$ is an analytic subvariety of $X_M$,  by the Removable Singularity Theorem (\Cref{RemovableSingularity}), the mapping $B_M \col X_M \to \rchi \times \rchi$ is analytic. 
\Qed{BendingIsAnalytic}

\section{The real-bending map sits in the complex-bending map}\Label{sCoplexification}
In this section, we observe that the complex-analytic bending map $B_M \col X_M \to \rchi \times \rchi$ is a natural extension of the real-analytic bending map $b_M\col \TT \to \rchi$.
Recall, from \S \ref{sComplexBendingVarieties}, that $\Delta^\ast$ is the twisted diagonal $\{ (\rho, \rho^\ast) \mid  \rho \in \rchi )\}$ and  $\psi \col \rchi \to  \Delta^\ast \sub \rchi \times \rchi$ is the embedding given by $\rho \mapsto (\rho, \rho^\ast ).$ \note{}

The forgetful map $X_M \to \rchi$ restricts to an analytic covering map $X_M \minus X_M^p \to \rchi \minus \rchi_M^p$ of degree $2^n$, where $n$ is the number of loops of $M$.
As the base surface $S$ is oriented, we let $\TT$ be the Teichmüller space of $S$ identified with a unique component of the real slice of  $\rchi$.
Since each loop of $M$ is oriented, there is a unique lift of $\TT$ to $X_M$:
Namely,  given a discrete faithful representation $\rho\col \pi_1(S) \to \PSL(2, \R)$ in $\TT$, for each (oriented) loop $m$  of $M$,  assign the reprelling fixed point of $\rho(m) \in \PSL_2\C$ in $\CP^1$ and the attacting fixed point of $\rho(m)$ in this order  as its framing.  
Let $\iota_M\col \TT \to X_M$ denote this real-analytic embedding.

\begin{proposition}\Label{ComplexifyingTwistedDiagonal}
Let $M$ be a weighted multiloop on $S$. 
Then, the restriction of $B_M$ to $\TT$ is a real-analytic embedding into the twisted diagonal $\Delta^\ast$ of $\rchi \times \rchi$, such that   $B_M \circ \iota_M$ coincides with $\psi \circ b_M\col \TT \to \rchi \times \rchi$.
\end{proposition}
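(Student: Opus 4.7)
The plan is to verify, in order: (a) that $B_M \circ \iota_M$ takes values in $\Delta^\ast$ together with the identity $\psi \circ B_M \circ \iota_M = b_M$, and (b) that the resulting map is a real-analytic embedding.

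For (a), I would start by picking a Fuchsian representative $\rho_\tau\col \pi_1(S) \to \PSL_2\R \subset \PSL_2\C$ of an arbitrary $\tau \in \TT$. By the defining property of $\iota_M$, the framing along each oriented loop $m$ of $M$ is the ordered pair of fixed points of $\rho_\tau(\alpha_m)$ on $\partial_\infty \H^2 \subset \CP^1$, ordered compatibly with the orientation of $m$. Consequently every bending axis in $\H^3$ is the hyperbolic geodesic in $\H^2$ joining these two fixed points. Letting $E_w \in \PSL_2\C$ denote the elliptic rotation of angle $w$ about such an axis, the central observation is
\[
E_w^\ast = E_{-w},
\]
since the anti-holomorphic involution $\ast$ fixes $\PSL_2\R$ pointwise and preserves the axis while reversing the sense of rotation about any geodesic lying in $\H^2$.

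By construction of $B_M$ in \S\ref{sBendingFramedRepresentataions}, the first coordinate of $B_M(\iota_M(\tau))$ is obtained from the diagonal $(\rho_\tau, \rho_\tau)$ by equivariant conjugation by products of the $E_{w_i}^{\pm 1}$, while the second coordinate is obtained by the parallel procedure with $E_{-w_i}^{\pm 1}$. Applying $\ast$ to the first coordinate term by term, and using $E_w^\ast = E_{-w}$ together with $\rho_\tau^\ast = \rho_\tau$, reproduces exactly the second coordinate. Hence
\[
B_M(\iota_M(\tau)) = \bigl(\,b_M(\tau),\; b_M(\tau)^\ast\,\bigr) \in \Delta^\ast,
\]
which both places the image in $\Delta^\ast$ and yields $\psi \circ B_M \circ \iota_M = b_M$.

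For (b), real-analyticity of $B_M \circ \iota_M$ is immediate: $B_M$ is complex analytic on $X_M$ by \Cref{BendingIsAnalytic}, and $\iota_M$ is real-analytic by construction. The projection $\psi\col \Delta^\ast \to \rchi$ is a real-analytic diffeomorphism with inverse $\rho \mapsto (\rho, \rho^\ast)$. Since $\psi \circ B_M \circ \iota_M = b_M$ by (a), and $b_M$ is a real-analytic embedding by \Cref{BendingTeich}, composing with $\psi^{-1}$ upgrades $B_M \circ \iota_M$ to a real-analytic embedding into $\Delta^\ast$.

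The main technical point will be to align conventions so that, for Fuchsian $\rho_\tau$, the bending axes picked out by $\iota_M$ genuinely lie in a common totally geodesic $\H^2 \subset \H^3$ and the sign of the bending in the second factor is truly the negative of that in the first. Once these choices are matched, the identity $E_w^\ast = E_{-w}$ propagates essentially automatically through the equivariant simultaneous bending for a multiloop.
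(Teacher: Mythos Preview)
Your proposal is correct and follows essentially the same approach as the paper's proof: the paper simply asserts that ``clearly $B_M$ is $b_M \times b_M^\ast$ on $\TT$'' (having already remarked in \S\ref{sBendingFramedRepresentataions} that for Fuchsian $\rho$ the two factors are complex conjugates), cites \Cref{BendingIsAnalytic} for analyticity, and declares the rest immediate. Your version spells out the mechanism behind that assertion via the identity $E_w^\ast = E_{-w}$ for rotations about geodesics in $\H^2$, and makes explicit the appeal to \Cref{BendingTeich} (through $\psi^{-1}$) for the embedding property, which the paper leaves implicit.
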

\begin{proof}
Let $b_M^\ast\col \TT \to \rchi$ denote the complex conjugate of $b_M \col \TT \to \rchi$, i.e. the Fuchsian representation $\rho\col \pi_1(S) \to \PSL_2\R$ maps to the mapping taking $\gam \in \pi_1(S)$ to $(b_M(\rho)(\gam))^\ast \in \PSL_2\C$.
When applying the complex bending $B_M$, a representation into $\PSL_2\C$ is bent in opposite directions in the first and the second factor of $\rchi \times \rchi$  (\S \ref{sBendingFramedRepresentataions}). 
Therefore, when applying $B_M$ to a Fuchsian representation,   the representation in the second factor is the complex conjugate of the representation in the first factor. 
Therefore $B_M \circ \iota_M (\rho)$ is $(b_M (\rho),  b_M^\ast(\rho) )$ for $ \rho  \in \TT$, as desired. 
The analyticity of the mapping was already proven in \Cref{BendingIsAnalytic}.
\end{proof}

\section{Properness of the complex bending map along a non-separating loop}\Label{sPropernessForNonseparatingLoop}

\begin{theorem}\Label{ProperCase}
Let $\ell$ be a non-separating oriented loop on $S$ with weight not equal to  $\pi$ modulo $2\pi$. 
Then, the complex bending map $B_\ell\col X_\ell \to \rchi \times \rchi$ is proper. 
\end{theorem}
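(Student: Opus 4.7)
The plan is to build on \Cref{PropernessAwayFromSubvariety}, which already provides properness of $B_\ell$ over $(\rchi \minus (\rchi_\ell^p \cup \rchi_\ell^r))^2$, and to close the argument on the remaining loci by leveraging the two very specific hypotheses: $\ell$ is non-separating, so that $F := S \minus \ell$ is connected, and $w \not\equiv \pi \pmod{2\pi}$ (implicitly also $w \notin 2\pi\Z$, so that the bending is not trivial). The HNN decomposition $\pi_1(S) = \pi_1(F) *_{\alpha_1 = t\alpha_2 t^{-1}} \langle t \rangle$, with $\alpha_1, \alpha_2 \in \pi_1(F)$ the boundary classes representing $\ell$ and $t$ a stable letter crossing $\ell$ once, reduces the bending formulas to $\rho^\pm|_{\pi_1(F)} = \rho|_{\pi_1(F)}$ and $\rho^\pm(t) = \rho(t) A^{\pm w}$, where $A^{\pm w}\in\PSL_2\C$ is the rotation by $\pm w$ about the oriented axis $[u,v]$. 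In any representative for which $\rho^+|_{\pi_1(F)} = \rho^-|_{\pi_1(F)}$ one therefore has the identity
\[
\rho^-(t)^{-1}\rho^+(t) = A^{2w}.
\]
Under the hypothesis on $w$, the trace squared $\tr^2 A^{2w} = 4\cos^2 w$ differs from $4$, so $A^{2w}$ is always a nontrivial elliptic with two distinct fixed points, and these recover the unordered pair $\{u,v\}$; the ordering is pinned by the sign of the rotation.

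Given a sequence $\eta_i = (\rho_i, u_i, v_i) \in X_\ell$ with $B_\ell(\eta_i) \to \xi_\infty$ in $\rchi\times\rchi$, I would proceed as follows. Choose lifts $\rho_i^\pm \in \RR$ with the common $\pi_1(F)$-normalization $\rho_i^+|_{\pi_1(F)} = \rho_i^-|_{\pi_1(F)}$, then pass to a subsequence and reconjugate by a single element of $\PSL_2\C$ so that $(\rho_i^+, \rho_i^-) \to (\rho_\infty^+, \rho_\infty^-)$ in $\RR\times\RR$. The element $A_i^{2w} = \rho_i^-(t)^{-1}\rho_i^+(t)$ then converges to $A_\infty^{2w}$, which by the trace computation above is a nontrivial elliptic with two distinct fixed points $(u_\infty, v_\infty) \in \CP^1 \times \CP^1 \minus D$. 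The square root $A_i^w$ selected by the trace $\pm 2\cos(w/2)$ and the orientation converges to $A_\infty^w$, and hence $\rho_i(t) = \rho_i^+(t)(A_i^w)^{-1}$ converges. Combined with convergence on $\pi_1(F)$ this gives $\rho_i \to \rho_\infty$ in $\RR$, so $\eta_i \to (\rho_\infty, u_\infty, v_\infty)$ descends to convergence in $X_\ell$.

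The principal obstacle is the lifting step when $\xi_\infty$ lies in $\rchi_\ell^r$, that is, when the limit has reducible restriction to $\pi_1(F)$: the common $\pi_1(F)$-normalization is no longer unique up to a single $\PSL_2\C$-element, and a sequence of such normalized representatives need not converge in $\RR\times\RR$ on a chosen subsequence. To overcome this I would either use the non-separating structure, combining the stable letter $t$ with $\pi_1(F)$-generators to pin down the $\PSL_2\C$-gauge even when $\rho_\infty|_{\pi_1(F)}$ degenerates; or, exploiting the analyticity of $B_\ell$ (\Cref{BendingIsAnalytic}) and the properness already established on the complement by \Cref{PropernessAwayFromSubvariety}, bound the finitely many trace-squared generators of the coordinate ring of $X_\ell$ in terms of pullbacks from $\rchi\times\rchi$ via Fricke-type identities, and extend the bound over the exceptional locus by continuity.
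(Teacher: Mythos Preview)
Your route is genuinely different from the paper's, and the obstacle you flag at the end is exactly where your main argument is incomplete. The paper never lifts to $\RR\times\RR$: it works entirely with trace functions. The key device is a properness lemma for the rank-two free group: with $e\in\PSL_2\C$ elliptic of a fixed angle $\theta\in(0,\pi)$, the map $\E_\theta\sslash\PSL_2\C\to\C^2$ given by $(\gam,e)\mapsto(\tr^2\gam,\tr^2\gam e)$ is proper, and this follows from the Fricke identity $\tr(\gam e)+\tr(\gam e^{-1})=\tr\gam\cdot\tr e$ in $\SL_2\C$ together with $\tr e=2\cos(\theta/2)\neq 0$ (this is precisely where $w\not\equiv\pi\pmod{2\pi}$ enters). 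The non-separating hypothesis is used to choose a generating set $\gam_1,\dots,\gam_n$ of $\pi_1(S)$ with each $\gam_k$ crossing $\ell$ exactly once, so that after normalizing the bending axis one has $\rho^\pm(\gam_k)=\rho(\gam_k)e^{\pm1}$; divergence of $\eta_i$ in $X_\ell$ then forces some $(\rho_i(\gam_k),e)$ to diverge in $\E_\theta\sslash\PSL_2\C$, and the lemma pushes this to divergence of $B_\ell(\eta_i)$ in $\rchi\times\rchi$.

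Your inversion $\rho^-(t)^{-1}\rho^+(t)=A^{2w}$ is correct and would recover the framing and $\rho$ from $(\rho^+,\rho^-)$ at the level of representations, but the lifting step is the crux: convergence in $\rchi\times\rchi$ only produces representatives convergent up to \emph{independent} conjugation in each factor, and enforcing $\rho_i^+|_{\pi_1(F)}=\rho_i^-|_{\pi_1(F)}$ demands a conjugating sequence $g_i\in\PSL_2\C$ whose convergence is exactly what is in question when $\rho_\infty|_{\pi_1(F)}$ has nontrivial centralizer. Your second fallback---bounding the trace-squared coordinate generators via Fricke identities---\emph{is} the paper's method, and carrying it out as above finishes the proof in a few lines without ever leaving the character variety.
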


\begin{corollary}\Label{CloseAnalyticSet}
The image of  $B_\ell$  is a closed analytic set in $\rchi \times \rchi$.
\end{corollary}
\begin{remark}
By Theorem \ref{BendingTeich},  the properness of the complex bending map $B_M$ fails if $M$ contains a loop $\ell$ of weight $\pi$ modulo $2\pi$.
In this case, $\Im B_M$ is not closed:  for every representation $\rho\col \pi_1(S) \to \PSL_2\C \times \PSL_2\C$ in $\Im B_M$, the holonomy of $\xi(\ell)$ is non-parabolic in both factars. This non-parabolic property fails for some representations in the closure of $\Im B_M$, as we have seen in the proof of \Cref{NonpropernessOfBendingTeichmullerSpaces}.

On the other hand, it is still plausible that the image of $B_M$ is a closed analytic subset of $\rchi \times \rchi$ for every weighted multiloop $M$ on $S$ as long as the weight of each loop is not equal to $\pi$ modulo $2\pi$. 
\end{remark}
   
Pick $\theta  \in (0, \pi)$.
Let
$$E_\theta =  \{(\gam, e) \in \PSL_2\C \times \PSL_2\C  \mid e \text{ is elliptic of rotation angle $\theta$}   \}.$$
Clearly, for every $(\gam, e) \in \E_\theta$, $\tr^2 e$ is a fixed constant in $(0, 4)$ only depending on $\theta$. 
Thus $E_\theta$ is a smooth affine algebraic variety. 
Then $\PSL_2\C$ acts on $\E_\theta$ by conjugating both parameters $\gam$ and $e$ simultaneously. 
Let $\mathcal{E}_\theta$ be the GIT-quotient $E_\theta \sslash \PSL_2\C$.
Then $\mathcal{E}_\theta$ is an affine algebraic variety.
Then the following holds.
\begin{lemma}\Label{RankTwoFreeGroup}
The analytic mapping $E_\theta \sslash \PSL_2\C \to \C^2$ defined by $\phi\col (\gam, e) \mapsto (\tr^2 \gam, \tr^2 \gam e)$ is a proper mapping.
\end{lemma}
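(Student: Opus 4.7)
The plan is to reduce the properness of $\phi$ to a linear algebra computation by explicitly parameterizing the Stein quotient $\E_\theta\sslash\PSL_2\C$. Using $\PSL_2\C$-conjugation, a representative of any orbit can be normalized so that $e$ is represented by $\mathrm{diag}(\lambda, \lambda^{-1})$ with $\lambda = e^{i\theta/2}$. The residual symmetry then consists of conjugation by the centralizer $C = \{\mathrm{diag}(\mu,\mu^{-1}) : \mu \in \C^*\} \cong \C^*$ of $e$, together with the $\Z/2$ ambiguity $\gamma\mapsto -\gamma$ coming from the choice of $\SL_2\C$-lift. Writing an $\SL_2\C$-lift as $\gamma = \begin{pmatrix} a & b \\ c & d \end{pmatrix}$ with $ad - bc = 1$, the $C$-action is $(a,b,c,d)\mapsto (a, \mu^2 b, \mu^{-2} c, d)$, so the ring of $C$-invariants is $\C[a, d, bc]$; the determinant relation $bc = ad - 1$ eliminates $bc$, yielding $\C[a, d]$, and the Stein categorical-quotient theory of \cite{Snow82} identifies $\E_\theta\sslash \PSL_2\C \cong \C^2 /\{\pm 1\}$.

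Next I would compute the traces in these coordinates:
\[
\tr \gam = a + d, \qquad \tr(\gam e) = \lambda a + \lambda^{-1} d.
\]
Since $\theta \in (0,\pi)$ gives $\lambda \neq \lambda^{-1}$, the two linear forms in $(a,d)$ are independent, so the map $(a,d) \mapsto (\tr\gam, \tr(\gam e))$ is a linear isomorphism $\C^2\to\C^2$. Therefore, modulo the $\Z/2$-involution, $\phi$ factors as this linear isomorphism composed with coordinate-wise squaring $\C^2 \to \C^2$, both of which are finite proper maps. Concretely: if $\phi(\gam_i, e_i)$ is bounded, then $\tr^2\gam_i$ and $\tr^2(\gam_i e_i)$ are bounded, so after passing to a subsequence fixing the two sign ambiguities the quantities $\tr\gam_i = a_i + d_i$ and $\tr(\gam_i e_i) = \lambda a_i + \lambda^{-1} d_i$ are bounded; invertibility of the linear system then forces $a_i, d_i$ to be bounded, and hence the class in $\C^2/\{\pm 1\}$ admits a convergent subsequence.

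The main obstacle is the invariant-theoretic identification of $\E_\theta\sslash\PSL_2\C$ with $\C^2/\{\pm 1\}$: because $C \cong \C^*$ is noncompact, distinct $C$-orbits (e.g.\ those with $b\neq 0, c=0$ versus $b=0, c\neq 0$) are identified in the Stein quotient, so the argument cannot proceed set-theoretically and must appeal to the machinery of Stein categorical quotients for reductive groups from \cite{Snow82} to confirm that $\C[a,d]$ really is the coordinate ring of the quotient. An alternative route is to invoke the Fricke--Vogt--Magnus isomorphism of the $\SL_2\C$-character variety of the rank-two free group with $\C^3$, restrict to the slice $\tr^2 b = 4\cos^2(\theta/2)$, and pass to the $\PSL_2\C$-quotient by the finite lift ambiguity; once any such identification is granted, the remainder of the argument is elementary linear algebra in the chosen normalization.
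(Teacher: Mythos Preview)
Your argument is correct. Both your proof and the paper's reduce to the same linear-algebraic fact---that $\tr\gam$ and $\tr(\gam e)$ are independent coordinates on the slice where $e$ has fixed trace---but the packaging differs. The paper quotes the Fricke--Vogt isomorphism $\SL_2\C \times \SL_2\C \sslash \SL_2\C \cong \C^3$ via $(\gam,e)\mapsto(\tr\gam,\tr e,\tr\gam e)$, lifts a diverging sequence in $\E_\theta\sslash\PSL_2\C$ to the $\SL_2$-level, restricts to the hyperplane $\tr e = 2\cos(\theta/2)$, and concludes that one of $\tr\ti\gam_i$, $\tr(\ti\gam_i\ti e_i)$ must diverge; it also records the trace identity $\tr(\gam e)+\tr(\gam e^{-1})=\tr\gam\cdot\tr e$ with $\tr e\neq 0$. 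You instead normalize $e$ once and for all, compute the quotient by the residual torus directly to obtain $\C^2/\{\pm1\}$ with coordinates $(a,d)$, and then observe that $(a,d)\mapsto(a+d,\lambda a+\lambda^{-1}d)$ is a linear isomorphism because $\lambda\neq\lambda^{-1}$, so that $\phi$ is a linear isomorphism followed by coordinatewise squaring.

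Your route is more explicit and self-contained, while the paper's is shorter because the invariant theory is outsourced to the Fricke--Vogt citation. Your worry about the Stein-quotient step is not a genuine obstacle here: since $\theta\in(0,\pi)$ gives $4\cos^2(\theta/2)\in(0,4)$, the locus $\{\tr^2 e = 4\cos^2(\theta/2)\}$ in $\PSL_2\C$ is exactly the single semisimple conjugacy class of $e_0$, so $\E_\theta$ is a Zariski-closed affine subvariety of $\PSL_2\C\times\PSL_2\C$ and ordinary GIT for the reductive group $\PSL_2\C$ applies; your computation of the invariant ring as $\C[a,d]^{\Z/2}$ is then straightforward.
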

\begin{proof}
The map
$\SL_2\C \times \SL_2\C \sslash \SL_2\C \to \C^2$ given by $(\gam, e) \mapsto ( \tr \gam, \tr e, \tr \gam e)$ is a biholomorphic map (see for example,  \cite{Goldman09FrickeSpaces}). 

Let $(\alpha_i, e_i)$ be a sequence in $\E_\theta \sub \PSL_2\C \times \PSL_2\C \sslash \PSL_2\C$ which leaves every compact as $i \to \infty$. 
Pick any lift  $(\ti\alpha_i, \ti{e}_i) \in \SL_2\C \times \SL_2\C \sslash \SL_2\C$ of $(\alpha_i, e_i)$ for each $i$.
Then $(\ti\alpha_i, \ti{e}_i)$ also leaves every compact set as $i \to \infty$. 

By a basic trace identity,  we have  $\tr \ti\alpha_i \ti{e}_i + \tr \ti\alpha_i \ti{e}^{-1}_i = \tr \ti\alpha_i \tr \ti{e}_i.$
Therefore, since $\tr \ti{e}_i$ is a fixed non-zero constant, up to a subsequence, either $\tr \ti\alpha_i$ or $\tr \ti\alpha_i \ti{e}_i$ diverges to $\infty$ as $i \to \infty$. 
Thus the image $\phi (\alpha_i, e_i)$ leaves every compact in $\C^2$ as $i \to \infty$.
\end{proof}

Since $\ell$ is non-separating, we can
pick a generating set $\{\gam_1, \dots, \gam_{2g}\}$ of $\pi_1(S)$ such that $\gam_1, \dots, \gam_{2g}$ correspond to loops on $S$ intersecting $\ell$ exactly once.
Let $\eta_i  = [\rho_i, (u_i, v_i)] \in X_\ell$ be a sequence which leaves every compact in $X_\ell$. 

Let $w(\ell)$ denote the weight of $\ell$, and let $e_i \in \PSL_2\C$ be the elliptic element by angle $w(\ell)$ along the geodesic from $u_i$ to $v_i$. 
Then we can normalize $(\rho_i, (u_i, v_i))$, by an element of $\PSL_2\C$,  so that $e_i \in \PSL_2\C$ is independent of $i$;
let $e$ denote this elliptic element in $\PSL_2\C$.

As $\eta_i$ leaves every compact and $\gam_1, \dots, \gam_n $ form a generating set of $\pi_1(S)$, then there is $k \in \{1, \dots, n\}$ such that, up to a subsequence, $\rho_i (\gam_k)$ leaves every compact subset as $i \to \infty$. 

Then, since $\gam_k$ intersects $\ell$ exactly at once,  
by the properness of Lemma \ref{RankTwoFreeGroup}, the image $B_\ell (\eta_i) \gam_k$ also leaves every compact set as $i \to \infty$. 
(The hypothesis of the weight  of $\ell$ not being $\pi$ corresponding to the rotation angle of $e$ not being $\pi$ in Lemma \ref{RankTwoFreeGroup}.)
This immediately implies the properness of $B_\ell$, and completes the proof of Theorem \ref{ProperCase}.


\section{Symplectic property}\Label{sSymplectic}
In this section, we prove the symplectic property of the bending maps. 
Complex Fenchel-Nielsen coordinates on the quasi-Fuchsian space are introduced by \cite{Kourouniotis41ComplexLengthCoordinatesQuasiFuchsian} and \cite{Tan94ComplexFenchelNielsenForQuasiFuchsian}, and the coordinates holomorphically extend to most part of the character variety $\rchi$.
We explicitly explain the subset of $\rchi$ where the complex Fenchel-Nielsen coordinates are defined.

Let $M$ be a maximal multiloop on $S$. 
Then $M$ contains $3g - 3$ loops, where $g$ is the genus of $S$.  
 Let $\rchi_M^h$ be the  (Euclidean) open full-measure subset of $\rchi$ consisting of $\rho\col \pi_1(S) \to \PSL_2\C$ such that
 \begin{itemize}
 \item all loops of $M$ are hyperbolic, and
 \item for each component $P$ of $S \minus M$, the restriction of $\rho$ to $\pi_1(P)$ is irreducible. 
\end{itemize}
 
 Pick (real) Fenchel-Nielsen coordinates on the Teichmüller-Fricke space $\TT$ with respect to $M$ (see \cite{FarbMarglit12} for example). 
 Let $\C_{+} = \{ z \in \C  \mid {\rm Re}\, z > 0 \}.$
For each $\rho\col \pi_1(S) \to \PSL_2\C$ in $\rchi_M^h$,  
 let $\ell_i \in \C_+ / 2 \pi I \Z$ be the complex translation length of $\rho(m_i)$:
When we  $\ell_i = x_i + I y_i $ in real and imaginary coordinates, $x_i \in \R_{> 0}$ is the (real) translation length and the $y_i \in \R$ is the rotation angle of the screw motion of the hyperbolic element $\rho(m_i)$. 

Clearly, for real representations $\pi_1(S) \to \PSL_2\R$, their length parameters $\ell_1, \dots, \ell_{3g -3}$ are all real numbers. 
 Let $\tau_i \in \C / 2\pi I\Z$ be the twist coordinate along  $\ell_i$ which complexifies the Fenchel-Nielsen twist coordinate, so that the imaginary direction is the direction of bending deformation (where $I$ denotes the imaginary unit). 
Similarly, for real representations $\pi_1(S) \to \PSL_2\R$, their twist parameters $\tau_1, \dots, \tau_{3g-3}$ are all real numbers. 
 \begin{lemma}\Label{ComplexFenchelNiesen} 
 $\rchi_M^h$ is a (Zariski) open dense subset of $\rchi$ and biholomorphic to $(\C_+ / 2 \pi I \Z)^{3g - 3} \bigoplus (\C/ 2\pi I \Z)^{3g - 3}$ by $(\ell_1, \ell_2, \dots, \ell_{3g-3}, \linebreak[2] \tau_1, \tau_2, \dots, \tau_{3g-3})$.
\end{lemma}
\begin{proof}
The mapping $\rchi_M^h \to (\C_+ / 2 \pi I \Z)^{3g - 3} \bigoplus (\C/ 2\pi I \Z)^{3g - 3}$ is a holomorphic mapping, as the coordinates are given by the traces of loops. 

Given a pair of pants $P$, the irreducible representations $\pi_1(P)$ are algebraically parametrized by the holonomy traces of the three boundary components of $P$ (\cite{Vogt1889} \cite{Fricke1896}; see also \cite{Goldman09FrickeSpaces}). 
Now let $P$ be a component of $S \minus M$. 
Then  $\rho \in \rchi_M^h$, the $\rho \vert \pi_1(P)$ is parametrized by the complex length coordinates of the boundary components of $P$. 

For a loop $m_i$ of $M$, let $F$ be the component of $S \minus (M \minus m_i)$ which contains $M$. 
Then the representation on $\pi_1(F) \to \PSL_2\C$ is determined by the twisting parameter $\tau_i$ of $m_i$, and the length parameters $\ell_i$ of $m_i$ and the boundary loops of $F$. 

    We can take a finite generating set of $\pi_1(S)$ corresponding to loops in $M$ and loops intersecting only one loop of $M$.
    Then, the parameters  $(\ell_1, \ell_2, \dots, \ell_{3g-3}, \linebreak[2] \tau_1, \tau_2, \dots, \tau_{3g-3})$ determine a representation $\pi_1(S) \to \PSL_2\C$ up to conjuegation, since the image of the generating set are determined by the arugment above. 
   (See  \cite{Tan94ComplexFenchelNielsenForQuasiFuchsian, Kourouniotis41ComplexLengthCoordinatesQuasiFuchsian} for complex Fenchel-Nielsen coordinates.)
We see that the mapping is biholomorphic. 
\end{proof}

Due to Platis \cite{Platis01ComplexSymplecticGeometryOfQuasi-FuchsianSpace} and Goldman \cite{Goldman04}, the complex Fenchel-Nielsen coordinates yield Darboux coordinates for Goldman's complex symplectic structure.  
 $$w_G = \Sigma_{i = 1}^{3g-3} d \ell_{m_i}^\C \wedge d t_{m_i}^\C.$$ 
(see Loustau \cite{Loustau15ComplexSymplecticGeometry} for details).
To be concrete and self-contained, we first explain the Darboux coordinates on $\rchi_M^h$. 
\begin{lemma}\Label{Darboux}
Let $M = m_1 \sqcup m_2 \sqcup \dots \sqcup m_{3g-3}$ be a maximal multiloop on $S$. 
Then $w_G = \Sigma_{i = 1}^{3g-3} d \ell_{m_i}^\C \wedge d t_{m_i}^\C$ on $\rchi_M^h$. 
\end{lemma}

\begin{proof}
The symplectic structure $w_G$ is a complex symplectic structure, so that the 2-form changes holomorphically in $\rchi$. 
On the Fricke-Teichmüller space space, $w_G | \TT$ is given by $\Sigma d \ell_{m_i}^\R \wedge d t_{m_i}^\R$.
Therefore, since the complex Fenchel-Nielsen coordinates are holomorphic coordinates (\Cref{ComplexFenchelNiesen}),  $w_G = \Sigma d \ell_{m_i}^\C \wedge d t_{m_i}^\C$ on $\rchi_M^h$. 
\end{proof}
Then these Darboux coordinates on $\rchi_M^h$ give the symplectic property of the real bending map. 
\begin{proposition}\Label{MultiloopBendingIsSymplectic}
If $M$ is a weighted multiloop on $S$, then $b_M\col \TT \to \rchi$ is a symplectic embedding. 
\end{proposition}
\begin{proof}
As $M$ may {\it not} be maximal, we pick a maximal multiloop $M'$ on $S$ containing $M$. 
Set $m_1, m_2, \dots, m_{3g-3}$ to be the loops of $M'$.
Let $w_1, w_2, \dots w_{3g-3} \in \R_{\geq 0}$ be the weight of the loops of $M'$ (so that, if $\ell_i$ is {\it not} a loop of the original multiloop $M$, then $w_i = 0$).
The Teichmüller-Fricke space $\TT$ is a component of the real slice of $\rchi_M^h$.
In the Darboux coordinates of \Cref{Darboux}, the real bending map $b_M\col \TT \to \rchi$ extends to $\hat{b}_M \col \rchi_M^h \to \rchi_M^h$ by the translation 
$$(\ell_1, \dots, \ell_{3g-3}, \tau_1, \dots, \tau_{3g-3}) \mapsto (\ell_1, \dots, \ell_{3g-3}, \tau_1 + w_1 I, \dots, \tau_{3g-3} + w_{3g-3} I).$$ 
As it is a translation in the Darboux coordinates, $b_M\col \TT \to \rchi$ is clearly a symplectic embedding. 
\end{proof} 

By the limiting argument, all real bending maps are symplectic. 
\begin{theorem}\Label{RealSymplectic}
For every $L \in \ML$, $b_L\col \TT \to \rchi$ is a symplectic embedding w.r.t. Goldman's symplectic structure.
\end{theorem}

\begin{proof}
Let $\ell_i$ be a sequence of weighted loops which converges to $L$ in $\ML$ as $i \to \infty$. 
(Recall that $b_{\ell_i}\col \TT \to \rchi$ is a real-analytic embedding.)
For each $\tau \in \TT$,
the tangent space of $b_{\ell_i}$ at $\tau$ converges to the tangent space of $b_L$ at $\tau$.
By \Cref{MultiloopBendingIsSymplectic},  $b_{\ell_i}\col \TT \to \rchi$ is a symplectic embedding for each $i = 1, 2, \dots$. 
Therefore, by the continuity of the symplectic structure $w_G$,  the limit $b_L$ is also symplectic at  $\tau$. 
\end{proof}

\subsection{Symplectic property for complex bending map}
As  $X_M \minus X_M^p \to \rchi \minus \rchi_M^p$ is an analytic covering map,   $X_M \minus X_M^p$ has a pull-back symplectic structure.

A representation $\rho\col \pi_1(S) \to \PSL_2\C$ is {\sf reductive}, if the Zariski-closure of the image $\Im \rho \sub \PSL_2\C$ is reductive. 
(That is, the maximal normal unipotent subgroup of  $\Im \rho$ is the trivial group.)
Then a representation $\rho\col \pi_1(S) \to \PSL_2\C$ is non-reductive, if and only if $\Im \rho$ is conjugate to a subgroup consisting of upper triangular matrices which contains at least one (non-identity) parabolic element. 
Let $X^r_M$ be the set of framed representations $\eta = [\rho, (u_i, v_i)]$ of $X_M$ such that $\rho$ is a reductive representation other than the trivial representation. 
\begin{theorem}\Label{Symp}
The restriction of $B_M$ to $X_M^r \minus X_M^p$ is a complex symplectic map.  
\end{theorem}
\begin{proof}
We show that the restriction of $b_M^{\pm}\col X^r_M \to \rchi$ is symplectic on $\rchi_M^h$.
  For every framed representation in  $R_M^r$, its $\PSL_2\C$-orbit is a closed subset of $R_M$ and biholomorphic to $\PSL_2\C$.
Therefore, the reductive part $X_M^r$ is contained in the smooth part of the framed character variety $X_M$. 

Recall that $\rchi_M^h$ is the subset of character variety $\rchi$ consisting of $\pi_1( S) \to \PSL_2\C$ such that every loop of $M$ maps to a hyperbolic element by $\rho$ and for every component $F$ of $S \minus M$,   the restriction of $\rho$ to the fundamental group of $F$ is irreducible. 

 Let $X_M^h$ denote the full-measure subset of $X_M^r$ consisting of framed representations whose representations are in $\rchi_M^h$. 
Then $X_M^h$ is a (Euclidean) open dense full-measure subset of $X_M$. 
The complex bending map $B_M$ is symplectic on $X_M^h$ by \Cref{Darboux}.
Therefore,  by continuity, $B_M$ is symplectic on  $X_M^r \minus X_M^p$.
  \end{proof}

\begin{figure}

\begin{tikzpicture}[
  every node/.style={inner sep=5pt},
  arrow/.style={->},
    hookarrow/.style={{Hooks[right]}->},
]

\node (X) at (0, 0) {$X_\ell \minus X_\ell^p$};
\node (ccp) at (2.5, 0) {$\rchi \minus \rchi^p $};
\node (c) at (3.6, 0) {$\subset \rchi$};
\node (cc) at (2.5, -1.5) {$\rchi \times \rchi$};

\draw[arrow] (X) -- (ccp) node[midway, above] {\tiny $b_M$};
\draw[hookarrow ] (X) -- (cc) node[midway, left] {\tiny $B_M$};
\draw[hookarrow] (ccp) -- (cc) node[midway, right] {\tiny $b_M^- \times b_M^+$};
\end{tikzpicture}

 \caption{A local commutative diagram describing the complexification of the real bending map.}\label{fSymplecticEmbedding}
\end{figure}


\section{The general complex bending variety}\Label{sComplexBendingVariety}
Let $L$ be a non-empty measured lamination on $S$. 
Let $\ell_i$ be a sequence of non-separating weighted oriented loops on $S$ converging to $L$ as $i \to \infty$. 
By \Cref{CloseAnalyticSet}, the image of $B_{\ell_i} \col X_{\ell_i} \to \rchi \times \rchi$ is a closed complex-analytic subset of $\rchi \times \rchi$. 

\begin{theorem}\Label{ComplexBendingVaraiety}
The analytic set $\Im B_{\ell_i}$ converges,  up to a subsequence,  to a closed complex-analytic subset of $\rchi \times \rchi$ as $i \to \infty$. 
\end{theorem}

\proof
By \Cref{Symp}, the bending maps  $b_{\ell_i}^{\pm}\col X_{\ell_i} \to \rchi$ is a complex symplectic mapping on $X^r_{\ell_i} \minus X_M^p \to \rchi$.
 \begin{claim}
 Let $\ell$ be an essential simple closed curve with weight $w$ not equal to $\pi$ modulo $2\pi$. 
 Then 
 $b_\ell^{\pm}\col X_\ell \to \rchi$ is two-to-one mapping on $X^r_{\ell_i} \minus X^p_{\ell_i}$.  
\end{claim}
 \begin{proof}
 Let $\rho\col \pi_1(S) \to \PSL_2\C$ be a representation in $\rchi_{\ell}^r \minus \rchi_\ell^p$. 
 Let $\alpha \in \pi_1(S)$ be an element representing $\ell$. 
  As $\rho(\alpha)$ is {\it not} a parabolic element or the identity, pick a framing $(u, v)$ of $\ell$, where $u, v$ are the fixed points of $\rho(\alpha)$.
 
 Since $b^+_\ell$ and $b^-_\ell$ bend each representation in opposite directions, they are inverse to each other, when the framing is kept:
 Namely  $b^+_\ell$ takes  $b^-(\rho, (u, v)) \in \rchi$ with the framing $(u, v)$ back to $(\rho, (u, v)) \in X_\ell$. 
  Similarly $b^-_\ell$ takes  $b^+(\rho, (u, v)) \in \rchi$ with the framing $(u, v)$ back to $(\rho, (u, v)) \in X_\ell$. 
 Therefore the inverse image $(b^+_\ell)^{-1} (\rho)$ consists of $( b^-(\rho, (u, v)), (u, v) )$ and $( b^+(\rho, (u, v)), (v, u) )$. 
Moreover, the above inverse relation of $b^+_\ell$ and $b^-_\ell$ implies that there are no other framed representations mapping to $\rho$ by $b^+_\ell$.
Hence $b^+_\ell$ is a two-to-one mapping on  $X^r_{\ell_i} \minus X^p_{\ell_i}$.

 One can similarly prove that $b^-_\ell$ is a two-to-one mapping on  $X^r_{\ell_i} \minus X^p_{\ell_i}$.
\end{proof}

Let $\omega$ denote the Goldman symplectic structure on $\chi$.     
Then the complex volume form of $\chi$ is given by $\omega^n$.
With respect to the Darboux coordinates  $d z_1 \wedge d z_2 \wedge \dots \wedge d z_n$.
As $B_{\ell_i}$ is complex symplectic almost everywhere,  
it preserves the complex volume (i.e. Jacobian is one).
Therefore, since $B_{\ell_i}$ is a two-to-one mapping almost everywhere (see \S\ref{sForgetfulMap}),   the volume of the analytic set $\Im B_{\ell_i}$ is locally finite in $\rchi \times \rchi$ and uniformly bounded in $i$.
Hence, up to a subsequence, the closed $\C$-analytic set $\Im B_{\ell_i}$ converges to a closed $\C$-analytic set in $\rchi \times \rchi$ as $i \to \infty$ by Bishop's theorem \cite[Corollary in p205]{Chirka89ComplexAnalyticSets}). 
\Qed{ComplexBendingVaraiety}

   \begin{remark}
Since $\Im B_{\ell_i}$ is symplectic in the smooth part, the closed $\C$-analytic set in the limit is also $\C$-symplectic in the smooth part. 
 \end{remark}
 
Let $\QF$ be the quasi-Fuchsian space, which contains the Fricke space $\TT$. 
Then,  the domain $X_{\ell_i}$ of $B_{\ell_i}$ contains $\QF$ for all $i = 1, 2, \dots$.

Let $\QF_i$ be the open subset of $X_{\ell_i}$ so that the restriction of $b^+_{\ell_i}$ to the Fuchsian space $\TT$ in $\QF_i$ is the real bending map $b_{\ell_i}$.
Moreover,  $L$ is realizable for all quasi-Fuchsian representations, i.e. there is a $\rho$-equivariant pleated surface $\ti{S} \to \H^3$ whose pleating lamination contains the geodesic lamination supporting $L$. 
Therefore the $\R$-analytic bending map $b_L\col \TT \to \rchi$ extends to a holomorphic mapping $b_L\col \QF \to \rchi$.
Similarly to the complex bending map $B_M\col X_M \to \rchi \times \rchi$ for a weighted multiloop,  we can define $B_L\col \QF \to \rchi \times \rchi$ by bending $\rho \col \QF \to \rchi \times \rchi$ by $L$ and by $-L$, 
$$\rho \mapsto (b_L (\rho), b_{-L}(\rho)).$$
Then $B_L$ complex analytically embeds $\QF$ into $\rchi \times \rchi$. 
Therefore $B_{\ell_i} | \QF_i$ converges to $B_L \vert \QF$ as $i \to \infty$.
The limit of \Cref{ComplexBendingVaraiety} has the canonical irreducible component $\BB_L$ containing $B_L(\QF)$.

Indeed this component is independent of the choices we made, since otherwise there are distinct irreducible $\C$-analytic sets containing $B_L(\QF)$ against the uniqueness of the decomposition of complex analytic sets into irreducible ones. 
\begin{corollary}
The irreducible component $\BB_L$ containing the real bending map image $\psi \circ b_L(\TT)$ is independent of the choice of the sequence $\ell_i$ converging to $L$ and the subsequence in \Cref{ComplexBendingVaraiety}. 
\end{corollary}

We obtained a unique irreducible closed complex-analytic set $\BB_L$ in $\rchi \times \rchi$ containing the real-analytic subvariety $\psi \circ b_L (\TT)$, and it is symplectic on the smooth part.
We finished the proof of \Cref{GeneralComplexBendingVarietiy}.

For a general measured lamination $L$, 
the complex bending variety $\BB_L$ is constructed as the limit as above.
Since $L$ is realizable for all quasi-Fuchsian representations, 
$\BB_L$ contains quasi-Fuchsian representations bent in opposite directions along $L$ analogous to the complex bending map along a weighted multiloop defined in \S \ref{sBendingFramedRepresentataions}.
 We can more generally hope that generic representations in $\BB_L$ have similar properties.

\begin{question}
Let $L$ be a measured lamination on $S$ (containing a non-periodic leaf). 
Let $\mathcal{B}_L$ be the complex bending variety of $L$ in $\rchi \times \rchi$. 
Let $\eta = (\rho_1, \rho_2)$ be a generic point in $\BB_L$.
Are there a $\rho_1$-equivariant pleated surface $\beta_1 \col \H^2 \to \H^3$ and a $\rho_2$-equivariant pleated surface $\beta_2 \col \H^2 \to \H^3$ both realizing $|L|$, such that $\rho_2$ is obtained by bending $\rho_1$ along the geodesic lamination $|L|$?\end{question}

\bibliography{Bending_Teichmuller_spaces_and_character_varieties_2026_5_17arXiv}

\bibliographystyle{alpha}

\end{document}